\documentclass[12pt]{amsart}
\usepackage[colorlinks=true, pdfstartview=FitV, linkcolor=blue, citecolor=green, urlcolor=black,filecolor=magenta]{hyperref}

\usepackage{graphicx, verbatim,amsmath,amssymb}

\newcommand{\aaa}{{\mathcal A}}

\newcommand{\eee}{{\mathcal E}}

\newcommand{\wpe}{weakly pattern equivariant}
\newcommand{\spe}{strongly pattern equivariant}

\newcommand{\R}{{\mathbb R}}

\newcommand{\N}{{\mathbb N}}

\newcommand{\Z}{{\mathbb Z}}
\newcommand{\C}{{\mathbb C}}


\newcommand{\sP}{strongly pattern equivariant}
\newcommand{\wP}{weakly pattern equivariant}
\newcommand{\s}{{\mathfrak s}}
\newcommand{\ns}{nonslip}
\newcommand{\brillig}{rigid}
\newcommand{\pim}{\pi_{max}}
\newcommand{\Rmax}{{\mathcal R}_{max}}
\newcommand{\sm}{A_\sigma}

\newcommand{\m}{\mbox{\Large $\curlywedge$}}
\newcommand{\mo}{M}
\newcommand{\V}{\mathcal V}
\newcommand{\F}{\mathcal F}

\newtheorem{thm}{Theorem}[section]
\newtheorem*{thm*}{Theorem}
\newtheorem{cor}[thm]{Corollary}
\newtheorem{lem}[thm]{Lemma}
\newtheorem{prop}[thm]{Proposition}
\newtheorem{dfn}[thm]{Definition}

\everymath{\displaystyle}

\theoremstyle{definition}

\newtheorem{remarks}{Remark}

\setlength{\textwidth}{6.5in}
\setlength{\textheight}{9in}
\setlength{\oddsidemargin}{0cm}
\setlength{\evensidemargin}{0cm}
\setlength{\topmargin}{0cm}
 
\begin{document}
\title{Conjugacies of Model Sets}
\author{Johannes Kellendonk and  Lorenzo Sadun}
\address{Johannes Kellendonk\\ Universit\'e de Lyon,
Universit\'e Claude Bernard Lyon 1\\
Institute Camille Jordan, CNRS UMR 5208\\  69622 Villeurbanne, France}\email{kellendonk@math.univ-lyon1.fr}
\address{Lorenzo Sadun\\Department of Mathematics\\The University of
 Texas at Austin\\ Austin, TX 78712 USA} 
\email{sadun@math.utexas.edu}
\thanks{
The work of the first author is partially supported by the ANR SubTile
NT09 564112. 
The work of the second author is partially supported by NSF
  grant  DMS-1101326} 
\date{January 13, 2015}

\keywords{Tilings, Dynamical Systems} 

\subjclass{37B50, 52C22}

\begin{abstract}
  Let $M$ be a model set meeting two simple conditions: (1) the
  internal group $H$ is $\R^n$ (or a product of $\R^n$ and a finite
  group) and (2) the window $W$ is a finite union of disjoint
  polyhedra.  Then any Delone set with finite local complexity (FLC)
  that is topologically conjugate to $M$ is mutually locally
  derivable (MLD) to a model set $M'$ that has the same internal group
  and window as $M$, but has a different projection from $H \times
  \R^d$ to $\R^d$.  In cohomological terms, this means that the group
  $H^1_{an}(M,\R)$ of asymptotically negligible classes has dimension $n$.
  We also exhibit a
  counterexample when the second hypothesis is removed, constructing
  two topologically conjugate FLC Delone sets, one a model set and the
  other not even a Meyer set.
\end{abstract}

\maketitle

\setlength{\baselineskip}{.6cm}

\section{Introduction and statement of results.}

A substantial part of the analysis of Delone sets (or tilings) is
based on the study of their associated dynamical systems.
This includes characterizing certain classes of
Delone sets by ergodic and topological properties of their dynamical systems. 
Two Delone sets
  $\Lambda$ and $\Lambda'$ are called {\em topologically conjugate\/}
  whenever their associated dynamical systems are topologically
  conjugate. We call them {\em pointed topologically conjugate\/} if
  the conjugacy maps $\Lambda$ to $\Lambda'$.  We consider which properties of 
finite local complexity (FLC)
  Delone sets are preserved under topological conjugacy or pointed
  topological conjugacy.

In this paper we concentrate on Meyer sets, and specifically on model sets.
Recall that a Meyer set is a Delone set $\Lambda$ such that the set
of difference vectors $\Lambda-\Lambda$ is uniformly discrete, and
that model sets are Meyer sets arising from a particular
cut and project construction. (See Section \ref{sec-model} for precise definitions.) 
This construction involves a locally compact abelian group $H$, a lattice
$\Gamma\subset H\times\R^d$ and a strip $S=W\times \R^d$ where
$W\subset H$, the so-called window, is a compact subset that is the
closure of its interior.  The projection set for these data is the set
of points arising by projecting the points of $S\cap \Gamma$ onto
$\R^d$ along $H$.  We say that a Delone set $\mo'$ is a {\em reprojection\/} 
of a model set
$\mo$ if it arises from the same setup, except that the projection of
$S\cap\Gamma$ onto $\R^d$ is not along $H$, but is along a subgroup
$H' \subset
H\times \R^d$  that is isomorphic to $H$ and transverse to $\R^d$. 
(See Section \ref{sec-model} for more details.)

In \cite{KS13} we showed
that the Meyer property is {\em not\/} always 
preserved under topological conjugacy. In
view of this, we call a Meyer set $\Lambda$ {\em rigid\/}
if every Delone set topologically conjugate to $\Lambda$ is a
Meyer set. Our aim in this article is to study rigidity for
model sets.


We pay particular attention to {\em polyhedral\/} model sets, by which
we mean model sets satisfying two additional assumptions:
\begin{enumerate}
\item[H1.] The internal group $H$ is a vector space
$\R^n$.\footnote{This assumption can be relaxed somewhat. Everything
that we prove about polyhedral model sets also applies
when $H$ is the product of $\R^n$ with a finite group
$C$. A famous example is the Penrose tiling, with $H=\R^2 \times \Z_5$. 
In such a setting, the model set will be a finite union of disjoint polyhedral
model sets.}
\item[H2.] The window $W$ is a finite union of polyhedra. 
\end{enumerate}
Our main result states that such model sets are extremely rigid:

\begin{thm}\label{NewMainTheorem} 
If $\mo$ is a polyhedral model set 
and $\Lambda$ is a Delone set that is pointed topologically conjugate to
$\mo$, then $\Lambda$ is mutually locally derivable (MLD) to a 
reprojection of $\mo$.  
\end{thm}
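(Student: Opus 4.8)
The plan is to recast the statement as a computation of the asymptotically negligible cohomology $H^1_{an}(\mo,\R)$ and then to match that group against the parameters of a reprojection. First I would invoke the deformation theory of \flc{} point patterns together with the conjugacy results recalled above \cite{KS13}: an \flc{} Delone set that is pointed topologically conjugate to $\mo$ is MLD to a shape deformation $\mo_\alpha$ of $\mo$ whose deformation class is asymptotically negligible, i.e.\ lies in $H^1_{an}(\mo,\R)\otimes\R^d$. Here the word ``pointed'' is precisely what upgrades the topological conjugacy to an MLD equivalence carried by a based deformation. It therefore suffices to show that the asymptotically negligible classes are exactly those produced by reprojections, for then $\mo_\alpha$ is MLD to a reprojection $\mo'$, and hence so is $\Lambda$.

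For the easy inclusion I would compute the effect of a reprojection directly. Projecting $S\cap\Gamma$ along $H'=\{(h,g(h))\mid h\in H\}$ instead of along $H$ replaces the physical position $x$ of the lattice point $(h,x)$ by $x-g(h)$, so relative to $\mo$ every point is displaced by $-g(h)$, a quantity depending only on its internal coordinate $h\in W$. As $W$ is compact and $g$ is continuous, this displacement field is bounded and \wpe{}; being the coboundary of a bounded function, its associated cocycle stays bounded ($O(1)=o(|x|)$), which makes the class asymptotically negligible and the deformed pattern topologically conjugate to $\mo$. Since $\R^d$ is torsion free, $g$ annihilates the finite factor $C$ and reduces to a linear map $\R^n\to\R^d$, so the reprojections sweep out a copy of $\mathrm{Hom}(\R^n,\R^d)=\R^n\otimes\R^d$ inside $H^1_{an}(\mo,\R)\otimes\R^d$. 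In particular $\dim H^1_{an}(\mo,\R)\ge n$.

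The crux, and the step I expect to be hardest, is the reverse bound $\dim H^1_{an}(\mo,\R)\le n$, where both hypotheses enter. An asymptotically negligible class is represented by $df$ for a function $f$ that is weakly but not strongly pattern equivariant while $df$ is \spe{}. I would study such $f$ through the torus parametrization $\pi\colon\Omega_\mo\to\Torus=(H\times\R^d)/\Gamma$: the internal-address map is locally constant away from the acceptance boundaries, which are the $\Gamma$-translates of $\partial W$. Hypothesis H2 makes these boundaries a finite union of codimension-one polyhedral faces, and the requirement that $df$ be \spe{} — a locally determined variation across these finitely many flat faces — forces $f$ to depend affine-linearly on the internal $\R^n$-coordinate; hypothesis H1 removes the torsion coordinate from play because $\mathrm{Hom}(C,\R)=0$. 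Thus $f$ coincides, modulo a \spe{} coboundary that does not alter the MLD class, with $-g(h)$ for some linear $g\colon\R^n\to\R^d$, so $[\alpha]$ is a reprojection class.

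Combining the two bounds gives $\dim H^1_{an}(\mo,\R)=n$ and identifies every asymptotically negligible deformation of $\mo$ with a reprojection, whence $\Lambda$ is MLD to a reprojection $\mo'$ as claimed. I anticipate the only genuine difficulty to be the linearity argument of the previous paragraph: one must control how a sublinear, \spe{} deformation can accumulate across the acceptance boundaries, and it is exactly the flatness and finiteness supplied by H2 that forbid extra, ``exotic'' asymptotically negligible classes. Dropping H2 should reopen this room — a window whose boundary runs in incommensurate directions or is of fractal type would support deformations that are asymptotically negligible yet not reprojections — which is the mechanism behind the counterexample announced in the abstract.
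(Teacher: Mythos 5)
Your overall reduction coincides with the paper's: by Theorem \ref{thm-shape} (from \cite{KS13}) the pointed conjugacy is an MLD map followed by a shape conjugacy with generator $F$, a \wpe{} function with \spe{} coboundary, and reprojections do yield such generators (Corollary \ref{cor-repr}), so everything hinges on the converse that every such $F$ equals $L\circ\sigma_\mo$ plus a \spe{} function. But that converse is exactly where your proposal stops being a proof: the sentence claiming that \spe{}-ness of $df$ across ``finitely many flat faces'' forces $f$ to depend affine-linearly on the internal coordinate is the \emph{conclusion} of the paper's two hard theorems (Theorems \ref{PolyhedralTheorem} and \ref{mainlemma}), asserted without a mechanism. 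Two distinct things must be proved. First, that $F$ factors through the internal coordinate at all, up to \spe{} corrections --- i.e.\ that $\tilde F$ takes equal values on two transversal elements $M_1,M_2$ with $\pim(M_1)=\pim(M_2)$ that agree on a large ball (the \ns{} property). This cannot be taken for granted: it is precisely what fails in the Section \ref{examples} counterexample, where a \wpe{} function with \spe{} coboundary ``slips'' between elements of the same $\pim$-fiber. The paper proves it (Theorem \ref{PolyhedralTheorem}) by analyzing singular model sets: two elements of one fiber differ only on a finite affine hyperplane arrangement of uniformly bounded size (Theorem \ref{thm-sms}, Lemmas \ref{lem-codim} and \ref{bound-N}), and then a cone argument, summing $\delta F$ along a chain of points common to $M_1$ and $M_2$ (Proposition \ref{rel-dense}), shows the discrepancy $F_2(x)-F_1(x)$ lies in a fixed finite set and hence vanishes by uniform continuity of $\tilde F$. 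This is where H2 genuinely enters; your appeal to ``flatness and finiteness'' gestures at this but contains no argument.

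Second, even granted that $F$ descends to the window, one must show the resulting function is linear plus locally constant. It is a priori defined only sector-by-sector on acceptance domains; the paper (Theorem \ref{mainlemma}) shows the increments $v\mapsto f^\alpha(u+v)-f^\alpha(u)$ are locally constant (since $\delta F$ is \spe) and therefore constant on each convex sector (this is where convexity, i.e.\ H$2'$, is used), then needs repetitivity, return vectors and Corollary \ref{cor-W} to make the increments independent of the sector $\alpha$, and finally continuity plus additivity to produce the linear $L$ and the \spe{} remainder $\psi$. None of this cross-sector consistency work appears in your sketch, and without it one gets at best piecewise-affine dependence on the internal coordinate, not a single reprojection. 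So your strategy is the right one --- and the same as the paper's --- but the two core implications, asymptotically negligible $\Rightarrow$ \ns{} and \ns{} $\Rightarrow$ reprojection modulo \spe, are missing rather than replaced by an alternative argument.
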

Without hypothesis H2, this theorem is false.  In Section
\ref{examples} we exhibit a one dimensional model set $\mo$
satisfying H1 but not H2, and a Delone set $\Lambda$ that is pointed
topologically conjugate to $\mo$, with $\Lambda$ not being a
Meyer set, much less a model set or a reprojection of $\mo$.
\medskip

A first result in the direction of studying topological conjugacies
can be found in \cite{KS13}. It says that any pointed topological
conjugacy between repetitive FLC Delone sets is the composition of an
MLD transformation followed by a {\em shape conjugacy\/} (defined below)
which can be chosen arbitrarily close to the identity.  Given that MLD
transformations are well-understood, this reduces the task of
understanding topological conjugacies to a study of shape conjugacies.
Shape conjugacies modulo MLD transformations are parametrized (at
least infinitesimally) by a subgroup $H^1_{an}(\Lambda,\R^d)$
of the first tiling cohomology
with $\R^d$-coefficients $H^1(\Lambda,\R^d)$, called the {\em
  asymptotically negligible group\/} in \cite{CS2}.

If $\Lambda$ is a Meyer set, then within $H^1_{an}(\Lambda, \R^d)$
there is a subgroup of shape deformations that preserve a property closely
tied to the Meyer
property. We call these {\em \ns\/}, and denote the subgroup
$H^1_{ns}(\Lambda, \R^d)$. If $\Lambda$ is a model set, there is a
further subgroup, denoted $H^1_{repr}(\Lambda, \R^d)$, corresponding
to reprojections. One can similarly define $H^1_{an}(\Lambda, \R)$,
$H^1_{ns}(\Lambda, \R)$, and $H^1_{repr}(\Lambda, \R)$. (See Sections
\ref{sec-define} and \ref{sec-tove}, below.)  In cohomological terms,
Theorem \ref{NewMainTheorem} can be restated as follows:

\begin{thm} \label{MainTheorem2} If $\mo$ is a polyhedral model set 
  then $H^1_{repr}(\mo,\R)=H^1_{ns}(\mo, \R)=H^1_{an}(\mo, \R)$.
\end{thm}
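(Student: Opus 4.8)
The three groups sit in a chain of evident inclusions $H^1_{repr}(\mo,\R)\subseteq H^1_{ns}(\mo,\R)\subseteq H^1_{an}(\mo,\R)$: a reprojection of $\mo$ is again a model set and so preserves the Meyer property, whence its class is nonslip, and the nonslip classes are by definition those asymptotically negligible classes that preserve the Meyer property. It therefore suffices to prove the reverse containment $H^1_{an}(\mo,\R)\subseteq H^1_{repr}(\mo,\R)$, and since all three are finite-dimensional real vector spaces, this reduces to the dimension bound $\dim H^1_{an}(\mo,\R)\le\dim H^1_{repr}(\mo,\R)$. The plan is to compute $H^1_{repr}$ exactly and then to show that asymptotic negligibility, under hypotheses H1 and H2, leaves no room for anything beyond reprojections.

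First I would compute $H^1_{repr}(\mo,\R)$. A reprojection is determined by a continuous homomorphism $g\colon H\to\R$ (we argue with $\R$-coefficients, the $\R^d$ statement following by tensoring), and since $H=\R^n\times C$ with $C$ finite, every such $g$ annihilates $C$ and restricts to a linear functional on $\R^n$; hence $\mathrm{Hom}(H,\R)\cong\R^n$. The associated displacement field moves a point with internal coordinate $x^\star$ by $g(x^\star)$, which is bounded precisely because the window $W$ is compact—this is exactly why reprojections are asymptotically negligible—and a short argument shows the resulting map $\R^n\to H^1_{an}(\mo,\R)$ is injective. Thus $\dim H^1_{repr}(\mo,\R)=n$.

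The heart of the matter is the opposite bound $\dim H^1_{an}(\mo,\R)\le n$. I would represent a class in $H^1_{an}(\mo,\R)$ by a weakly pattern equivariant displacement field and use the asymptotically negligible (hence boundedness) condition to control it against the maximal equicontinuous factor: for a model set the only way a bounded pattern equivariant deformation can fail to be an MLD-coboundary is through its dependence on the internal coordinate, so the class is represented, up to coboundary, by a continuous function $F\colon W\to\R$ on the window. At this point hypothesis H2 enters decisively. Because $W$ is a finite union of polyhedra, the \tov\ construction of Section \ref{sec-tove} expresses the relevant part of the cohomology through the finitely many flat boundary faces of $W$ and their identifications under $\Gamma$; asymptotic negligibility forces $F$ to be compatible across all of these faces, and a linear-algebra analysis of the resulting face relations shows that the only continuous functions satisfying this constraint, modulo coboundaries, are the affine-linear ones, i.e. the restrictions to $W$ of homomorphisms $H\to\R$. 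This matches the surviving classes with $\mathrm{Hom}(H,\R)=H^1_{repr}(\mo,\R)$ and closes the chain of equalities.

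The main obstacle is this final step: proving that the polyhedral window admits no asymptotically negligible deformation beyond the affine ones. Everything hinges on H2, and without it the face-compatibility analysis collapses—one can arrange a continuous $F$ on a non-polyhedral window whose deformation is asymptotically negligible yet destroys the Meyer property, which is precisely the counterexample of Section \ref{examples}. I would therefore expect to spend most of the effort controlling the boundary contribution to $H^1_{an}(\mo,\R)$, showing that the flatness and finiteness of the polyhedral faces rigidify the window cohomology down to its linear part.
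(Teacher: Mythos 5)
Your overall strategy (establish the chain of inclusions $H^1_{repr}\subseteq H^1_{ns}\subseteq H^1_{an}$, compute $\dim H^1_{repr}(\mo,\R)=n$, then cap $\dim H^1_{an}(\mo,\R)$ by $n$) is legitimate in outline, and the computation $H^1_{repr}(\mo,\R)\cong \mathrm{Hom}(H,\R)\cong\R^n$ matches Proposition~\ref{prop-impi}. But the proposal has a genuine gap at its central step, and the gap sits exactly where the paper does all of its work. You assert that, by boundedness alone, ``the only way a bounded pattern equivariant deformation can fail to be an MLD-coboundary is through its dependence on the internal coordinate,'' so that any asymptotically negligible class is represented, up to coboundary, by a continuous function on the window $W$ --- and only \emph{after} this reduction do you invoke H2. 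That assertion is false for model sets satisfying only H1. Indeed, any function of the form $f\circ\sigma_M$ with $f$ continuous on $W$ is automatically \ns{} for every radius: if $\pim(M_1)=\pim(M_2)$ and $x\in M_1\cap M_2$, then $\sigma_{M_1}(x)=\sigma_{M_2}(x)$ by (\ref{eq-pistar}), and adding a \spe{} correction preserves the \ns{} property. Yet the counterexample of Section~\ref{examples} --- which you cite as supporting your plan --- is precisely a \wpe{} generator with \spe{} coboundary that is \emph{not} \ns{}, hence does \emph{not} factor through the window even up to \spe{} corrections. So that example refutes, rather than illustrates, your claim that the reduction to the window is automatic and that H2 enters only afterwards: H2 is needed to get onto the window in the first place. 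In the paper this is Theorem~\ref{PolyhedralTheorem} (asymptotically negligible $\Rightarrow$ \ns{} under H1 and H2), and it is proved not by softness but by the structure theory of singular model sets: two elements with the same image under $\pim$ differ only along a uniformly bounded finite arrangement of affine hyperplanes (Theorem~\ref{thm-sms}, Lemma~\ref{bound-N}), a cone argument (Lemma~\ref{prop-R_1}) confines the possible discrepancies $F_2(x)-F_1(x)$ to a fixed finite set, and uniform continuity of $\tilde F$ then forces them to vanish.

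The second half of your argument is also underpowered. Even granting the reduction to a function $f$ on $W$, your ``linear-algebra analysis of the resulting face relations'' is not an argument; the actual mechanism (Theorem~\ref{mainlemma}, which needs only the weaker H$2'$) is to decompose the acceptance domains of central patches into finitely many closed \emph{convex} sectors $W^\alpha$, show that the difference functions $u\mapsto f(u+v)-f(u)$ are locally constant and hence constant on the connected sets $W^\alpha\cap(W^\alpha-v)$, use return vectors and the density statement of Corollary~\ref{cor-acc} to make these constants independent of the sector, deduce additivity of $v\mapsto L(v)$ near $0$, and conclude that $L$ is linear with $F-L\circ\sigma_M$ \spe. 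Two smaller points: your dimension count presupposes that $H^1_{an}(\mo,\R)$ is finite-dimensional, which is not known a priori (it is a conclusion of the theorem, cf.\ Theorem~\ref{MainTheorem3}), so you must prove the containment or the bound directly, at which point the dimension framing buys nothing; and ``\ns{}'' is not \emph{defined} as ``preserves the Meyer property'' --- that characterization is itself a theorem (Theorem~\ref{MainTheorem4}, plus the corollary to Theorem~\ref{mainlemma}), so using it as the definition in your chain of inclusions risks circularity. The inclusion $H^1_{repr}\subseteq H^1_{ns}$ is instead proved by checking directly that the star map, hence $L\circ\sigma_M$, is \ns.
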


Another cohomological restatement is as follows: There is a natural map 
from the cohomology of the quotient space
$(H\times\R^d)/\Gamma$ to the cohomology of any model set $\mo$ constructed 
from the data 
$(\Gamma,H,\R^d)$ (with arbitrary window $W$). 
Let us denote its image by $H^1_{max}(\mo,\R)$ 
\begin{thm} \label{MainTheorem3} If $\mo$ is a polyhedral model set 
  then $H^1_{an}(\mo,\R)$ is $n$-dimensional and is contained 
in $H^1_{max}(\mo,\R)$.
\end{thm}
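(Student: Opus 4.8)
The plan is to deduce Theorem \ref{MainTheorem3} from Theorem \ref{MainTheorem2} by computing the reprojection group explicitly and locating it inside the torus cohomology. By Theorem \ref{MainTheorem2} we have $H^1_{an}(\mo,\R)=H^1_{repr}(\mo,\R)$, so it suffices to show that $H^1_{repr}(\mo,\R)$ is $n$-dimensional and is contained in $H^1_{max}(\mo,\R)$. Reprojections are parametrized by continuous homomorphisms $g:H\to\R^d$; since $H=\R^n\times C$ with $C$ a finite torsion group and $\R^d$ is torsion-free, every such $g$ annihilates $C$ and is therefore a linear map $\R^n\to\R^d$. Passing from $\R^d$- to $\R$-coefficients replaces $\mathrm{Hom}(\R^n,\R^d)\cong(\R^n)^*\otimes\R^d$ by $(\R^n)^*$, an $n$-dimensional space. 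I would package this as a linear map $\Phi:(\R^n)^*\to H^1_{an}(\mo,\R)$ sending a functional to the class of the shape deformation of the corresponding reprojection; by construction its image is exactly $H^1_{repr}(\mo,\R)$, so $\dim H^1_{repr}(\mo,\R)\le n$.

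The key is to realize $\Phi$ as the restriction of the natural map $H^1(\Torus,\R)\to H^1(\mo,\R)$ whose image defines $H^1_{max}(\mo,\R)$, where $\Torus=(H\times\R^d)/\Gamma$. A reprojection by $g$ shears $H\times\R^d$ along the graph $H'=\{(h,g(h))\}$, and infinitesimally this shear is a translation-invariant (constant) $1$-form on $\Torus$, namely $g$ read through the internal-coordinate factor. The canonical factor map from the hull of $\mo$ to $\Torus$ then pulls this constant form back to the pattern-equivariant $1$-form that implements the reprojection deformation. Hence every class in the image of $\Phi$ lies in $H^1_{max}(\mo,\R)$, which gives the asserted containment; equivalently, $\Phi$ factors as $(\R^n)^*\to H^1(\Torus,\R)\to H^1(\mo,\R)$.

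Finally I would establish injectivity of $\Phi$, which upgrades $\dim H^1_{repr}(\mo,\R)\le n$ to equality. Here one uses that $\pi_H(\Gamma)$ is dense in the internal factor $\R^n$, a defining feature of a genuine model set: if $g\neq0$, then the displacement of a point by $g$ applied to its dense-ranging internal coordinate is unbounded and cannot be absorbed by any bounded pattern-equivariant motion, so the deformation is not an MLD coboundary and $\Phi(g)\neq0$. Cohomologically this is the statement that the constant form attached to $g$ pairs nontrivially with the image of $\Gamma$ in $H_1(\Torus)$, and that the torus-to-pattern map does not annihilate it. This last point is the main obstacle: one must verify that the factor map $H^1(\Torus,\R)\to H^1(\mo,\R)$ kills no nonzero reprojection class, i.e.\ that the polyhedral hypotheses (or at least the model-set density of $\pi_H(\Gamma)$) prevent a nontrivial reprojection from being cohomologically trivial. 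Combining the three steps yields $\dim H^1_{an}(\mo,\R)=\dim H^1_{repr}(\mo,\R)=n$ together with $H^1_{an}(\mo,\R)\subseteq H^1_{max}(\mo,\R)$.
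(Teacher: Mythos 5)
Your skeleton is the same as the paper's: Theorem \ref{MainTheorem2} reduces everything to $H^1_{repr}(\mo,\R)$; continuous homomorphisms $H\to\R$ kill the torsion factor $C$ and form a copy of $({\R^n})^*$; and the identification of the resulting classes with pullbacks under $\pim^*$ of the ``internal'' part of $H^1(\Omega_{max},\R)\cong\Gamma^{rec}\otimes_\Z\R$ is exactly the paper's Proposition \ref{prop-impi}, i.e.\ your factorization of $\Phi$ through $H^1(\Torus,\R)\to H^1(\mo,\R)$. (The paper makes your ``constant $1$-form'' heuristic precise via Corollary \ref{cor-H1} and the homotopy $\chi_t(x)=\exp 2\pi i\,\alpha(t\sigma_M(x),x)$, showing that $\pim^*$ sends $\alpha\in\Gamma^{rec}$ to $[\delta\,\alpha(\sigma_M(\cdot),\cdot)]$, whose components are the reprojection class $[\delta(\alpha^\perp\circ\sigma_M)]$ and the linear class $[\delta\alpha^\|]$.)

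The genuine gap is your injectivity argument. You assert that for $g\neq 0$ ``the displacement of a point by $g$ applied to its dense-ranging internal coordinate is unbounded and cannot be absorbed by any bounded pattern-equivariant motion.'' That is false: the internal coordinates $\sigma_M(x)$ of points $x\in\mo$ range over a dense subset of the \emph{compact} window $W$ --- it is $\pi^\perp(\Gamma)$ that is dense in $H$, not $\sigma_M(\mo)$ --- so $F_g=g\circ\sigma_M$ is bounded. Indeed, boundedness is precisely why $F_g$ is \wpe{} and hence asymptotically negligible in the first place; an unbounded generator could not define a shape conjugacy at all. So growth cannot detect nontriviality of $[\delta F_g]$, and the step you yourself flag as ``the main obstacle'' is left without a valid proof. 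Two ways to fill it: (i) the paper's way: $\pim^*$ is injective on first cohomology (quoted from \cite{BKS}), so the internal copy $({\R^n})^*\subset\Gamma^{rec}\otimes_\Z\R$ injects into $H^1(\mo,\R)$, which is the content of Proposition \ref{prop-impi}; or (ii) a direct argument: if $[\delta F_g]=0$ then $F_g$ equals a \spe{} function plus a constant, but by Corollary \ref{cor-acc} the star values at points sharing a given $R$-patch are dense in the acceptance domain $W_P$, which has nonempty interior, so $g$ would be constant on an open subset of $\R^n$ and hence zero.
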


The organization of the paper is as follows. In Section
\ref{sec-define} we review the machinery of Delone dynamical systems
that is needed in the remainder of the paper. 
In Section \ref{sec-model} we review the theory of model sets, identifying how
different model sets with the same parameter can differ.
In Section
\ref{sec-tove} we introduce the notion of \ns{} generators of shape conjugacies
and show
(Theorem \ref{mainlemma}) that all \ns{} generators are, up to local
deformation, generators of reprojections. In Section \ref{brillig} we
show (Theorem \ref{PolyhedralTheorem}) that asymptotically negligible
classes are represented by coboundaries of \ns{} generators. Taken
together, this proves Theorem \ref{NewMainTheorem}.  In Section
\ref{sec-cohomology} we interpret these results in terms of cohomology
and prove Theorems \ref{MainTheorem2} and \ref{MainTheorem3}.

Nonslip generators are introduced as a means of proving Theorem
\ref{NewMainTheorem}, but we believe that they have independent
interest. In Section \ref{sec-Meyer} we explore the significance of
the \ns{} property for model sets that do not necessarily satisfy
hypotheses H1 and H2, and for more general Meyer sets.  We prove
\begin{thm}\label{MainTheorem4}
Let $\Lambda$ be a repetitive Meyer set and $F$ a generator of
a shape deformation. If $F$ is not \ns{}, then the deformed set $\Lambda^F$
is not Meyer.
\end{thm}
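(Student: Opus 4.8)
The plan is to argue geometrically: rather than tracking eigenvalues, I will show directly that failure of the nonslip condition forces the difference set $\Lambda^F - \Lambda^F$ to fail uniform discreteness, so that $\Lambda^F$ cannot be Meyer. First I would use that a repetitive Meyer set $\Lambda$ embeds in a model set and hence carries an internal coordinate: a star map $\star$ sending each difference vector $v \in \Lambda - \Lambda$ to a point $\star(v)$ of an internal group $H$, with the defining property that $(\star(v), v)$ lies in the underlying lattice, so that $\star(y) = \star(x) + \star(v)$ whenever $x, y \in \Lambda$ and $y - x = v$. The generator $F$, being pattern equivariant, determines an asymptotic transverse displacement described by a continuous function $\phi$ on the window: for $x, y \in \Lambda$ with $y - x = v$ and with large common pattern radius, the deformed difference vector is $v + (F(y) - F(x))$, and $F(y) - F(x)$ converges to $\phi(\star(y)) - \phi(\star(x)) = \phi(\star(x) + \star(v)) - \phi(\star(x))$.

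The second step identifies the nonslip condition with additivity of this asymptotic displacement in the internal coordinate. By Theorem \ref{mainlemma} a nonslip generator is, up to local deformation, the generator of a reprojection $x \mapsto x + g(\star(x))$ for a continuous homomorphism $g \colon H \to \R^d$; for such a $g$ one has $\phi(\star(x) + \star(v)) - \phi(\star(x)) = g(\star(v))$, which depends only on $v$, so the deformed difference $v + g(\star(v))$ is single valued and $\Lambda^F$ is again a model set. Thus if $F$ is not nonslip then, after discarding the harmless locally constant (MLD) part, $\phi$ fails to be the restriction of a homomorphism along some connected internal direction, and there is a fixed increment $\delta = \star(v_0)$ realized by a difference vector $v_0 \in \Lambda - \Lambda$ for which the continuous map $h \mapsto \phi(h + \delta) - \phi(h)$ is non-constant and hence, along that connected direction, takes a continuum of values.

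The crux, which I expect to be the main obstacle, is to convert this non-constancy into an actual accumulation of honest deformed differences. By minimality of the hull the internal phases $\{\star(x) : x \in \Lambda,\ x + v_0 \in \Lambda\}$ are dense in the relevant region of the window, so I can choose points $x_k \in \Lambda$ with $x_k + v_0 \in \Lambda$ whose phases $\star(x_k)$ are dense along the connected direction witnessing non-constancy. The corresponding deformed differences $v_0 + (F(x_k + v_0) - F(x_k))$ then approximate the values $v_0 + (\phi(\star(x_k) + \delta) - \phi(\star(x_k)))$, which fill a nondegenerate continuum; hence infinitely many distinct deformed differences lie in a bounded region and accumulate, so $\Lambda^F - \Lambda^F$ is not uniformly discrete and $\Lambda^F$ is not Meyer. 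The difficulty is that $F$ is only pattern equivariant, so $F(x_k + v_0) - F(x_k)$ equals its asymptotic surrogate $\phi(\star(x_k) + \delta) - \phi(\star(x_k))$ only in the limit of large pattern radius; one must therefore grow the pattern radius along the sequence while simultaneously keeping the phases dense, controlling the convergence uniformly enough that the genuine deformed differences---and not merely their asymptotic values---accumulate, and must ensure the spread comes from a connected internal direction so that a true continuum, rather than a uniformly discrete finite set of shifts, is produced.
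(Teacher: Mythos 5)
There is a genuine gap, and it sits at the foundation of your argument: the continuous window function $\phi$ posited in your first paragraph need not exist, and its existence is essentially the property in dispute. For a general \wpe{} generator $F$ with \spe{} coboundary, all one has is a continuous function $\tilde F$ on the transversal $\Xi$; your requirement that $F(y)-F(x)$ converge, as the common pattern radius grows, to $\phi(\star(y))-\phi(\star(x))$ asserts that the asymptotics of $F$ are governed by the internal coordinate alone. But the negation of Definition \ref{def-repr} is not ``$\phi$ exists and is non-additive''; it is that $\tilde F$ takes \emph{distinct} values at pairs $(\Lambda_1,\Lambda_2)\in\Rmax^\Xi$ that agree out to arbitrarily large radius --- precisely the statement that no function of the internal coordinate, additive or otherwise, describes $F$ asymptotically. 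So in the very case the theorem addresses, your dichotomy has no object $\phi$ to apply to, and the accumulation construction of your third paragraph has no input. Two further problems compound this. First, you lean on Theorem \ref{mainlemma} and on model-set machinery (density of phases, connected internal directions), but Theorem \ref{MainTheorem4} concerns an arbitrary repetitive Meyer set: such a set is merely a relatively dense subset of a model set, its hull is not the hull of that model set, Theorem \ref{mainlemma} requires H1 and H$2'$ which are not hypotheses here, and even \wpe-ness of a restricted star map is established in the paper only for model sets. Second, the tension you yourself flag is fatal rather than technical: points whose pattern agrees with a fixed configuration out to radius $R$ have phases confined to the acceptance domain of that $R$-patch, and these domains shrink as $R$ grows (Corollary \ref{cor-W}), so ``phases dense along a direction'' and ``large common pattern radius'' pull in opposite directions; resolving exactly this tension is what the sector analysis in the proof of Theorem \ref{mainlemma} does, and only under H$2'$.

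The paper's own proof (Section \ref{sec-Meyer}) avoids internal coordinates entirely and is much softer. It defines a \ns{} \emph{set} (if $\pim(\Lambda_1)=\pim(\Lambda_2)$ and $d(\Lambda_1,\Lambda_2)\leq\epsilon$ then $\Lambda_1$ and $\Lambda_2$ agree on $B_R(0)$), observes that every Meyer set is a \ns{} set (two elements of the hull in the same $\pim$-fibre share a point, and $\Lambda_1-\Lambda_2$ is uniformly discrete), and then proves, by a purely uniform-continuity argument involving $\tilde F$ and $\s_F$, that for a \ns{} set $\Lambda$ the deformed set $\Lambda^F$ is a \ns{} set if and only if $F$ is a \ns{} generator. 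If $\Lambda^F$ were Meyer it would be a \ns{} set, forcing $F$ to be \ns{}; the contrapositive is Theorem \ref{MainTheorem4}. If you want to keep a hands-on flavor, the productive move is not to build a continuum of deformed differences from a window function, but to work directly with the witnesses to the failure of the \ns{} property: pairs that share points and agree on huge balls while $\tilde F$ differs on them by at least some fixed $c>0$, and to play this against the uniform discreteness that Meyer-ness of $\Lambda^F$ would impose --- which is, in effect, what the paper's uniform-continuity argument does without ever mentioning the window.
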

In Section \ref{examples} we exhibit a model set that does not satisfy
H2 and a generator of shape conjugacies that is not \ns{}, and a
corresponding deformation of a model set that is not Meyer. We do not
know whether it is ever possible to construct a \ns{} class that is
not a reprojection.


\section{Preliminaries on point sets and their dynamical
  systems}\label{sec-define}
In this section we review some of the necessary background on Delone
sets and their dynamical systems.  

\subsection{Dynamical system of a Delone set}
A {\em Delone set\/} is a set $\Lambda \subset \R^d$ that is uniformly
discrete and relatively dense. That is, there exists an $r>0$ such
that every ball of radius $r$ contains at most one point of $\Lambda$,
and there exists an $R>0$ such that every ball of radius $R$ contains
at least one point of $\Lambda$. 
A {\em Meyer set\/} is a Delone set $\Lambda$ for which
$\Lambda-\Lambda$ (i.e., the set of displacement vectors between
points of $\Lambda$) is uniformly discrete.  

Let $B$ be a compact subset of $\R^d$.
The $B$-{\em patch\/} of a point set $\Lambda\subset \R^d$ is the intersection $P=\Lambda\cap B$ 
of $\Lambda$ with $B$. 
We denote it by $(P,B)$ or simply by $P$. An $R$-patch of $\Lambda$ at $x$ is 
the intersection of $\Lambda$ with $B=B_R(x)$, the 
closed ball of radius $R$ centered at $x$. 

A Delone set has {\em finite local complexity\/}, or FLC, if for each
$R>0$ the set $\{B_R(0)\cap(\Lambda-x)|x\in\Lambda\}$ is finite. That is, 
if the number of $R$-patches occuring at points of
$\Lambda$ and counted up to translation is finite.  A Delone set is
{\em repetitive\/} if for every patch $P$ of $\Lambda$, there exists an
$R$ such that every $R$-patch of $\Lambda$ contains at least one
translated copy of $P$. {\em All Delone sets considered in this paper
  have FLC and are repetitive.}

Delone sets are associated with dynamical systems, an idea which goes back to \cite{Rudolph} (see, for instance \cite{ABKL} for a recent description). We pick
a metric on the space of Delone sets with given inner and outer radii
$r$ and $R$ such that two Delone sets are close if their restriction
to a large ball around the origin agree exactly, 
up to a small translation.\footnote{Some authors merely require the restrictions
of the Delone sets to the large ball to be close in the Hausdorff metric. 
If the Delone sets have FLC, this yields the same topology 
as our definition. However, when working with Delone sets that do not 
have FLC, the two topologies are different.}
$\R^d$ acts on the
space of Delone sets by translation. The closure of the orbit of a
Delone set $\Lambda$ is called the {\em continuous hull\/} of
$\Lambda$, and is denoted $\Omega_\Lambda$, or just $\Omega$ when
there is no ambiguity about which Delone set is being considered. 
$\Omega_\Lambda$ is compact if and only if $\Lambda$ has FLC. As a 
result, if $\Lambda$ has FLC and $\Omega_{\Lambda'}$ is homeomorphic to 
$\Omega_\Lambda$, then $\Lambda'$ also has FLC.

If
$\Lambda$ is a repetitive FLC Delone set, then $(\Omega_\Lambda,
\R^d)$ is a minimal dynamical system. We will also consider the
{\em canonical transversal\/} $\Xi_\Lambda$ (or simply $\Xi$) of
$\Omega_\Lambda$ which is given by the closure of the set
$\{\Lambda-x:x\in\Lambda\}$.  $\Xi_\Lambda$ consists of all point
patterns of $\Omega_\Lambda$ that contain the origin.

A Delone set $\Lambda'$ is {\em locally derived\/} from $\Lambda$ if
there exists a radius $R>0$ such that, whenever $\Lambda-x_1$ and
$\Lambda-x_2$ agree to radius $R$ around the origin, $\Lambda'-x_1$
and $\Lambda'-x_2$ agree to radius 1 around the origin. If $\Lambda'$
is locally derived from $\Lambda$ and $\Lambda$ is locally derived
from $\Lambda'$, we say that $\Lambda$ and $\Lambda'$ are {\em
  mutually locally derivable\/}, or MLD.

A local derivation of $\Lambda'$ from $\Lambda$ extends to a factor
map from $\Omega_\Lambda$ to $\Omega_{\Lambda'}$. If $\Lambda$ and
$\Lambda'$ are MLD, then this factor map is a topological conjugacy
called an {\em MLD map\/} \cite{CS2}.

\subsection{Maximal equicontinuous factor}
An important factor of the dynamical system $(\Omega,\R^d)$ is the
largest factor (up to conjugacy) on which the action is
equicontinuous. We denote this so-called {\em maximal equicontinuous
  factor\/} by $\Omega_{max}$ and the factor map by $\pi_{max}$.

The equivalence relation
$$\Rmax = \{(\Lambda_1,\Lambda_2)\in \Omega_\Lambda\times\Omega_\Lambda:
\pi_{max}(\Lambda_1) = \pi_{max}(\Lambda_2)\}$$ will play a significant role
in understanding nonslip generators. The following properties were proven
in \cite{BargeKellendonk}:
\begin{lem}

\begin{itemize}
\item Any topological conjugacy preserves $\Rmax$.
\item If two elements $\Lambda_1$, $\Lambda_2$ in the hull of a Meyer 
set satisfy
$\pi_{max}(\Lambda_1) = \pi_{max}(\Lambda_2)$, then they share a point.
\end{itemize}
\end{lem}

\section{Model sets}\label{sec-model}
A model set is a Meyer set that is
obtained by a particular construction. 

\subsection{Cut  and project scheme}
To construct a model set one needs a {\em cut
  and project scheme\/}
$(\Gamma,H,\R^d)$ and a subset $W\subset H$ called the {\em window}.

The cut \& project scheme consists of the space $\R^d$ in which the
model set lives (the {\em parallel\/} or {\em physical\/} space), a
locally compact abelian group $H$ (called the {\em internal group\/} or
{\em perpendicular space\/}) and a lattice (a cocompact discrete
subgroup) $\Gamma\subset H\times\R^d$. The set $S:=W\times \R^d$ is
called the {\em strip\/}.  As usual, we require three further
assumptions:
\begin{enumerate}
\item The projection onto the second factor $\pi^\|:H\times \R^d\to \R^d$ is 
injective when restricted to the lattice $\Gamma$,
\item Projection onto the first factor $\pi^\perp:H\times \R^d\to H$ maps 
the lattice $\Gamma$ densely into $H$,
\item If $W + h = W$ for $h\in H$ then $h=0$.
\end{enumerate}
We also use the notation $\Gamma^\|=\pi^\|(\Gamma)$,
$\Gamma^\perp=\pi^\perp(\Gamma)$, $x^\|=\pi^\|(x)$ and $x^\perp=\pi^\perp(x)$.
We write $\pi_\Gamma^\perp$ for the restriction of $\pi^\perp$ to $\Gamma$. The point set
$$\m_\xi(W) := \{\pi^\|(\gamma): \gamma \in S\cap(\Gamma+\xi)\}$$
is called the projection set of the cut \& project scheme with window
$W$ and parameter $\xi\in H\times \R^d/\Gamma$. 
Note that any element of $\ker\pi_\Gamma^\perp$ is a period of the projection set $\m_\xi(W)$.

We are interested in windows $W$ that are compact and the closure of
their interiors. In this case, the projection set $\m_\xi(W)$ is repetitive
if the parameter $\xi$ is such that 
$\pi^\perp(\Gamma + \xi) \cap \partial W$ is empty. We call such
parameters {\em non-singular\/} and denote the 
set of non-singular parameters by $NS$. 

\begin{dfn}
  A {\em model set\/} (with window $W$) is an element in the hull of a
  projection set $\m_\xi(W)$ whose parameter is non-singular. We
  always assume that $W$ is compact and the closure of its interior.
  We call the model set {\em polyhedral\/} if $H = \R^n$
  and the window is a finite disjoint
  union of polyhedra.
\end{dfn}
Model sets are repetitive Meyer sets.
It is well-known that MLD maps send model sets
to model sets, and send Meyer sets to Meyer sets.

\begin{remarks}
  Our use of the term {\em model set\/} is slightly different than
  that of some other authors.  First, a model set in our sense need
  not to be a projection set with closed window, but might be what
  elsewhere is called an {\em inter-model set\/}, containing some but
  not all points $x^\|$ for which $x^\perp$ lies on the boundary of
  $W$. Second, the requirement that the parameter be non-singular
  automatically makes our model sets repetitive. 
We also note that the window being
  the closure of its interior is not required by all authors. For
  instance \cite{BaakeGrimm13} demand only that the window be
  relatively compact and have non-empty interior.
\end{remarks}

\subsection{Non-singular parameters and model sets}
A model set is called {\em non-singular\/} if it is a projection set
$\m_\xi(W)$ with non-singular parameter $\xi$. 
Since the window is
the closure of its interior, this occurs for a dense $G_\delta$-set of
$\xi$. Therefore these model sets are also
called {\em generic\/}.
   
For a fixed window $W$, the hull of a non-singular model set $\m_{\xi_0}(W)$ contains
all other model sets $\m_\xi(W)$ with $\xi \in NS$. In other words,
the hull of a non-singular model set with window $W$ depends not on
the choice of the non-singular parameter $\xi$ but only on the
cut and project scheme and the window. 

Since the cut and project scheme remains largely
the same in what follows we denote the hull by $\Omega(W)$. Hence a model
set with window $W$ is an element of $\Omega(W)$.

It is not difficult to see that $\m_{\xi}(W)=\m_{\xi'}(W)$ if
and only if $\xi-\xi'\in\Gamma$. Thus to every model set 
$\m=\m_\xi(W)$ with non-singular $\xi$, we can uniquely associate the
set $\xi + \Gamma$ of parameters associated to $\m$. This map from 
the nonsingular model sets onto $NS/\Gamma$ can be extended 
by continuity to a map from 
$\Omega(W)$  onto the closure of $NS/\Gamma$. Since the non-singular parameters
are dense, the image of this map is the torus 
$(H\times \R^d)/\Gamma$; the map is called the 
{\em torus parametrization}. 
It turns out that this torus is also 
the maximal equicontinuous
factor, the action being given
by left translation on the second factor $\R^d$
\cite{BargeKellendonk}. 

To recap: 
The maximal equicontinuous 
factor map $\pim:\Omega(W)\to (H\times \R^d)/\Gamma$ is the continuous
extension of the map $\m_\xi(W)\mapsto \xi$ (for $\xi\in NS$)
which associates to a non-singular model set
$\m_\xi(W)$ its parameter $\xi$ (mod $\Gamma$). 
This map is injective precisely on
its pre-image of $NS$.
The elements of the hull that are mapped
by $\pim$ to singular points are called {\em singular model sets\/}. 

If $M$ is a singular model set, we will call $\pim(M)$ the parameter of $M$.
$M$ is not generally equal to the projection set $\m_{\pim(M)}(W)$. However, 
\begin{prop}\label{prop-nonsing}
Let $M$ be a model set with window $W$ and $\xi = \pim(M)$. Then 
$$ \m_\xi(Int(W))\subset M \subset \m_\xi(W).$$
\end{prop}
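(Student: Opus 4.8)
The plan is to restate both inclusions in terms of the star map and then treat them separately. Recall that a point $y\in\R^d$ lies in $\m_\xi(W)$ precisely when its unique lift $(\sigma_\xi(y),y)\in\Gamma+\xi$ has perpendicular component $\sigma_\xi(y)\in W$, and that replacing $W$ by $Int(W)$ characterises membership in $\m_\xi(Int(W))$ in the same way. Thus the claim $M\subset\m_\xi(W)$ says that every $x\in M$ has $\sigma_M(x)\in W$, while $\m_\xi(Int(W))\subset M$ says that every $y$ whose lift has $\sigma_\xi(y)\in Int(W)$ actually belongs to $M$. The first of these is immediate from the facts recorded in the star-map subsection: for a model set $M$ with $\pim(M)=\xi$ one has $(\sigma_M(x),x)\in\Gamma+\xi$ for every $x\in M$, and the image of $\sigma_M$ is contained in $W$. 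Hence $(\sigma_M(x),x)\in(\Gamma+\xi)\cap S$, so $x\in\m_\xi(W)$.

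The reverse inclusion $\m_\xi(Int(W))\subset M$ is the substantial one, and I would establish it by a limiting argument. Since $M$ lies in the minimal hull $\Omega(W)$, which is the orbit closure of a fixed non-singular model set $\m_{\xi_0}(W)$, I would choose a sequence of translates $M_j=\m_{\xi_0}(W)-t_j\to M$. Each $M_j$ is again a genuine projection set, namely $M_j=\m_{\zeta_j}(W)$ with $\zeta_j=\xi_0-[0,t_j]_\Gamma$, and by continuity of $\pim$ its parameter satisfies $\zeta_j=\pim(M_j)\to\pim(M)=\xi$. Because $H\times\R^d\to(H\times\R^d)/\Gamma$ is a local homeomorphism I can lift this convergence to representatives $\hat\zeta_j\to\hat\xi$ in $H\times\R^d$.

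Now take $y\in\m_\xi(Int(W))$, with lift $(\sigma_\xi(y),y)=\gamma_0+\hat\xi$ for some $\gamma_0\in\Gamma$, and set $(a_j,y_j):=\gamma_0+\hat\zeta_j\in\Gamma+\zeta_j$. Then $a_j\to\sigma_\xi(y)\in Int(W)$ and $y_j\to y$; since $Int(W)$ is open, $a_j\in W$ for all large $j$, whence $y_j=\pi^\|(\gamma_0+\hat\zeta_j)\in\m_{\zeta_j}(W)=M_j$. I would then close the argument with the persistence of points under convergence in the FLC tiling metric: if $M_j\to M$ and $y_j\in M_j$ with $y_j\to y$, then $y\in M$. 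Concretely, for $R>|y|+2$ and large $j$ the sets $M_j$ and $M$ agree on $B_R(0)$ after a translation $s_j\to 0$, so $y_j-s_j\in M\cap B_R(0)$; since $M\cap B_R(0)$ is finite and $y_j-s_j\to y$, this forces $y\in M$. Applied to the $y_j$ just constructed, this yields $y\in M$.

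I expect the main obstacle to be this second inclusion, and within it two delicate points. The first is producing an approximating sequence whose parameters genuinely converge to $\xi$ in a way I can lift to $H\times\R^d$; I would resolve this by using translates of a single non-singular model set, whose parameters differ from $\xi$ only in the parallel direction, rather than arbitrary elements of $\pim^{-1}(\zeta_j)$ (for singular $\xi$ the fibre of $\pim$ may contain several model sets, so a careless choice need not converge to the given $M$). The second is the persistence lemma, which is exactly where finite local complexity is essential: without it a point could appear in the limit with no nearby approximants, and the boundary behaviour that separates $\m_\xi(Int(W))$ from $\m_\xi(W)$ would no longer be controlled.
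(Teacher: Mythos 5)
Your proof of the inclusion $\m_\xi(Int(W))\subset M$ is correct and is essentially the paper's argument: approximate $M$ by translates of a non-singular projection set, note that their parameters converge to $\xi$ by continuity of $\pi_{max}$, lift that convergence to $H\times\R^d$, use openness of $Int(W)$ to place the transported lattice point in the approximants, and conclude by persistence of points under convergence in the FLC metric. (The paper organizes the last steps slightly differently: it normalizes the lifts so that all approximants and $M$ are subsets of $\Gamma^\|$ and then uses uniform convergence of the star maps $\sigma_{\xi_n}\to\sigma_\xi$ on that common domain, whereas you track a moving point $y_j\to y$ and invoke an explicit persistence lemma. Both are fine.)

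The genuine gap is in the other inclusion, $M\subset\m_\xi(W)$, which you dismiss as ``immediate from the facts recorded in the star-map subsection.'' That is circular. The facts you cite --- that every $x\in M$ has a lift $(\sigma_M(x),x)\in\Gamma+\xi$ and that the image of $\sigma_M$ lies in $W$ --- presuppose that $\sigma_M$ is defined on all of $M$, i.e.\ that $M\subset\pi^\|(\Gamma+\xi)$, and that its values stay inside the window. For non-singular $M$ this holds by definition, but then the whole proposition is trivial since $M=\m_\xi(W)$. The entire content of the proposition concerns singular $M$, which is known only as a limit of translates of non-singular projection sets; for such $M$, the statements ``every point of $M$ lifts to $\Gamma+\xi$'' and ``$\sigma_M(M)\subset W$'' are exactly the inclusion $M\subset\m_\xi(W)$ to be proved. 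The star-map subsection records these facts without proof (they are justified a posteriori by this very proposition), so quoting them establishes nothing. The paper proves this inclusion by running the same limiting argument with the open set $W^c$ in place of $Int(W)$: if $x\in\Gamma^\|$ has $\sigma_\xi(x)\in W^c$, then $\sigma_{\xi_n}(x)\in W^c$ for all large $n$, so $x\notin\m_{\xi_n}$ for large $n$, and hence $x\notin M$. Alternatively, within your own framework one can argue directly: if $x\in M$, your persistence step reversed produces points $p_j\in M_j$ with $p_j\to x$; their lifts $\gamma_j+\hat\zeta_j$ have perpendicular part in the compact set $W$ and parallel part near $x$, so by discreteness of $\Gamma$ the $\gamma_j$ take finitely many values and are eventually a single $\gamma$, whence $x=\pi^\|(\gamma+\hat\xi)$ and $\sigma_\xi(x)=\lim_j\pi^\perp(\gamma+\hat\zeta_j)\in W$ because $W$ is closed. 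Either way, a real argument --- using openness of $W^c$, or closedness of $W$ together with discreteness of $\Gamma$ --- is required; it cannot be read off from the definitions, and your proposal as written proves only half of the proposition.
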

\begin{proof}
  A model set $M$ with window $W$ is a limit of a sequence $(\m_\eta
  -x_n)_n$ where $\eta$ is non-singular and $x_n\in \R^d$. But
  $\m_\eta - x_n = \m_{\xi_n}$, where $\xi_n = \eta +
  [0,x_n]_\Gamma$. By continuity of $\pim$ we thus have $\xi = \lim
  \xi_n$ and so the sequence $(\xi_n)_n$ lifts to a sequence
  $(\tilde\xi_n)_n\subset H\times\R^d$ which converges to a lift
  $\tilde\xi$ of $\xi$. We may quickly restrict to the case that
  $\tilde\xi_n^\|=0$, because otherwise we can replace $x_n$ by
  $x_n-\tilde\xi_n^\|$ and $\eta$ by $\eta-[0,\tilde\xi^\|]_\Gamma$ to
  reduce to that situation. Then $M$ and all $\m_{\xi_n}$ are subsets
  of $\Gamma^\|$.  In particular, $\Gamma^\|$ is the common domain of
  all functions $\sigma_{\xi_n}$, and the sequence of functions
  $(\sigma_{\xi_n})_n$ converges uniformly to $\sigma_{\xi}$, since
  $(\tilde\xi_n)_n$ converges to $\tilde\xi$. Hence if
  $\sigma_{\xi}(x)\in Int(W)$ then an open neighborhood of
  $\sigma_{\xi}(x)$ in $Int(W)$ contains all $\sigma_{\xi_n}(x)$ for
  $n$ sufficiently large which shows that $\sigma_{\xi_n}(x)\in W$ for
  all $n$ sufficiently large. The latter means that $x$ belongs to
  $\m_{\xi_n}$ for all $n$ sufficiently large and thus also to
  $M$. This shows that $\m_\xi(Int(W))\subset M$.

  To obtain the other inclusion we use the same kind of argument but
  for the complement $W^c$ instead of $Int(W)$.  Indeed, if
  $\sigma_{\xi}(x)\in W^c$ then an open neighborhood of
  $\sigma_{\xi}(x)$ in $W^c$ contains all $\sigma_{\xi_n}(x)$ for $n$
  sufficiently large which shows that $x$ cannot belong to $M$.
\end{proof}

\subsection{The star map}
The star map sends points in $\R^d$ to points in $H$. There are several related star maps to be considered:
\begin{itemize}
\item The {\em general star map\/} $\sigma$ is a group homomorphism
  $\Gamma^\| \to H$, $\sigma(x) = \pi^\perp(\gamma)$ where
  $\gamma\in\Gamma$ is the unique lift of $x\in\Gamma^\|$ in $\Gamma$
  under $\pi^\|$. In particular, $[(\sigma(x),x)]_\Gamma =
  [(0,0)]_\Gamma$, where $[\cdot]_\Gamma$ denotes an equivalence class
  mod $\Gamma$. We denote the general star map also simply with a
  star, $\sigma(x)=x^*$.  The general star map is not continuous if
  one gives $\pi^\|(\Gamma)$ the relative topology induced by $\R^d$.

\item The {\em general star map $\sigma_\xi$ with parameter $\xi$\/}
  sends $\pi^\| (\Gamma + \xi)$ to $H$, where $(\sigma_\xi(x), x)$ is
  the unique lift of $ x \in \pi^\| (\Gamma + \xi)$ to $\Gamma+\xi$.
  Like the general star map without parameter (or equivalently, with
  parameter 0), this has a dense domain in $\R^d$, dense range in $H$,
  and is not continuous.

\item If $\Lambda$ is a Delone subset of $\pi^\| (\Gamma + \xi)$ we
  denote the restriction of $\sigma_\xi$ to $\Lambda$ by
  $\sigma_\Lambda$ and call it the {\em star map of $\Lambda$\/}.  Note
  that the support of $\sigma_\Lambda$ is uniformly discrete. This
  will allow us later to talk about weak pattern equivariance of
  $\sigma_\Lambda$. If $\Lambda$ is a model set with window $W$ then
  the image of $\sigma_\Lambda$ is a dense subset of $W$.
\end{itemize}
Note that if $x_1$ and $x_2$ are points of $M$ and $\pim(M) = \xi$,
then both $(\sigma_M(x_1), x_1)$ and $(\sigma_M(x_2),x_2)$ are in
$\Gamma + \xi$, so $x_2-x_1\in\Gamma^\|$ and
$$\sigma_M(x_2) - \sigma_M(x_1) =\sigma_\xi(x_2) - \sigma_\xi(x_1) = (x_2 - x_1)^*.$$
The factor map $\pim$ is related to the star map of a pattern as
follows. If $M$ is a model set and $\xi = \pim(M)$ then
$(\sigma_M(x),x) \in \Gamma + \xi$ for all points $x\in M$. Thus
\begin{equation*}
\pim(M)= [(\sigma_M(x),x)]_\Gamma.
\end{equation*} 
In particular, if $0 \in M$, then $\pim(M)= [(\sigma_M(0),0)]_\Gamma$.
Let $\imath_{H}:H\to H\times\R^d/\Gamma$ be given by $\imath_{H}(h) =
[h,0]_\Gamma$. By assumption $\imath_{H}$ is injective. Furthermore,
by equivariance of $\pim$ we have $\pim(M-x)= [(\sigma_M(x),0)]_\Gamma$
for all $x\in M$. Thus
\begin{equation}\label{eq-pistar}
\sigma_M(x) = \imath_H^{-1}\circ \pim(M-x) 
\end{equation}
for all $x\in M$.

\subsection{Acceptance domains of patches}
Let $M$ be a model set with window $W$
and let $B$ be a compact set. We call the set
$P= M \cap B$ the ``$B$-patch of $M$''.  More
generally we say that a finite set $P\in\R^d$ is a patch for a model set with window $W$
if there is $M\in\Omega(W)$ and a compact set $B\in\R^d$ such that $P
= M\cap B$. 
We wish to determine a condition for when this is the case.
It will be sufficient for our applications to consider the case that
$P$ contains the origin which we will assume throughout.

Let $D$ be the preimage $\sigma^{-1}((W-W)\cap\Gamma^\perp)$. 
This gives the set of all possible displacements between points in the same pattern, since if 
$x_1, x_2 \in M$, then $\sigma_M(x_2) - \sigma_M(x_1)=\sigma(x_2-x_1) \in W-W$.   
Let $P' = (D \cap B) \backslash P$. $P'$ is
the set of points that can appear in a $B$-patch of some model set which contains $0$
but are not in the $B$-patch $P$. Note that $P'$ is a finite set. Let 
$$W_P^o = {\bigcap_{x \in P} (Int(W) - {x}^*) \cap \bigcap_{x' \in P'} (W^c - {x'}^*)}.$$
$W_P:=\overline{W_P^o}$ 
is called the {\em acceptance domain\/} of $P$. By construction, it is
the closure of an open set and hence the closure of its interior. 

\begin{prop}\label{prop-acc}
Let $P$ be a $B$-patch for a model set with window $W$.
We assume that $P$ contains the origin. Let
$M\in\Omega(W)$ be a possibly different model set which contains the origin. 
If $M$ is non-singular then, for all $x\in M$,  $\sigma_{M} (x)
\in Int(W_P)$ if and only if $P = (M-x)\cap B$. 
\end{prop}

\begin{proof}
Let $\xi=\pim(M)$. Assuming that $M$ is
non-singular this means that $M =\m_\xi(W)$.
The condition $M \cap B= P$ has two parts:
\begin{enumerate}
\item All the points of $P$ should be in $M$. This is equivalent to
  having $\sigma_{\xi}(x) \in W$ for all $x\in P$. But
  $\sigma_{\xi}(x)-\sigma_{\xi}(0) = x^*$, so this is in turn
  equivalent to $\sigma_{M}(0)\in W - x^*$.
\item All the points of $P'$ should not be in $M$. That is, for each
  $x' \in P'$, $\sigma_{\xi}(x') \in W^c$, so $\sigma_{M}(0) \in W^c
  - {x'}^*$, where $W^c$ denotes the complement of $W$.
\end{enumerate}
Thus $P \in M$ if and only if 
\begin{equation} \label{accept} 
\sigma_{M}(0) \in \bigcap_{x \in P} (W - {x}^*) \cap \bigcap_{x' \in P'} (W^c - {x'}^*).
\end{equation} 
Recall that $\pim(M) = [(\sigma_{M}(0),0)]_\Gamma$. Therefore and
since $M$ was assumed non-singular, 
$\sigma_{M}(0)$ cannot lie on the boundary of $W_P^o$. Thus  
we can replace the right-hand side with the closed set $W_P$.
\end{proof}

\begin{lem}
  Suppose that $M$ is a model set containing the origin and satisfying
  H1 and H2. The acceptance domain of every (non-empty) patch of $M$
  containing the origin can be written as a finite union of closed
  convex sets that have non-empty interior.
\end{lem}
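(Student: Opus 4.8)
The plan is to let the two hypotheses do complementary work: H1 lets me treat one slice of $H=\R^n\times C$ at a time, and H2 supplies a finite family of hyperplanes from which everything in sight is built. First I would record that, since $C$ is finite and discrete, each slice $\R^n\times\{c\}$ is both open and closed in $H$, so $W$ decomposes as a disjoint union $\bigsqcup_{c\in C}W_c\times\{c\}$ with each $W_c\subset\R^n$ a finite union of polytopes (compactness of $W$ forces boundedness). The only feature I need from H2 is that the topological boundary $\partial W_c$ lies in a finite union $\mathcal H_c$ of affine hyperplanes, namely the facet hyperplanes of the constituent polytopes. Translation by $-x^*$ in $H$ permutes the slices and translates within the $\R^n$-factor, while both $Int$ and complement are computed slicewise; so the whole computation of $W_P^o$ reduces to a computation inside each copy of $\R^n$, where $Int(W)-x^*$ and $W^c-x'^*$ have boundary contained in a finite family of translated hyperplanes.

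The key step is a hyperplane-arrangement argument. Let $\mathcal H$ be the finite collection obtained by translating every hyperplane of every $\mathcal H_c$ by $-x^*$ for each $x\in P\cup P'$ (in the appropriate slice); this is finite because $P\cup P'$ is finite. The open chambers of $\mathcal H$ are convex and connected, and each is disjoint from $\bigcup\mathcal H\supseteq\partial(W-x^*)$ for every $x\in P\cup P'$; hence by connectedness each chamber lies entirely in the interior of $W-x^*$ or entirely in its exterior. Consequently each defining set $Int(W)-x^*$ and $W^c-x'^*$ is either all-in or all-out on every chamber, and therefore so is their intersection $W_P^o$: every chamber is either contained in $W_P^o$ or disjoint from it. I expect this to be the main obstacle, precisely because $W$ is a union of polytopes rather than a single convex body, so neither $Int(W)$ nor $W^c$ is convex; the role of the arrangement is to recover convexity locally, on full-dimensional cells.

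To finish, I would observe that $W_P^o$ is open and bounded (it sits inside $Int(W)-x^*$ for any $x\in P$, with $W$ compact) and that $\bigcup\mathcal H$ is nowhere dense, so $W_P^o\setminus\bigcup\mathcal H$ is dense in $W_P^o$. That difference is exactly the union of the chambers meeting $W_P^o$, which by the previous paragraph equals the union of the chambers contained in $W_P^o$; taking closures and using that a finite union commutes with closure gives
\[
W_P=\overline{W_P^o}=\bigcup_{C\subseteq W_P^o}\overline{C},
\]
a finite union over chambers $C$. Each $\overline{C}$ is the closure of a nonempty bounded open convex chamber, hence a compact convex polytope with nonempty interior; carrying along the slice label $c$ and recalling that each slice is open in $H$, these pieces are convex subsets of a single slice with nonempty interior in $H$, which is the assertion. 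The remaining work is routine bookkeeping: that translation commutes with $Int$ and with $(\cdot)^c$, and that the $C$-component of the star map only permutes the slice labels.
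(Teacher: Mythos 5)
Your proof is correct and takes essentially the same route as the paper's: both decompose the acceptance domain into finitely many convex polyhedral pieces cut out by the (translated) boundary hyperplanes of the window, with the closure-of-its-interior/density observation supplying the non-empty interiors. The paper's proof is just a three-sentence sketch --- $W_P$ is a finite union of polyhedra, each decomposes into convex polyhedra, and these can be taken to have non-empty interior --- and your hyperplane-arrangement chamber argument is precisely the standard way of making that decomposition rigorous.
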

\begin{proof} Since $W$ is a finite union of connected polyhedra this
  is also the case for $W_P$.  We can decompose $W_P$ into a finite
  union of connected polyhedra and each of those into a finite union
  of convex polyhedra. Since $W_P$ is the closure of its interior the
  convex polyhedra can be taken to have non-empty interior.
\end{proof}
We formulate the result of the above lemma as a hypothesis on the
window $W$ that is weaker than H2.
\begin{enumerate} 
\item[H$2'$.] The acceptance domain of every patch containing the origin
can be written as a finite union of closed convex sets that have non-empty interior.
\end{enumerate}

\begin{lem}\label{lem-W} Consider a cut \& project scheme
  $(\Gamma,H,\R^d)$ and a compact subset $W\subset H$ that is the
  closure of its interior.
  For any neighborhood $U$ of $0\in H$ there exists a finite set
  $J\subset W\cap\pi^\perp(\Gamma)$ such that
  $\emptyset\neq\bigcap_{u\in J} W-u \subset U$.
\end{lem}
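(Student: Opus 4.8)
The plan is as follows. First observe that the nonemptiness assertion $\bigcap_{u\in J}(W-u)\neq\emptyset$ is automatic for \emph{any} $J\subseteq W$: since each $u\in J$ lies in $W$ we have $0=u-u\in W-u$, so $0$ belongs to every translate $W-u$, and hence to their intersection. Thus the whole content of the lemma is the inclusion $\bigcap_{u\in J}(W-u)\subseteq U$. Replacing $U$ by an open neighborhood of $0$ contained in it, I may assume $U$ is open. The strategy is then to first compute the intersection over \emph{all} admissible translates and afterwards extract a finite subfamily by compactness.

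The key step is to show $\bigcap_{u\in W\cap\pi^\perp(\Gamma)}(W-u)=\{0\}$. Since $\pi^\perp(\Gamma)$ is dense in $H$ by assumption (2) and $Int(W)$ is open and nonempty, $Int(W)\cap\pi^\perp(\Gamma)$ is dense in $Int(W)$, so $W\cap\pi^\perp(\Gamma)$ is dense in $W=\overline{Int(W)}$; in particular it is nonempty. Suppose $x$ lies in the intersection, so $x+u\in W$ for every $u$ in this dense subset of $W$. As $W$ is closed and translation is continuous, $\{w\in H:x+w\in W\}$ is closed and contains a dense subset of $W$, hence contains $W$; that is, $x+W\subseteq W$. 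The hard part is to promote this inclusion to the equality $x+W=W$. For this I would use a Haar measure $\mu$ on $H$: translation invariance gives $\mu(x+W)=\mu(W)$, which is finite since $W$ is compact and positive since $W$ has nonempty interior. If the inclusion were strict, then because $x+W$ is closed there would be a point of $W$ with an open neighborhood missing $x+W$, and intersecting that neighborhood with $Int(W)$ produces a nonempty open, hence positive-measure, subset of $W\setminus(x+W)$, contradicting $\mu(x+W)=\mu(W)$. Therefore $x+W=W$, and assumption (3) forces $x=0$.

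Finally I would run the compactness reduction. Fix any $u_0\in W\cap\pi^\perp(\Gamma)$ and set $A=(W-u_0)\setminus U$, a closed subset of the compact set $W-u_0$ and hence compact. Because $0\in U$, every point of $A$ is nonzero, so by the previous step each $x\in A$ fails to lie in $W-u$ for some $u\in W\cap\pi^\perp(\Gamma)$; equivalently, the open sets $(W-u)^c$ cover $A$. I extract a finite subcover indexed by $u_1,\dots,u_k$ and set $J=\{u_0,u_1,\dots,u_k\}\subseteq W\cap\pi^\perp(\Gamma)$. Then $A\cap\bigcap_{i=1}^{k}(W-u_i)=\emptyset$, so $\bigcap_{u\in J}(W-u)=(W-u_0)\cap\bigcap_{i=1}^{k}(W-u_i)$ is disjoint from $H\setminus U$, i.e.\ contained in $U$; together with the automatic nonemptiness this proves the lemma. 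The only genuinely delicate point is the measure-theoretic upgrade of $x+W\subseteq W$ to $x+W=W$; the density and compactness steps around it are routine.
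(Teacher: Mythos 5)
Your proof is correct, and it follows the same overall skeleton as the paper's: both arguments reduce the lemma to the key fact that $\bigcap_{u\in W\cap\pi^\perp(\Gamma)} (W-u) = \{0\}$ and then use compactness of $W$ to extract a finite subfamily. The differences are real, though. First, the paper simply cites Schlottmann (Lemma~4.1 of \cite{Schlottmann}) for the key fact, whereas you prove it from scratch: density of $\pi^\perp(\Gamma)$ in $H$ plus $W=\overline{Int(W)}$ gives $x+W\subseteq W$ for any $x$ in the full intersection, and your Haar-measure argument (translation invariance, finiteness on the compact $W$, positivity on the nonempty open set $V\cap Int(W)$) correctly upgrades this to $x+W=W$, at which point assumption (3) of the cut \& project scheme forces $x=0$. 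This makes your proof self-contained where the paper's is not. Second, your compactness extraction differs in flavor: the paper uses countability of $W\cap\pi^\perp(\Gamma)$ to build a nested exhaustion by finite sets $J_1\subset J_2\subset\cdots$ and then a limit-point argument in the compact set $(W-W)\setminus U$, while you cover the compact set $A=(W-u_0)\setminus U$ by the open complements $(W-u)^c$ and take a finite subcover. Your version avoids any appeal to countability of the lattice and is arguably cleaner; the paper's version is shorter only because it outsources the hard step. Your observation that nonemptiness is automatic (since $0\in W-u$ for every $u\in W$) is also a nice simplification that the paper leaves implicit.
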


\begin{proof}
  Without loss of generality we can assume that $U$ is contained in
  the compact set $W-W$, so that $K = (W-W) \backslash U$ is compact.
  Since $W \cap \pi^\perp(\Gamma)$ is countable, we can find a
  sequence of nested finite sets $J_1 \subset J_2 \subset \cdots$ with
  $\bigcup J_i = W \cap \pi^\perp(\Gamma)$.  If every intersection
  $\bigcap_{u\in J_i} W-u$ contains a point $x_i \in K$ then, by
  compactness, there is a limit point $x_\infty \in K$ such that
  $x_\infty \in \bigcap_{u\in \pi^\perp(\Gamma)} W-u$.  But
  Schlottmann proved \cite{Schlottmann}[Lemma~4.1] that $\bigcap_{u\in
    W \cap \pi^\perp(\Gamma)} W-u = \{0\}$, which is a contradiction.
\end{proof}

\begin{cor}\label{cor-W} Let $M$ be a model set containing the origin.
  Any open subset $U$ of the window of $M$ contains the acceptance domain
  for a patch of $M$ that contains the origin.
\end{cor}

\begin{proof} 
Pick $x \in M$ such that $\sigma_M(x) \in U$, and let 
$U' = U - \sigma_M(x)$, which is an open neighborhood of $0$. 
We apply Lemma~\ref{lem-W} to obtain a finite set $J\subset
W\cap\pi^\perp(\Gamma)$ such that 
  $\emptyset\neq\bigcap_{u\in J} W-u \subset U'$. For each $u$ pick
$q\in M$ such that $\sigma_M(q) -\sigma_M(x) = \sigma(q-x) =
u$. Denoting by $Q$ the set of such  points $q$ we have 
$\emptyset \neq \bigcap_{q \in Q} W - \sigma(q)\subset U$. 
Let $B\subset \R^d$ be any compact neighborhood of the origin containing $Q$. 
The acceptance domain of $P = B\cap M$ is then a subset of $U$. Furthermore, $P$ contains the origin.
\end{proof}

\subsection{Singular model sets}
We now describe, in more detail than in 
Proposition \ref{prop-nonsing}, 
the potential difference between two singular model
sets of the same hull that have the same parameter. We keep the hull fixed,
and hence the cut and project scheme and window $W$, 
but we allow the parameter to vary.
We assume for our analysis that the window $W$
is polyhedral and decompose it as a polyhedral complex. 

Let $\F(W) $
be the set of open faces of $W$.  Thus $W$ is the disjoint union of
its interior $Int(W)$ with the $f\in\F(W)$.
Let $V(f)$ be the vector space parallel to
$f$, that is, the space spanned by $f-f$, and $A(f) = V(f) +f$ the affine space parallel to $V(f)$
which contains $f$. The closure of $V(f)\cap \Gamma^\perp$ can be written
$$ \overline{V(f)\cap \Gamma^\perp} = H(f) + \Delta(f) $$
where $H(f)$ is the connected component of $0$ of this closure, 
and hence a real vector space, and
$\Delta(f)$ a (uniformly) discrete subgroup of $\Gamma^\perp$. 
Note that $\sigma^{-1}(H(f)\cap \Gamma^\perp) $ is a sublattice of $\Gamma^\|$; we
let $E(f)$ be its real span. 
\begin{lem}\label{lem-codim} 
$E(f)$ has codimension at least $1$. 
\end{lem}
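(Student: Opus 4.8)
The plan is to control the single sublattice on which $E_f$ depends, namely
$\Gamma_f:=\{\gamma\in\Gamma:\pi^\perp(\gamma)\in H_\xi(f)\}=\Gamma\cap\big(H_\xi(f)\times\R^d\big)$, whose parallel image $\pi^\|(\Gamma_f)=\sigma^{-1}\big(H_\xi(f)\cap\Gamma^\perp\big)$ is the lattice whose span is $E_f=E_\xi(f)$. Everything hinges on the one feature that separates a proper face from the interior of the window: since $f\in\F(W)$ is a proper open face of the polyhedral window, $V(f)$ is a proper subspace of the Euclidean factor $\R^n$ of $H$, and hence so is $H_\xi(f)\subseteq V(f)$. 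Writing $a:=\dim H_\xi(f)$, this says $a\le\dim V(f)\le n-1$, so that $n-a\ge1$. (The finite factor $C$ of $H$ never enters, because $f$ and $V(f)$ sit inside a single translate of $\R^n$; I would run the whole count in that $\R^n$.)

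First I would bound $\mathrm{rank}\,\Gamma_f$ from above. Composing $\pi^\perp$ with the quotient $H\to H/H_\xi(f)\cong\R^{\,n-a}$ gives a homomorphism on $\Gamma$ whose kernel is exactly $\Gamma_f$, so $\Gamma/\Gamma_f$ embeds into $\R^{\,n-a}$; moreover its image is \emph{dense}, because $\pi^\perp(\Gamma)$ is dense in $H$ by assumption~(2) of the cut \& project scheme. Here I would invoke the elementary fact that a finitely generated dense subgroup of $\R^k$ with $k\ge1$ must have rank at least $k+1$ — a subgroup of rank $\le k$ is either discrete or contained in a proper subspace, so it cannot be dense. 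With $k=n-a\ge1$ this yields $\mathrm{rank}(\Gamma/\Gamma_f)\ge n-a+1$, and therefore
\[
\mathrm{rank}\,\Gamma_f=\mathrm{rank}\,\Gamma-\mathrm{rank}(\Gamma/\Gamma_f)\le(n+d)-(n-a+1)=a+d-1 .
\]

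It remains to convert this rank bound into the codimension statement. By construction $\pi^\perp(\Gamma_f)=H_\xi(f)\cap\Gamma^\perp$ is dense in $H_\xi(f)$, so $\pi^\perp$ carries $\mathrm{span}_\R\Gamma_f$ onto all of $H_\xi(f)$; the subspace of $\R^d$ that actually supports $E_f$ is the kernel of this surjection, $\mathrm{span}_\R\Gamma_f\cap\ker\pi^\perp$. Counting dimensions,
\[
\dim E_f=\dim\mathrm{span}_\R\Gamma_f-\dim H_\xi(f)\le\mathrm{rank}\,\Gamma_f-a\le(a+d-1)-a=d-1,
\]
so $E_f$ is a proper subspace of $\R^d$, as claimed. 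The step I expect to be the real obstacle is this last identification: one must check that the occurring points $\m_\xi(f_\eta)$ lie in a single coset of $\mathrm{span}_\R\Gamma_f\cap\ker\pi^\perp$ rather than merely in the full image $\pi^\|(\mathrm{span}_\R\Gamma_f)$ — the two need not coincide, and they agree for our purposes only after using that $f_\eta$ is a \emph{bounded} piece of $H_\xi(f)$, which confines the relevant lattice points to a bounded neighbourhood of that sheet. Granting the identification, the proper-face input $a\le n-1$ together with the density of $\Gamma^\perp$ supplies exactly the one unit of rank deficit that becomes the asserted codimension.
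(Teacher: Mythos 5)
Your route is genuinely different from the paper's. The paper argues by \emph{density}: for an open bounded $V\subset H_\xi(f)$ it treats $(\Gamma_\xi(f),H_\xi(f),E_\xi(f))$ as a cut \& project scheme, concludes that $\m_\xi(V)$ is relatively dense (hence of positive lower density) in $E_\xi(f)$, and then uses density of $\Gamma^\perp$ in $H$ to place infinitely many pairwise disjoint translates $V+\psi$ inside $W$; if $E_f$ were all of $\R^d$, the corresponding disjoint translates of $\m_\xi(V)$ would force $\m_\xi(W)$ to have infinite density, contradicting uniform discreteness. You instead count ranks. That half of your argument is correct: $H_\xi(f)\subseteq V(f)$ is a proper subspace of $\R^n$, the image of $\Gamma$ in $H/H_\xi(f)$ is dense, a finitely generated dense subgroup of $\R^k$ with $k\ge 1$ has rank at least $k+1$, and therefore $\mathrm{rank}\,\Gamma_f\le (n+d)-(n-a+1)=a+d-1$.

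The gap is exactly the step you flag, and it is not a technicality that boundedness of $f_\eta$ can absorb. By definition $E_f$ is the span of $\pi^\|(\Gamma_f)$, i.e.\ the \emph{image} of $S:=\mathrm{span}_\R\Gamma_f$ under $\pi^\|$, so $\dim E_f=\dim S-\dim\bigl(S\cap(H\times\{0\})\bigr)$; your formula instead computes $\dim S-\dim\pi^\perp(S)=\dim\bigl(S\cap(\{0\}\times\R^d)\bigr)$. These agree precisely when $S\supseteq H_\xi(f)\times\{0\}$, equivalently when $\Gamma_f$ is cocompact in $H_\xi(f)\times E_\xi(f)$ — and nothing you have established implies this. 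Concretely, the group $\Gamma_f=\Z\,(1,(1,0))+\Z\,(\sqrt2,(0,1))\subset\R\times\R^2$ (first coordinate internal) is discrete, has dense internal projection $\Z+\sqrt2\,\Z\subset\R$, and satisfies your rank bound, yet $\pi^\|(\Gamma_f)=\Z^2$ spans all of $\R^2$, while your formula would predict dimension $1$. So the listed properties simply do not imply the identification. Your closing remark about $f_\eta$ being bounded controls where the \emph{points} $\m_\xi(f_\eta)$ sit (within bounded distance of a translate of $\pi^\|\bigl(S\cap(\{0\}\times\R^d)\bigr)$, which indeed has dimension $\le d-1$ by your count), but the lemma as stated is about the span $E_f$ of the entire sublattice $\pi^\|(\Gamma_f)$, and that span is not controlled by where the finitely many face points lie. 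Note finally that the missing cocompactness is the very property the paper's proof invokes when it declares $(\Gamma_\xi(f),H_\xi(f),E_\xi(f))$ to be a cut \& project scheme and cites relative denseness of $\m_\xi(V)$ in $E_\xi(f)$: you have correctly isolated the crux on which both arguments turn, but isolating it is not proving it, and as written your proof does not go through.
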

\begin{proof}
We may assume that $H(f)$ is non-trivial.
Let $w$ be a bounded open subset of $H(f)$.
Then $\m_0(w)$ 
is the projection set with window $w$ and parameter $0$
for the cut \& project scheme
$({\pi_{\Gamma}^\perp}^{-1}(H(f)\cap\Gamma^\perp),H(f),E(f))$. Since $w$ is open 
$\m_0(w)$ is relatively dense in $E(f)$ \cite{BaakeGrimm13} and thus has strictly positive lower density in $E(f)$. 

Since $\Gamma^\perp$ is dense in $H$ we can find an open subset
$w\subset H(f)$ and an infinite
subset $\Psi\subset \Gamma^\perp$ such that $w+\psi\subset W$ for
all $\psi\in\Psi$ and such that the sets $w+\psi$ have pairwise empty
intersection. Hence $\m_0(W)$ contains the disjoint union of all
$\m_0(w+\psi)$, $\psi\in\Psi$. Now the lower density of
$\m_0(w+\psi)$ in $E(f)$ is independent of $\psi$. Therefore, if
$E(f)$ has dimension $d$ and hence is all of $\R^d$ than the lower
density of $\m_0(W)$ must be infinite, which is a contradiction.
\end{proof}

Choose a parameter $\xi$ such that 
$A(f)\cap (\Gamma+\xi)^\perp$ is not empty. Then its closure is of the form 
$\overline{A(f)\cap (\Gamma+\xi)^\perp} = H(f) +
\Delta(f) + h(f)$ where $h(f)\in (\Gamma+\xi)^\perp$. The vector $h(f)$ is of course not uniquely determined by this splitting and we have to make a choice, but this choice can be made independent of the choice of $\xi$. 
Indeed if $A(f)\cap (\Gamma+\xi')^\perp$ is also not empty then $\xi^\perp-{\xi'}^\perp\in H(f)+\Delta(f)$. 
Given that $f$ is bounded, there is a 
finite subset $\Phi(f)\subset \Delta(f)+h(f)$ 
such that, for any choice of parameter $\xi'$ (even if the l.h.s.\ is empty), 
\begin{equation}\label{eq-f} 
f\cap (\Gamma+\xi')^\perp \subset \bigcup_{\eta\in\Phi(f)} H(f) +  \eta.
\end{equation} 
\begin{thm} \label{thm-sms}
Let $M$ be an arbitrary model set with polyhedral window $W$ and parameter $\xi = \pim(M)$.
Then 
$$ M \backslash \m_\xi(Int(W)) \subset \bigcup_{f\in \F(W)}  \bigcup_{\eta\in \Phi(f)}
E(f)+\sigma_\xi^{-1}(\eta).$$
\end{thm}
\begin{proof} By Proposition \ref{prop-nonsing} we have 
$$ M \backslash \m_\xi(Int(W)) \subset \bigcup_{f\in \F(W)} \m_\xi(f) .$$
If $A(f)\cap (\Gamma+\xi)^\perp$ is empty then $\m_\xi(f) = \emptyset$. Otherwise
(\ref{eq-f}) implies that 
$$ \m_\xi(f) \subset \bigcup_{\eta\in\Phi(f)} \m_\xi(f_\eta) +\sigma_\xi^{-1}(\eta).$$
where $f_\eta = (f-\eta) \cap H(f)$ is an
open subset of $H(f)$. Finally,
$$\m_\xi(f_\eta) = \{\pi^\|(x):x\in\Gamma, \pi^\perp(x)\in f_\eta\}\subset \sigma^{-1}(H(f)\cap \Gamma^\perp)\subset E(f).$$
\end{proof}
As a first corollary we obtain a generalization of one direction of Proposition \ref{prop-acc} to
singular model sets.
\begin{cor}\label{cor-acc}
  Let $P$ be a $B$-patch for a model set with window $W$ and
  $M\in\Omega(W)$ a possibly singular model set. We assume that $P$
  and $M$ contain the origin.  Let $x\in M$. If $\sigma_{M} (x) \in
  Int(W_P)$ then $P = (M-x)\cap B$.  In particular $\{\sigma_{M}
  (x): P = (M-x)\cap B\}$ is dense in $W_P$.
\end{cor}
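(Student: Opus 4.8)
The plan is to deduce the implication directly from the sandwich of Proposition~\ref{prop-nonsing}, which for $\xi=\pim(M')$ reads $\m_\xi(Int(W))\subset M'\subset \m_\xi(W)$, and then to read off the density clause as a consequence. Writing $h=\sigma_{M'}(x)$, so that $h\in W$ because $M'\subset\m_\xi(W)$, I would use that for any $w\in\Gamma^\|$ one has $\sigma_\xi(x+w)=h+w^*$, that $h+w^*\in Int(W)$ forces $x+w\in M'$ (lower inclusion), and that $x+w\in M'$ forces $h+w^*\in W$ (upper inclusion). Membership of $x+w$ in $M'$ is thus pinned down except when $h+w^*$ lands on $\partial W$, and the entire difficulty of the singular case is concentrated on that boundary.

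First I would prove $P\subset (M'-x)\cap B$. For $z\in P$ the definition of the acceptance domain gives $W_P^o\subset Int(W)-z^*$; since $W$ is the closure of its interior, taking closures yields $W_P=\overline{W_P^o}\subset W-z^*$ and hence $Int(W_P)\subset Int(W)-z^*$. Therefore $h\in Int(W_P)$ gives $h+z^*\in Int(W)$, so $x+z\in M'$, and as $z\in B$ we obtain $z\in (M'-x)\cap B$. The reverse inclusion is where the boundary ambiguity is actually resolved. Take $y\in (M'-x)\cap B$; then $x+y\in M'$ gives $h+y^*\in W$, and since $y^*=(h+y^*)-h\in W-W$ lies automatically in $\Gamma^\perp$ with $y\in B$, we get $y\in D\cap B=P\sqcup P'$. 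It remains to exclude $y\in P'$. If $y\in P'$ then $W_P^o\subset W^c-y^*$; using $\overline{W^c}=(Int(W))^c$ (again because $W$ is the closure of its interior) this gives $W_P\subset (Int(W))^c-y^*$ and then $Int(W_P)\subset W^c-y^*$. Hence $h\in Int(W_P)$ would force $h+y^*\in W^c$, contradicting $h+y^*\in W$. So $y\in P$, proving $(M'-x)\cap B\subset P$ and therefore $P=(M'-x)\cap B$.

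The hardest point, as indicated above, is that the hypothesis is $h\in Int(W_P)$ rather than $h\in W_P^o$, so one cannot simply argue factor by factor in the defining intersection: for singular $M'$ a star value on $\partial W$ could a priori belong to either $P$ or $P'$. The two elementary identities $\overline{Int(W)}=W$ and $\overline{W^c}=(Int(W))^c$ are exactly what let the one-sided containments $W_P^o\subset Int(W)-z^*$ and $W_P^o\subset W^c-y^*$ be transported to $Int(W_P)$, and this is the crux that makes the singular case go through with the same bookkeeping as the non-singular one.

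Finally, for the density statement I would combine this implication with two facts. The image $\{\sigma_{M'}(x):x\in M'\}$ is dense in $W$ (recorded for model sets in the discussion of the star map), and $Int(W_P)$ is dense in $W_P$ because $W_P=\overline{W_P^o}=\overline{Int(W_P)}$. Consequently the star values landing in the open set $Int(W_P)$ are dense in $Int(W_P)$, hence dense in $W_P$, and by the implication just proved every such $x$ satisfies $P=(M'-x)\cap B$. This exhibits $\{\sigma_{M'}(x):P=(M'-x)\cap B\}$ as a set containing a dense subset of $W_P$, which is what is claimed.
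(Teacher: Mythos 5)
Your proof is correct and takes essentially the same route as the paper's: both rest on the sandwich $\m_\xi(Int(W))\subset M'\subset \m_\xi(W)$ of Proposition~\ref{prop-nonsing} (which is the content of the paper's citation of Theorem~\ref{thm-sms}) to decide membership of $x+z$ in $M'$ away from $\partial W$, and both deduce the density clause from the density of the star image in $W$. The only difference is one of rigor: your factor-by-factor closure/interior argument, giving $Int(W_P)\subset Int(W)-z^*$ for $z\in P$ and $Int(W_P)\subset W^c-y^*$ for $y\in P'$ (equivalently, that $W_P^o$ is a regular open set, so $Int(W_P)=W_P^o$), makes explicit the step the paper compresses into the phrase ``as long as $\sigma_{M'}(0)$ does not lie on the boundary of $W_P$.''
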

\begin{proof} Set $\xi = \pim(M)$.
We see from the description of $M$ given in Theorem \ref{thm-sms} that
the projection set $\m_\xi(Int(W_P))$ is contained in $M$. Hence
condition (\ref{accept}) also applies to a singular $M$,
as long as $\sigma_{M}(0)$ does not lie on the boundary of $W_p$.
Hence if $\sigma_{M}(x) \in Int(W_P)$ then $P = (M-x)\cap B$. 
Denseness of
$\{\sigma_{M} (x): P = (M-x)\cap B\}$ follows directly from that fact that
$\{\sigma_{M} (x): x\in M\}$ is dense in $W$.
\end{proof}
%

For the remainder of this subsection we consider two elements $M_1,M_2$ of the hull of a model set (so with equal cut and project scheme and equal window) which have the same parameter $\xi$. We 
let $$\aaa(M_1,M_2) = \bigcup_{f\in \F_\xi(W)} \bigcup_{\eta\in \Phi(f)}
  E(f)+\sigma_\xi^{-1}(\eta)$$ 
where $\F_\xi(W)\subset \F(W)$ is the set of faces for which $A(f)\cap (\Gamma+\xi)^\perp$ is not empty.  
\begin{cor}
 The difference set $M_1\Delta M_2$
  is contained in $\aaa(M_1,M_2)$. $\aaa(M_1,M_2)$ is
  a finite affine hyperplane arrangement (if non empty).
\end{cor}
\begin{proof}
The first statement follows directly from Theorem \ref{thm-sms}.
  By Lemma~\ref{lem-codim} the sets
  $E(f)+\sigma_\xi^{-1}(\eta)$, $\xi =\pim(M_1)$, are proper affine
  hyperplanes. Indeed, if $\sigma_\xi^{-1}(\eta)$ is not finite then
  it contains a period which must be contained in $E(f)$. 
  
  There
  are only finitely many affine hyperplanes, because $\F(W)$ and $
  \Phi(f)$ are finite.
\end{proof}

The affine hyperplanes making up $\aaa(M_1,M_2)$  depend on the pair $M_1,M_2$.
However, their number is uniformly bounded in the parameter $\xi$. 
\begin{lem}\label{bound-N} 
There is a finite number $N$ such that for all  pairs of model sets $M_1,M_2$
with 
equal parameter, 
the number of hyperplanes in the arrangement $\aaa(M_1,M_2)$ is bounded by $N$.
\end{lem}
\begin{proof}
  There is a finite number of faces and each face $f$ gives rise to a
  set $\Phi(f)$ which can vary with $\xi$, but the number of its
  elements is bounded from above since $f$ is bounded and $\Delta(f)$
  uniformly discrete.
\end{proof}

\begin{prop}\label{rel-dense}
Given $r>0$ there exists a $\rho>0$ such that for all model sets $M_1,M_2$
with 
equal parameter and
all $x\in \R^d$, the ball $B_{\rho}(x)$ contains at least one
point at which $M_1$ and $M_2$ agree out to distance $r$.
\end{prop}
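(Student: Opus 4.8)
The plan is to reduce the statement to an elementary packing estimate, feeding on the structural results just established. First I would dispose of the trivial case: if the common parameter $\xi=\pim(M_1)=\pim(M_2)$ is non-singular, then $\pim$ is injective over $\xi$, so $M_1=M_2=\m_\xi(W)$ and every point of $\R^d$ is a point of agreement, for any $\rho$. Otherwise both $M_1$ and $M_2$ are singular, and the corollary following Theorem~\ref{thm-sms} (on symmetric differences of singular model sets) shows that $M_1\Delta M_2$ is contained in the affine hyperplane arrangement $\aaa(M_1,M_2)$. Crucially, Lemma~\ref{bound-N} bounds the number of hyperplanes making up $\aaa(M_1,M_2)$ by a constant $N$ depending only on the hull $\Omega(W)$, not on the particular pair $(M_1,M_2)$.

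Next I would translate ``agreeing out to distance $r$'' into a geometric avoidance condition. Two sets $M_1$ and $M_2$ agree on the ball $B_r(x')$ exactly when $B_r(x')\cap(M_1\Delta M_2)=\emptyset$, and this certainly holds whenever $x'$ lies at distance greater than $r$ from every hyperplane of $\aaa(M_1,M_2)$. Consequently the ``bad'' set of points $x'$ where agreement to radius $r$ might fail is contained in the union of the $r$-neighborhoods of these hyperplanes, that is, in a union of at most $N$ slabs, each of width $2r$.

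The heart of the argument is a volume comparison showing that such a union of slabs cannot swallow a large ball. Intersecting a single slab of width $2r$ with a ball $B_\rho(x)$ yields a set of volume at most $2r\,\omega_{d-1}\rho^{d-1}$, where $\omega_{d-1}$ is the volume of the unit $(d-1)$-ball, since the cross-section transverse to the slab normal is contained in a $(d-1)$-ball of radius $\rho$. Hence the total bad volume inside $B_\rho(x)$ is at most $2Nr\,\omega_{d-1}\rho^{d-1}$, whereas $\mathrm{Vol}(B_\rho(x))=\omega_d\rho^d$. As soon as $\rho>2Nr\,\omega_{d-1}/\omega_d$ the slabs cannot cover $B_\rho(x)$, so $B_\rho(x)$ contains a point $x'$ of the good set; at such $x'$ the ball $B_r(x')$ avoids $M_1\Delta M_2$, so $M_1$ and $M_2$ agree out to distance $r$ there. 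Choosing $\rho$ just above this threshold produces a value depending only on $r$, $N$, and $d$, hence uniform over all admissible pairs, which is precisely the assertion.

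The step I expect to require the most care is the uniformity bookkeeping rather than any single estimate. One must invoke Lemma~\ref{bound-N} to keep $N$, and therefore $\rho$, independent of $M_1$ and $M_2$; it is worth emphasizing that the packing argument exploits in an essential way that the number of hyperplanes is \emph{bounded}, not merely finite for each fixed pair. One should also check that any lower-dimensional pieces of the arrangement (whose $r$-neighborhoods are thinner than a full slab) are harmlessly absorbed into the same volume bound, so that enlarging every component to the slab around an ambient hyperplane only weakens the estimate in the favorable direction.
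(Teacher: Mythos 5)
Your packing argument is internally coherent, but it proves the proposition in a strictly smaller setting than the one in which it is stated. Every structural input you invoke --- Theorem \ref{thm-sms}, its corollary placing $M_1\Delta M_2$ inside the arrangement $\aaa(M_1,M_2)$, and the uniform bound $N$ of Lemma \ref{bound-N} --- is established in the paper only for \emph{polyhedral} model sets (hypotheses H1 and H2): without a polyhedral window the faces $\F(W)$, the spaces $E_\xi(f)$, and hence the hyperplane arrangement are simply not defined. Proposition \ref{rel-dense}, by contrast, is stated for the hull $\Omega(W)$ of a model set under the paper's standing assumptions alone (window compact and the closure of its interior); Section \ref{examples} exhibits exactly such a model set whose window is fractal, and for it your reduction to finitely many slabs is unavailable. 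So, read at the stated level of generality, your proof has a genuine scope gap. It is a gap of scope rather than of logic: the proposition is only ever applied inside Theorem \ref{PolyhedralTheorem}, where H1 and H2 hold, so your argument would serve for that application. A smaller mismatch: Lemma \ref{prop-R_1} downstream treats the agreement points as elements of $M_1\cap M_2$, which your construction does not supply directly (your good point is a generic point of $\R^d$); this is repairable by running your argument with $r$ enlarged by the relative denseness radius and then sliding to a nearby point of $M_1$, which by agreement also lies in $M_2$.

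The paper's own proof is entirely different and is what makes the general statement true. Fix a patch $(P,B)$ of the hull with $0\in P$ and $B$ containing a ball of radius $r$. Since $0\in P$, the set $W_P^o$ lies in $Int(W)$, so by Proposition \ref{prop-nonsing} the projection set $\m_\xi(W_P^o)$ is a \emph{common subset} of $M_1$ and $M_2$; by Corollary \ref{cor-acc}, around every point of this common set both $M_1$ and $M_2$ carry the same patch $P$, hence agree out to distance $r$. Finally, $\m_\xi(W_P^o)$ is a projection set whose window has nonempty interior, so it is relatively dense with a radius depending only on $W_P^o$ and the scheme, not on the pair $(M_1,M_2)$. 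In short, where you control the difference $M_1\Delta M_2$ (which requires the polyhedral structure theory), the paper exhibits a large common subset (which requires only acceptance domains, available for arbitrary windows) and gets agreement points inside $M_1\cap M_2$ for free. Your volume comparison is an attractive quantitative alternative in the polyhedral case --- it makes $\rho$ explicit in terms of $N$, $r$ and $d$ --- but it cannot replace the paper's argument for general windows.
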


\begin{proof}
  Let $\xi=\pim(M_1)=\pim(M_2)$. Pick a $B$-patch $P$ where $B$
  contains a ball of radius $r$.  Since the interior of the acceptance
  domain $W_P$ is in the interior of $W$, $\m_\xi(W_P^0)$ is a subset
  of both $M_1$ and $M_2$.

  This is a relatively dense subset of
  $M$, 
  so there exists a $\rho$ such that for all $x\in \R^d$ the ball
  $B_{\rho}(x)$ contains at least one point of $\m_\xi(Int(W_P))$, and
  around this point $M_1$ and $M_2$ agree out to distance $r$.
\end{proof}

\subsection{Reprojections} 
In the cut \& project construction described above, 
the projection $\pi^\|: H \times \R^d \to \R^d$ is assumed to be along 
$H$.  If we change the
direction along which we project, 
but otherwise leave the strip $S$ and the parameter $\xi$ fixed, then
this affects the projection set $\m_\xi(W) = \{\pi^\|(x):x\in S\cap
(\Gamma+\xi)\}$ rather mildly.  Let $\pi':H\times\R^d\to \R^d$ be the
projection onto $\R^d$ along another group $H'\subset H\times \R^d$
transverse to $\R^d$.  We call
$$ \m_\xi'(W) = \{\pi'(x):x\in S\cap (\Gamma+\xi)\}$$
the {\em reprojection\/} of $\m_\xi(W)$ along $H'$. 

More generally, if $\Lambda$ is any subset of 
$\pi^\|(\Gamma+\xi)$ 
we call
$$\Lambda' = \{\pi'\circ{\pi^\|_\xi}^{-1}(\lambda):\lambda\in\Lambda\}$$
its reprojection along $H'$. Here $\pi^\|_\xi$ is the
restriction of $\pi^\|$ to $\Gamma+\xi$ which is injective by our
assumption. 

The group $H'$ is the image of $H$ 
under a transformation $g: H \times \R^d \to H \times \R^d$ 
that is the identity on $0 \times \R^d$. (If $H$ is a vector space,
then this is a shear.) 
Since $\pi' = \pi \circ g^{-1}$, 
the reprojection $\m'_\xi(W)$ of $\m_\xi(W)$ 
is itself a model set with internal group $H$, albeit with strip $g^{-1}(S)$,
with lattice $g^{-1}(\Gamma)$, and with parameter $g^{-1}(\xi)$.

\subsection{Pattern equivariant functions}\label{sec-pef}
Let $\Lambda\subset\R^d$ be a FLC-Delone set and $Y$ some set.  A
function $f:\R^d \to Y$ is called {\em \spe\/} if there exists an
$R>0$ (called the radius) such that, whenever $x_1,x_2 \in \R^d$ are
such that $\Lambda - x_1$ and $\Lambda -x_2$ agree exactly on the ball
$B_R(0)$, then $f(x_1)=f(x_2)$. In other words, each function value
$f(x)$ is determined exactly by the pattern of $\Lambda$ in a ball of
radius $R$ around $x$. For most of our purposes we only need functions 
defined on $\Lambda$, or on the $CW$-complex it defines. So we call a
function $\phi: \Lambda \to Y$ \spe\ if there exists an $R>0$ such
that, whenever $x_1,x_2 \in \Lambda$ are such that $\Lambda - x_1$ and
$\Lambda -x_2$ agree exactly on the ball $B_R(0)$, then
$\phi(x_1)=\phi(x_2)$. It can be shown that if $Y$ is a finite
dimensional real vector space then any \spe\ function on $\Lambda$ is
the restriction of a smooth \spe\ function on $\R^d$ \cite{Kel}.

Any locally constant function $\tilde f:\Xi_\Lambda\to Y$ defines a
\spe{} function $f:\Lambda\to Y$ on $\Lambda$ via $f(x) = \tilde f(\Lambda-x)$;
this defines a bijective correspondence.

Now let $Y$ be a metric space.  Any continuous function $\tilde
f:\Omega_\Lambda\to Y$ is uniquely determined by the function
$f:\R^d\to Y$, $f(x) = \tilde f(\Lambda-x)$.  We call a function
$f:\R^d\to Y$ arising in such a way from a continuous function $\tilde
f:\Omega_\Lambda\to Y$ {\em \wpe\/}. Likewise $\phi: \Lambda\to Y$ is
{\em \wpe\/} if it arises in the above way from a corresponding
function $\tilde \phi:\Xi_\Lambda\to Y$. Equivalently we may say that
$\phi: \Lambda\to Y$ is \wpe\ for $\Lambda$ if and only if the function
$\{\Lambda-x : x\in \Lambda\}\ni (\Lambda-x)\mapsto \phi(x)$ is
uniformly continuous in the topology of $\Xi_\Lambda$.  It is not
difficult to see that if $Y=\R^k$ is a finite dimensional vector space
then $\phi$ is \wpe\ if and only if it is the uniform limit of \spe\
functions.

The following is a very important example.  A direct proof not using
the maximal equicontinuous factor map can obtained from Corollary
\ref{cor-W}, see also \cite{BaakeLenz}.

\begin{lem}\label{lem-starwpe}
The star map $\sigma_M:M\to H$ of a model set with metrizable internal group is \wpe.
\end{lem}

\begin{proof}

Recall that the image of $\sigma_M$ lies in $W$. We can hence rewrite (\ref{eq-pistar}) as
$\sigma_M = \imath_H^{-1}\left|_W\right.\circ \pim(M-\cdot)$. 
Since $W\subset H$ is compact 
$\imath_{H}^{-1}\left|_W\right. : [W,0]_\Gamma \to H$ is uniformly continuous. 
Since also $\pim$ is uniformly continuous $\sigma_M$ is \wpe. 
\end{proof}
Let now $Y$ be an abelian group and $\Phi:\Lambda\to Y$. We define 
$\delta\Phi :\Lambda\times\Lambda \to Y$,
\begin{equation}\nonumber
 \delta\Phi(x,y) = \Phi(y)-\Phi(x) 
\end{equation}
and call it the {\em coboundary} of $\Phi$. (In
Section~\ref{sec-cohomology}, we will consider the cohomology of a
CW-complex whose vertex set is $\Lambda$.
In that setting, $\Phi$ is a 0-cochain 
whose coboundary is the restriction of $\delta\Phi$ to a
subset of $ \Lambda\times\Lambda$. This restriction of the domain of
$\delta\Phi$ does not play any role in the sections up to
Section~\ref{sec-cohomology}, so we 
also refer to $\delta\Phi$, with domain $ \Lambda\times\Lambda$, as the
coboundary of $\Phi$.)

A function $\Psi:\Lambda\times\Lambda \to Y$ is called \spe\ if, for
some $R_0$, large enough such that every $R_0/3$-ball contains a point
of $\Lambda$, there exists a radius $R$ such that for all $(x,y)\in
\Lambda\times\Lambda$ with $|y-x|\leq R_0$ the value of $\Psi(x,y)$
depends only on $B_R(0)\cap (\Lambda-x)$ and on $y-x$, i.e.\
$\Psi(x_1,y_1)=\Psi(x_2,y_2)$ whenever $\Lambda-x_1$ and $\Lambda-x_2$ agree
exactly on $B_R(0)$ and $y_2-x_2=y_1-x_1$. This then implies that, for
arbitrary $(x,y)\in \Lambda\times\Lambda$, the value of $\Psi(x,y)$
depends only on $B_{R+|y-x|}(0)\cap (\Lambda-x)$ and $y-x$.

Clearly, if $\Phi:\Lambda\to Y$ is \spe\ then its co-boundary $\delta\Phi$ 
is also \spe.
We are also interested in \wpe\ functions $\Phi:\Lambda\to Y$, 
having values in a metrizable abelian group $Y$, whose coboundaries are 
\spe. 
The star map $\sigma_M$ of a model set with metrizable internal group is a 
key example. By Lemma~\ref{lem-starwpe} $\sigma_M:M\to H$ is \wpe, yet:
\begin{lem}\label{lem-costarspe}
The coboundary of the star map $\sigma_M$ of a model set is \spe.
\end{lem}
\begin{proof} 
Given $x_1,x_2$ in $M$ we have $\delta \sigma_M(x_1,x_2) = \sigma_M(x_2)-\sigma_M(x_1) = (x_2-x_1)^*$, and so depends only on $x_2-x_1$. 
\end{proof}
\begin{remarks}
The above characterisation of the star map of a model set can be understood as a generalization of a result from Boulmezaoud's thesis \cite{B-thesis}. Boulmezaoud
  showed that in the case that the internal group is $H=\R^n$ the star
  map $\R^d\to\R^n$, $x\mapsto x^*$ extends to a \wpe\ smooth function
  whose differential is \spe.  
\end{remarks}

\section{Shape conjugacy and \ns\ generators}
\label{sec-tove}
A shape conjugation is a particular shape deformation in the sense of
\cite{CS2,Kel}, and shape deformations arise as follows:
Consider a function $F:\Lambda\to\R^d$, defined on an FLC Delone set
$\Lambda$.  It defines a new set $$\Lambda^F :=
\{x+F(x):x\in\Lambda\}.$$ 
When the coboundary of $F$ is \spe\ then the elements of $\Lambda^F-\Lambda^F$
can be locally derived from $\Lambda-\Lambda$ and in
particular $\Lambda^F$ has FLC. In this case we call $\Lambda^F$ a
{\em deformation} of $\Lambda$ and 
the function $F$ its {\em generator\/}.

\begin{remarks}
  The term ``deformation of a model set" has been used in the
  literature (see \cite{BaakeLenz,BernuauDuneau}) also for other
  kinds of deformations for which, in particular, the deformed set is
  no longer necessarily FLC. These could be achieved by functions $F$
  whose co-boundaries are not \spe. We have built into the definition of deformation and its generator the requirement that $\delta F$ is \spe\ precisely to guarantee that FLC is preserved under deformation. 
  Without the FLC requirement there
  are many more possible deformations and our rigidity results do not
  apply. 
\end{remarks}
  
\subsection{Shape conjugacy asymptotically negligible generators}  \label{sec-an}
  
 A deformation with generator $F$ is a {\em shape semi-conjugacy\/} if the map
 $\Lambda-x\mapsto \Lambda^F-x$ extends from the orbit of $\Lambda$ in
 $\Omega_\Lambda$ to a topological semi-conjugacy
 $\s_F:\Omega_\Lambda\to\Omega_{\Lambda^F}$. Likewise, 
a deformation for which $\s_F$ is a topological
conjugacy is called a {\em shape conjugacy}.

It turns out that the map $\Lambda-x\mapsto \Lambda^F-x$ extends to a
topological semi-conjugacy $\s_F:\Omega_\Lambda\to\Omega_{\Lambda^F}$
if and only if $F$ is \wpe\ \cite{CS2,Kel}.  Thus, the generator of a
shape semi-conjugacy is a \wpe\ function $F:\Lambda\to\R^d$ whose
coboundary is \spe. We call such generators $F$ {\em asymptotically negligible}, because
they are arbitrarily close to \sP\ generators, and for \sP\ generators 
$\Lambda^F$ can be locally derived from $\Lambda$. 
If $F$ is small enough, then the deformation is invertible in the sense that one can find a generator $G:\Lambda^F\to \R^d$ 
such that $\s_G$ is the inverse of $\s_F$ \cite{Kel}. In this case  $\s_F$ is a shape conjugacy.

Our ultimate aim is to understand the extent to which the dynamical system of a
Delone set determines the Delone set. 
To investigate this question we recall the following theorem. 
\begin{thm}[\cite{KS13}, Theorem 5.1]\label{thm-shape}
  Let $\Lambda$ and $\Lambda'$ be FLC Delone sets that are pointed
  topologically conjugate.  For each $\epsilon >0$ there exists a FLC
  Delone set $\Lambda_\epsilon$ that is MLD with $\Lambda$ and a
  function $F_\epsilon:\Lambda_\epsilon \to \R^d$ whose coboundary is
  \spe\ such that $\Lambda'=\Lambda_\epsilon^{F_\epsilon}$ and
  $\s_{F_\epsilon}:\Omega_{\Lambda_\epsilon} \to \Omega_{\Lambda'}$ is
  a topological conjugacy; 
  in other words $F_\epsilon$ is an asymptotically negligible generator
  of an invertible deformation mapping $\Lambda_\epsilon$ to $\Lambda'$.
  Moreover $\sup_x |F_\epsilon(x)|\leq\epsilon$.
\end{thm}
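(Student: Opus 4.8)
The plan is to read the desired shape deformation directly off the conjugacy. Let $h:\Omega_\Lambda\to\Omega_{\Lambda'}$ be the given pointed conjugacy, so that $h$ is a homeomorphism commuting with the $\R^d$-action and $h(\Lambda)=\Lambda'$. Along the orbit $h$ is the trivial relabelling $h(\Lambda-x)=\Lambda'-x$ for all $x\in\R^d$, and its entire content lies in how it extends continuously to the limit patterns. Since $\Omega_\Lambda$ is compact, both $h$ and $h^{-1}$ are uniformly continuous, and this is the fact I would exploit. The goal is to exhibit $\Lambda'$ as a small shape deformation $\Lambda_\epsilon^{F_\epsilon}$ of an MLD copy $\Lambda_\epsilon$ of $\Lambda$, with $\delta F_\epsilon$ \spe\ and $\sup_x|F_\epsilon(x)|\le\epsilon$.

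First I would convert uniform continuity into a local statement using FLC on both sides. Fixing a large target radius and the tolerance $\epsilon$, I would produce an $R=R(\epsilon)$ such that whenever $x_1,x_2\in\R^d$ have $\Lambda-x_1$ and $\Lambda-x_2$ agreeing \emph{exactly} on $B_R(0)$, the patterns $\Lambda'-x_1$ and $\Lambda'-x_2$ agree exactly on the prescribed ball \emph{up to a translation of size at most} $\epsilon$. Here FLC is essential: it upgrades the metric notion of ``close'' coming from uniform continuity into ``identical on a ball after a small shift,'' and it makes the partition of $\R^d$ by the $R$-patch at each point finite. This is the precise sense in which $h$ is ``an MLD map up to an $\epsilon$-sliding.''

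Next I would manufacture the displacement. For $x\in\Lambda$ I match $x$ to the point of $\Lambda'$ determined by the action of $h$, producing a bijection $x\mapsto x+F(x)$ of $\Lambda$ onto $\Lambda'$ with \emph{bounded} displacement, boundedness following from uniform continuity of $h$ and $h^{-1}$ together with FLC. Because $F$ arises by evaluating the continuous map $h$ on the transversal $\Xi_\Lambda$, it is \wpe. The crucial point is that, although $F$ itself need only be \wpe\ (its \emph{absolute} value depends on fixing a global origin and so may require unbounded information), the \emph{neighbour differences} $F(x)-F(y)$ are governed by the local estimate above and are hence finite-range up to arbitrarily small error. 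I would then pass to a sufficiently fine MLD decoration $\Lambda_\epsilon$ of $\Lambda$, so that this bounded-radius rule becomes \emph{exact}: on $\Lambda_\epsilon$ the matching is a genuine bijection onto $\Lambda'$ and the resulting generator $F_\epsilon$ has \spe\ coboundary. Since $\delta F_\epsilon$ is \spe, $\Lambda_\epsilon^{F_\epsilon}$ has FLC and equals $\Lambda'$, and $\s_{F_\epsilon}$ agrees with the MLD conjugate of $h$ on the dense orbit, hence everywhere, so it is a homeomorphism, i.e.\ a genuine conjugacy and not merely a semi-conjugacy.

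The main obstacle is twofold. First, one must turn the ``agreement up to an $\epsilon$-translation'' supplied by uniform continuity into an exactly local, \spe\ rule for the relative displacements (and into an exact bijective matching onto $\Lambda'$, with no point of $\Lambda'$ missed or doubled, which is where the Delone inner radius versus $\epsilon$ comparison enters). A priori $\delta F$ is only \wpe, and a truly \wpe\ function cannot be made \spe\ by any finite decoration; the content here is that the ambiguity is purely a global-translation ambiguity, so the neighbour differences become honestly finite-range once enough combinatorial data is recorded in $\Lambda_\epsilon$. Second, achieving $\sup_x|F_\epsilon(x)|\le\epsilon$ is delicate: the raw matching gives only bounded, not tiny, displacement. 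I would use the freedom to alter the generator by an \spe\ $\R^d$-valued function --- equivalently, to re-choose point positions within the MLD class of $\Lambda_\epsilon$ --- which fixes the class $[\delta F_\epsilon]$ and hence the deformed space; because $h$ is a conjugacy this class is asymptotically negligible, and the crux is to show that such classes admit representatives of arbitrarily small norm. Controlling the interplay between the decoration radius, which must grow as $\epsilon\to0$, and this size bound is the technical heart of the argument.
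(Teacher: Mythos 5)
Note first that the paper does not prove this statement at all: it is imported verbatim from \cite{KS13} (Theorem 5.1), so your proposal can only be compared with the argument given there. Your two key mechanisms are exactly the right ones and are the engine of that proof: (i) uniform continuity of $h$ and $h^{-1}$ plus FLC yields an $R(\epsilon)$ such that the $R$-patch of $\Lambda$ at a location determines the nearby patch of $\Lambda'$ exactly, up to a single translation of size at most $\epsilon$; and (ii) this ambiguity is a \emph{global} translation of the whole patch, so it cancels in neighbour differences, which is what makes the coboundary \spe{} even though the generator is only \wpe. However, there is a genuine gap in the middle of your construction. You propose to ``match $x\in\Lambda$ to the point of $\Lambda'$ determined by the action of $h$, producing a bijection $x\mapsto x+F(x)$ of $\Lambda$ onto $\Lambda'$ with bounded displacement.'' No such matching is determined by $h$: on the orbit, $h$ is just the relabelling $\Lambda-x\mapsto\Lambda'-x$ and carries no point-to-point correspondence. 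Worse, a bounded-displacement bijection $\Lambda\to\Lambda'$ need not exist at all, because pointed conjugacy does not preserve point density: $\Lambda=\Z$ and $\Lambda'=\Z\cup(\Z+\tfrac13)$ are pointed topologically conjugate (even MLD), yet $\Lambda'$ has twice the density of $\Lambda$, so any bijection between them moves some points arbitrarily far. The same obstruction defeats your repair, since a ``decoration'' of $\Lambda$ has the same points as $\Lambda$ and so still cannot be in bounded-displacement bijection with $\Lambda'$.

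The fix --- and what \cite{KS13} actually does --- is to build $\Lambda_\epsilon$ from $\Lambda'$, not from $\Lambda$: for each $y\in\Lambda'$, the $R(\epsilon)$-patch of $\Lambda$ around the location $y$ determines the position of $y$ up to $\epsilon$; choosing once and for all a standard offset for each of the finitely many translation classes of such patches defines $\Lambda_\epsilon$ as the set of standardized positions. Then $\Lambda_\epsilon$ is locally derived from $\Lambda$, it is tautologically in bijection with $\Lambda'$ (each point having moved by at most $\epsilon$), and $F_\epsilon:=\,$(true position minus standardized position) satisfies $\sup_x|F_\epsilon(x)|\le\epsilon$ \emph{by construction}; the smallness is not a separate difficulty to be repaired afterwards, and what you call the ``technical heart'' largely evaporates. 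Your proposed after-the-fact shrinking is not wrong in principle: replacing $F$ by $F-\psi$ with $\psi$ \spe{} and $\sup|F-\psi|\le\epsilon$ is possible precisely because \wpe{} functions are uniform limits of \spe{} ones (stated in Section 2 of the paper), so this is far from being a ``crux''; but it changes the base set by the \spe{} deformation $\psi$, and one must then re-establish local derivability in both directions. Finally, note that the MLD claim (not merely local derivability of $\Lambda_\epsilon$ from $\Lambda$) requires running the symmetric argument through $h^{-1}$; you invoke uniform continuity of $h^{-1}$ for boundedness but never for this, which is where it is actually needed.
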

A specific example of a shape conjugacy for model sets is a reprojection. The goal of this work is to show that for model sets satisfying (H1) and (H2) all shape conjugacies are, up to MLD transformations, of that form.
\begin{cor} \label{cor-repr}
Consider a model set $M$ with metrizable
  internal group $H$. Let
  $L:H\to \R^d$ be a continuous group homomorphism. Then
  $F_L:M\to\R^d$, $F_L(x) = L(\sigma_M(x))$ is a generator of a shape
  conjugation. The model set $M^{F_L}$ resulting from the shape change
  is the reprojection along $H' = \{(h,- L(h))\in H\times\R^d: h\in
  H\}$. 
\end{cor}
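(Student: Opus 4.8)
The plan is to separate the two assertions: first that $F_L$ is a legitimate generator of a shape semi-conjugacy, and second that the resulting set $M^{F_L}$ is exactly the stated reprojection. The upgrade from semi-conjugacy to conjugacy will then come from the reversibility of the reprojection, and this is where I expect the only real subtlety to lie.

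First I would verify that $F_L$ is \wpe\ with \spe\ coboundary, which is precisely the condition identified above for a generator of a shape semi-conjugacy. This is immediate from the preceding Proposition. Since $\sigma_M$ is \wpe\ by Lemma~\ref{lem-starwpe}, it factors as $\sigma_M(x)=\tilde\sigma(M-x)$ for a continuous $\tilde\sigma:\Xi_M\to H$; post-composing with the fixed continuous homomorphism $L$ gives $F_L(x)=(L\circ\tilde\sigma)(M-x)$ with $L\circ\tilde\sigma$ continuous, so $F_L$ is \wpe. For the coboundary, additivity of $L$ yields $\delta F_L(e)=L(\sigma_M(\partial e))=L(\delta\sigma_M(e))$ on every edge $e$, and as $\delta\sigma_M$ is \spe\ (shown in the Proposition) and $L$ is a fixed map, $\delta F_L$ is \spe\ as well.

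The heart of the argument is the explicit identification of $M^{F_L}$. Each $x\in M$ lifts uniquely to $(\sigma_M(x),x)\in\Gamma+\xi$, so ${\pi^\|_\xi}^{-1}(x)=(\sigma_M(x),x)$. Projecting onto $\R^d$ along $H'=\{(h,-L(h)):h\in H\}$, I would write $(\sigma_M(x),x)=(h,-L(h))+(0,w)$, which forces $h=\sigma_M(x)$ and $w=x+L(\sigma_M(x))$, whence $\pi'(\sigma_M(x),x)=x+L(\sigma_M(x))=x+F_L(x)$. Thus the reprojection along $H'$ carries each $x\in M$ to exactly its deformed image $x+F_L(x)$, so $M^{F_L}=\{\pi'(\gamma):\gamma\in S\cap(\Gamma+\xi)\}$ is the reprojection of $M$ along $H'$.

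Finally, to see that $\s_{F_L}$ is a conjugacy and not merely a semi-conjugacy, I would produce the inverse geometrically. The shear $\Psi(h,v)=(h,v-L(h))$ of $H\times\R^d$ carries $H$ onto $H'$, fixes $\{0\}\times\R^d$, preserves the strip $S=W\times\R^d$, and satisfies $\pi'=\pi^\|\circ\Psi^{-1}$; hence $M^{F_L}=\pi^\|\big(S\cap(\Psi^{-1}\Gamma+\Psi^{-1}\xi)\big)$ is again a model set, for the scheme $(\Psi^{-1}\Gamma,H,\R^d)$ with the same window $W$. Applying $\Psi$ reprojects back and recovers $M$, so the deformation is invertible and $\s_{F_L}$ is a conjugacy. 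The main obstacle is exactly this last step: since $L$ need not be small, one cannot invoke the ``small enough'' inversion criterion of \cite{Kel}, and invertibility must instead be read off directly from the linear, invertible shear $\Psi$ underlying the reprojection.
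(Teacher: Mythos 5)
Your first three paragraphs are, up to rearrangement, exactly the paper's proof: the paper notes that continuity of $L$ gives weak pattern equivariance of $F_L$, that additivity of $L$ gives strong pattern equivariance of $\delta F_L$, and then writes $x+L(\sigma_M(x))=(\pi^\|+L\circ\pi^\perp)(\sigma_M(x),x)$ and observes that $\ker(\pi^\|+L\circ\pi^\perp)=H'$, so that $\pi^\|+L\circ\pi^\perp$ is the projection along $H'$. Your decomposition $(\sigma_M(x),x)=(h,-L(h))+(0,w)$ is the same computation read in the other direction, so for everything the paper actually proves, your argument agrees with it.

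The difference is your last paragraph, and there a genuine gap appears. You are right that the smallness criterion of \cite{Kel} is unavailable and that the upgrade from semi-conjugacy to conjugacy needs a separate argument; in fact the paper never gives one, since its proof establishes only that $F_L$ generates a shape semi-conjugacy whose image is the reprojection. But your argument assumes, without proof, that $(\Psi^{-1}\Gamma,H,\R^d)$ is again a cut \& project scheme, i.e.\ that $\pi^\|$ is injective on $\Psi^{-1}\Gamma$ --- equivalently, that $\Gamma\cap H'=\{0\}$. Nothing guarantees this for a general continuous homomorphism $L$, and when it fails the back-projection cannot recover $M$: distinct points of $M$ whose lifts differ by a nonzero element of $\Gamma\cap H'$ are sent to the same point of $M^{F_L}$, so information is destroyed. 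Concretely, for the Fibonacci model set with $\Gamma=\Z(1,1)+\Z(\phi',\phi)\subset\R\times\R$ and window an interval of length $\phi>1$, take $L=-\mathrm{id}_\R$, so that $H'$ is the diagonal, which contains the lattice point $(1,1)$. The deformation is $x\mapsto x-x^*$, which maps the Fibonacci set onto (a translate of) the periodic set $\sqrt5\,\Z$; the hull of the latter is a circle, so $\s_{F_L}$ is a semi-conjugacy but certainly not a conjugacy. Thus the step ``applying $\Psi$ reprojects back and recovers $M$'' fails in this degenerate case; it goes through only under the additional hypothesis $\Gamma\cap H'=\{0\}$ (more precisely, that no nonzero element of $\Gamma\cap H'$ has $H$-component in $W-W$), in which case the reverse generator $G(y)=-L(\sigma'(y))$, with $\sigma'$ the star map of the sheared scheme, is \wpe\ by Lemma~\ref{lem-starwpe} and inverts the deformation. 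To be fair, this degeneracy afflicts the corollary as stated, and the paper's own proof sidesteps it only by never addressing invertibility at all; but since you explicitly undertook to prove invertibility, this is the step of your argument that breaks.
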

\begin{proof}
Lemma~\ref{lem-starwpe} and the
continuity of $L$ imply that $F_L$ is \wpe. Additivity of $L$ and Lemma~\ref{lem-costarspe}
imply that $F_L$ has \spe\ coboundary. Finally, 
we have $M^{F_L} =\{x+{{L}}(\sigma_M(x)):x\in M\} =
\{(\pi^\|+{{L}}\circ\pi^\perp)(\sigma_M(x),x) : x\in M \}$. 
Now the elements in the kernel of $\pi^\|+{{L}}\circ\pi^\perp$ have the form $(h, -L(h))$, $h\in H$ and so $\pi^\|+{{L}}\circ\pi^\perp$ is the
projection onto $\R^d$ along the subspace 
$H' = \{(h,- L(h))\in H\times\R^d: h\in  H\}$. 
\end{proof}

\begin{remarks}
Theorem~\ref{thm-shape} is about pointed topological conjugacy. To
classify hulls of Delone sets up to 
topologically conjugacy we need to understand when, given two
elements $\Lambda_0,\Lambda_0'$ of the same hull $\Omega_\Lambda$, the map
$\Lambda_0-x\to\Lambda_0'-x$ is continuous and 
extends to a topological conjugacy. (We call this 
an {\em auto-conjugacy}.)
A first observation to make is that any auto-conjugacy $\varphi:
\Omega_\Lambda \to \Omega_\Lambda$ must
preserve the equivalence relation given by $\pim$, i.e.\ if
$\pim(\Lambda_1)=\pim(\Lambda_2)$ then
$\pim(\varphi(\Lambda_1))=\pim(\varphi(\Lambda_2))$. Furthermore, the
map induced by $\varphi$ on $\Omega_{max}$ must be the rotation by
$\eta:=\pim(\Lambda_2)-\pim(\Lambda_1)$, since this is the only homeomorphism
on $\Omega_{max}$ mapping $\pim(\Lambda_1)-x$
to $\pim(\Lambda_2)-x$ for all $x\in\R^d$.  It follows then that for
model sets the rotation by $\eta$ must leave the set $NS$ of nonsingular points invariant. 
For most choices of the window 
the only translations leaving $NS$ invariant are the elements of $(\{0\}\times \R^d
+\Gamma)/\Gamma$. In such a case 
the orbit of a non-singular model set $\varphi$ is thus given by 
a global translation. By continuity $\varphi$ must then be everywhere equal to
this global translation. 
To conclude, for generic choices of the window
all auto-conjugacies of a model set are global translations and hence in particular MLD transformations.
The particular cases in which this might not be the case are those in which 
$NS$ admits additional symmetries; they are presently under investigation.
\end{remarks}
\subsection{Nonslip generators}

We will introduce a property for generators of shape conjugations
which characterizes those we have discussed above in the context of model sets. We state the definition for
arbitrary \wpe{} functions, but the primary application is for vector-valued functions.
Recall that $F$ being \wpe\ means that there exists a continuous
function $\tilde F:\Xi_\Lambda\to \R^d$ such that $F(x) = \tilde
F(\Lambda-x)$. We denote $\Rmax^\Xi = \Rmax\cap \Xi\times\Xi$.

\begin{dfn}\label{def-repr}
  Let $\Lambda$ be a Meyer set.  A \wpe{} function $F:\Lambda\to Y$ is
  {\em \ns} if there exists an $\epsilon>0$ such that, 
  for all $(\Lambda_1,\Lambda_2)\in \Rmax^\Xi$ we have
  $\tilde F({\Lambda_1}) = \tilde F({\Lambda_2})$ whenever 
$\Lambda_{1}$ and $\Lambda_2$ agree out to a radius of $\epsilon^{-1}$.
We call $R=\epsilon^{-1}$ the \ns\ radius of $F$.
\end{dfn}
A \spe\ function is manifestly \ns. The star map is not \spe, yet

\begin{lem}
The star map of $M$ is \ns.
\end{lem}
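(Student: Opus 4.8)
The plan is to prove something strictly stronger than the \ns{} property of Definition~\ref{def-repr}: that the continuous function $\tilde\sigma:\Xi\to H$ underlying the \wpe{} star map (which exists by Lemma~\ref{lem-starwpe}) factors through the maximal equicontinuous factor map $\pim$. Since $\Rmax$ is by definition precisely the relation of having equal image under $\pim$, this will make $\tilde\sigma$ constant on each $\Rmax^\Xi$-class, so the defining implication holds with no constraint on the radius at all, hence for every $\epsilon>0$.

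First I would record the key identity on the transversal. For any $\Lambda\in\Xi$ the origin lies in $\Lambda$, so with $\xi=\pim(\Lambda)$ the pair $(\sigma_\Lambda(0),0)$ is by construction the lift of $0\in\pi^\|(\Gamma+\xi)$ to $\Gamma+\xi$; thus $\pim(\Lambda) = [\sigma_\Lambda(0),0]_\Gamma = \imath_{H}(\sigma_\Lambda(0))$. Because $\imath_{H}$ is injective, this inverts to
\[
\tilde\sigma(\Lambda) \;=\; \sigma_\Lambda(0) \;=\; \imath_{H}^{-1}\bigl(\pim(\Lambda)\bigr),
\]
which is exactly the restriction of (\ref{eq-pistar}) to $\Xi$. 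I would emphasize that this holds for \emph{every} element of $\Xi$, singular model sets included, since the argument uses only the defining lift property of $\sigma_\Lambda$ and the injectivity of $\imath_H$, and not non-singularity of $\Lambda$.

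Given this, the conclusion is immediate: if $(\Lambda_1,\Lambda_2)\in\Rmax^\Xi$ then $\pim(\Lambda_1)=\pim(\Lambda_2)$, whence $\tilde\sigma(\Lambda_1)=\imath_{H}^{-1}(\pim(\Lambda_1))=\imath_{H}^{-1}(\pim(\Lambda_2))=\tilde\sigma(\Lambda_2)$. No hypothesis that $\Lambda_1$ and $\Lambda_2$ agree out to any radius is used, so the \ns{} condition holds with arbitrary \ns{} radius. There is essentially no analytic obstacle here; the only point requiring care is the claim that $\tilde\sigma=\imath_{H}^{-1}\circ\pim$ is valid on all of $\Xi$ rather than merely on the dense set of non-singular elements, and this is settled by the lift characterization above. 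The conceptual content worth flagging is precisely the contrast drawn in the remark after Definition~\ref{def-repr}: the star map is not \spe{} (its value at a point is not determined by any bounded patch), yet it is \ns{} for the sharpest possible reason, being a genuine function of $\pim$ alone.
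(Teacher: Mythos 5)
Your proposal is correct and follows essentially the same route as the paper: the paper's proof also invokes equation (\ref{eq-pistar}) to conclude that $\sigma_{M_1}(x)=\sigma_{M_2}(x)$ whenever $\pim(M_1)=\pim(M_2)$ and $x\in M_1\cap M_2$, which for pairs in $\Rmax^\Xi$ (taking $x=0$) is exactly your factorization $\tilde\sigma=\imath_H^{-1}\circ\pim$ on $\Xi$, and both arguments conclude the \ns{} property holds for every positive radius since the agreement hypothesis is never used. Your explicit remark that (\ref{eq-pistar}) applies to singular elements of the hull as well is a worthwhile clarification, but it is not a departure from the paper's reasoning.
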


\begin{proof}
Let $M_1,M_2\in\Omega_M$. 
If $\pim(M_1)=\pim(M_2)=\xi$ and $x \in M_1\cap M_2$, then 
equation (\ref{eq-pistar}) implies that
$\sigma_{M_1}(x)=\sigma_{M_2}(x)$. 
Since $\tilde{\sigma_M}(M_i-x) = \sigma_{M_i}(x)$ 
for $i=1,2$ the star map is non-slip
for every positive radius. 
\end{proof}
\begin{cor}
Every generator of a shape conjugation of the form $F_L$ is \ns.
\end{cor}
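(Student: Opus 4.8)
The plan is to exploit the fact that $F_L$ is literally the star map followed by the fixed, pattern-independent homomorphism $L$, and that the \ns{} property established in the preceding lemma is a statement about the representing function of $\sigma_M$ on the transversal alone. First I would record that, since $F_L(x) = L(\sigma_M(x))$ for all $x \in M$, the continuous function $\tilde F_L : \Xi \to \R^d$ representing the \wpe{} function $F_L$ is the composite $\tilde F_L = L \circ \tilde\sigma_M$, where $\tilde\sigma_M : \Xi \to H$ is the representing function of the star map. This is immediate from $\tilde F_L(M - x) = F_L(x) = L(\sigma_M(x)) = L(\tilde\sigma_M(M-x))$ together with the defining relation $\tilde\sigma_M(M-x) = \sigma_M(x)$.

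Next I would invoke the preceding lemma, reading it in its strongest form. It shows not merely that $\sigma_M$ is \ns{} for some radius, but that $\tilde\sigma_M(\Lambda_1) = \tilde\sigma_M(\Lambda_2)$ holds for every pair $(\Lambda_1,\Lambda_2)\in\Rmax^\Xi$, with no constraint on how far the two patterns agree. Indeed, such a pair consists of two patterns containing the origin with $\pim(\Lambda_1) = \pim(\Lambda_2)$, and equation (\ref{eq-pistar}) evaluated at $x=0$ forces their star values at the origin to coincide.

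Applying $L$ to both sides then yields $\tilde F_L(\Lambda_1) = L(\tilde\sigma_M(\Lambda_1)) = L(\tilde\sigma_M(\Lambda_2)) = \tilde F_L(\Lambda_2)$ for every $(\Lambda_1,\Lambda_2)\in\Rmax^\Xi$. Thus the defining equality in Definition~\ref{def-repr} holds with no agreement hypothesis at all, so $F_L$ is \ns{} for every positive radius, and in particular \ns{}.

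There is essentially no obstacle here: the one point to be careful about is conceptual rather than computational, namely that \ns{} is a condition on the representing function over $\Rmax^\Xi$ rather than on $F_L$ pointwise, and that post-composition with the fixed map $L$---which does not see the Delone set at all---cannot destroy the equality $\tilde\sigma_M(\Lambda_1)=\tilde\sigma_M(\Lambda_2)$. Once the identification $\tilde F_L = L\circ\tilde\sigma_M$ is in hand, the statement is an immediate corollary of the star map's \ns{} property.
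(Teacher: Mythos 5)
Your proof is correct and is exactly the argument the paper intends: the corollary is stated there without separate proof because it follows immediately from the preceding lemma, whose proof (equation (\ref{eq-pistar}) applied at $x=0$, which lies in $\Lambda_1\cap\Lambda_2$ since both are in $\Xi$) establishes precisely the strong form you use, namely that $\tilde\sigma_M$ takes equal values on every pair in $\Rmax^\Xi$ with no agreement hypothesis. Post-composing with the fixed continuous homomorphism $L$ preserves this equality, so $F_L$ is \ns{} for every positive radius, just as you argue.
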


\subsection{Nonslip = reprojection}

We have just seen that continuous group homomorphisms $L: H \to \R^d$ define \ns\
generators of shape conjugations, and hence that reprojections are generated
by \ns{} generators. We
next show the converse, that under hypotheses H1 and H$2'$, all \ns{} generators are
essentially of this form.  The following theorem is the main result of
this section, and proves half of Theorem~\ref{NewMainTheorem}.

\begin{thm}\label{mainlemma}
 Let $M$ be a model set satisfying hypotheses H1 and H2$\,{}'$.  
Let $F:M\to\R$ be a \ns\ function whose coboundary $\delta F$ is \spe.
Then $F$ can be written as $F(x) = L(\sigma_M(x)) + \psi(x)$, where $L:H\to\R$ is a
  continuous linear map and  $\psi: M\to\R$ is \spe.
\end{thm}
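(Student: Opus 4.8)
The plan is to work on the hull $\Omega(W)=\Omega_M$ and its transversal $\Xi$, where $F$ is encoded by the continuous function $\tilde F\colon\Xi\to\R$, $\tilde F(\Lambda-x)=F(x)$, and $\sigma_M$ by $\tilde\sigma\colon\Xi\to W$, $\tilde\sigma(\Lambda)=\sigma_\Lambda(0)=\imath_H^{-1}\pim(\Lambda)$. I would fix a radius $R$ larger than the \ns{} radius $\epsilon^{-1}$ of $F$, larger than the \spe{} radius $\rho_0$ of $\delta F$, and (for the last step) large enough for the connectivity claim below. For each radius-$R$ patch $P$ containing the origin let $W_P$ be its acceptance domain; the interiors of the $W_P$ are disjoint and their union is dense in $W$. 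If $h\in Int(W_P)$, then by Corollary~\ref{cor-acc} every $\Lambda\in\Xi$ with $\tilde\sigma(\Lambda)=h$ has $\Lambda\cap B_R(0)=P$, so any two such $\Lambda$ agree out to radius $R\ge\epsilon^{-1}$ and lie in one $\pim$-fibre; the \ns{} property then forces $\tilde F$ to be constant on $\tilde\sigma^{-1}(h)$. Hence $\tilde F$ descends to a function $g$ on $Int(W_P)$, and approximating by nonsingular patterns shows that $g$ is continuous and satisfies $g(\sigma_M(x))=F(x)$ whenever $\sigma_M(x)\in Int(W_P)$.

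Next I would show that $g$ is affine on each $W_P$. Decompose $W_P$ by H$2'$ into finitely many closed convex pieces with nonempty interior and fix such a piece $U$. For $z\in\Gamma^\|$ with $|z|+\rho_0\le R$, the value of $\delta F$ on an edge $x\to x+z$ is determined by the radius-$R$ pattern at $x$; evaluating $g\circ\sigma_M=F$ at both endpoints gives the key relation
\[
g(h+z^*)-g(h)=d(P,z),
\]
valid for $h\in Int(W_P)$ with $h+z^*$ lying in the interior of some acceptance domain, where $d(P,z)$ depends only on $P$ and $z$. Restricting to $h,h+z^*\in U$ and letting $s=z^*$ range over the stars of such short $z$, these increments are mutually consistent, and the continuity of $g$ makes $s\mapsto d(P,z)$ continuous on the subgroup of $\Gamma^\perp$ it generates. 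For $R$ large this subgroup is dense in $H$, so the increments extend to a continuous homomorphism $L_P\colon H\to\R$ (which by H1 annihilates the torsion factor $C$), and $g-L_P$, being continuous and invariant under a dense group of translations of $U$, is constant. Thus $g=L_P+a_P$ on $Int(W_P)$.

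The crux is to show that the slopes $L_P$ are all equal, and I expect this to be the main obstacle: across a wall where the radius-$R$ pattern changes already within distance $\epsilon^{-1}$, the function $g$ genuinely jumps, so continuity cannot be used to match slopes there. Instead I would lean on the cocycle relation above. If there is a short $z$ and a nonempty open set of $h\in Int(W_P)$ with $h+z^*\in Int(W_{P'})$, then $g(h+z^*)-g(h)=d(P,z)$ reads $(L_{P'}-L_P)(h)+L_{P'}(z^*)+a_{P'}-a_P=d(P,z)$; since the right-hand side is constant on an open set and $L_{P'}-L_P$ is linear, $L_{P'}-L_P$ must vanish, so $L_{P'}=L_P$. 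It then suffices to connect any two regions by a chain of such short star-moves. The finiteness of the set of regions, together with the density of $\Gamma^\perp$ in $H$ and the minimality of $\Omega(W)$, guarantees for $R$ large enough that the regions are so connected; hence there is a single continuous linear map $L\colon H\to\R$ with $L_P=L$ for all $P$.

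Finally I would set $\psi:=F-L\circ\sigma_M$. On the set where $\sigma_M(x)\in Int(W_P)$ we have $\psi(x)=g(\sigma_M(x))-L(\sigma_M(x))=a_P$, which depends only on the radius-$R$ patch $P=(\Lambda-x)\cap B_R(0)$ at $x$; a short check using Corollary~\ref{cor-acc} extends this to the remaining (still pattern-determined) points, so $\psi$ is \spe{} of radius $R$. Since $L$ is a continuous linear map and $F=L\circ\sigma_M+\psi$, this is exactly the asserted decomposition. The two ingredients doing the real work are H$2'$, which makes $g$ piecewise affine, and the \ns{} hypothesis, which both makes $g$ well defined and, through the cocycle relation, forces the piecewise slopes to agree.
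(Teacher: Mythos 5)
Your overall architecture parallels the paper's proof (use the \ns{} property and Corollary~\ref{cor-acc} to descend $\tilde F$ to a function $g$ on the interiors of acceptance domains, use H$2'$ and the \spe{} coboundary to make $g$ piecewise affine, match slopes, read off $\psi$), and your wall-crossing mechanism for matching slopes of different regions is a legitimate alternative to the paper's. But there is a genuine gap at the heart of your second paragraph, namely the construction of $L_P$. Because you insist that the increment identity $g(h+z^*)-g(h)=d(P,z)$ come only from displacements $z$ with $|z|+\rho_0\le R$, you obtain increments on a \emph{finite} set of stars (the set $D$ of admissible displacements is uniformly discrete, so $D\cap B_{R-\rho_0}$ is finite), and this set is not dense in any neighborhood of $0$. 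The two assertions you then make are exactly the unproven crux. (i) ``The increments are mutually consistent'': consistency means that $\sum_i n_i z_i^*=0$ forces $\sum_i n_i\, d(P,z_i)=0$, and the natural telescoping proof requires a chain of moves that never leaves the bounded convex piece $U$, which fails outright when $\sum_i|n_i|$ is large; repairing it needs something like a Steinitz rearrangement argument together with stars small compared to the inradius of $U$. (ii) ``For $R$ large this subgroup is dense in $H$'': the union over all $R$ of the subgroups $\langle\sigma(D\cap B_R)\rangle$ is dense, but an increasing union of non-dense subgroups can be dense (compare $\bigcup_k \frac{1}{k!}\Z=\Q$ in $\R$), so density at some \emph{finite} $R$ is not automatic; it is true, but requires an extra idea such as the ascending chain condition for subgroups of the finitely generated group $\Gamma^\|$. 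The paper avoids all of this by working with \emph{return vectors} of a single large patch containing a designated subpatch for every sector: these have unbounded length, so their stars are genuinely dense in $W_P-W_P$; the constancy of the increments $f^\alpha(u+y^*)-f^\alpha(u)$ is obtained not from patch-determinacy (which fails for long $y$) but from local constancy plus connectedness of the convex sets $W^\alpha\cap(W^\alpha-y^*)$; and the same big-patch computation simultaneously yields the cross-sector slope matching. Some substitute for this use of arbitrarily long return vectors is precisely what your write-up is missing.

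There is a second, smaller but genuine, gap in your last paragraph: you claim $\psi(x)=a_P$ ``depends only on the radius-$R$ patch $P$''. What your argument actually yields is that $\psi$ is constant on each convex piece, hence (by continuity of $g$ on $Int(W_P)$) on each connected component of $Int(W_P)$; but acceptance domains need not be connected, and $g$ --- hence $\psi$ --- may take different constant values on different components of the same $W_P$ (your own remark that $g$ ``genuinely jumps'' across walls applies equally here). So $\psi$ is not visibly determined by $P$, and \spe{}-ness does not follow at radius $R$. The paper closes exactly this hole: distinct components of one acceptance domain are separated by a positive distance in $W$, hence are distinguished by central patches of some larger but uniform radius, and $\psi$ is \spe{} for that radius; your proof needs the same supplement. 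By contrast, the connectivity claim in your third paragraph is fine and can be proved as you suggest: chains of short moves inside a \emph{nonsingular} element of the hull work, because nonsingular stars avoid the boundaries of all acceptance domains, so consecutive stars lie in interiors of regions and each step produces the required nonempty open set.
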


\begin{proof}

To view the \ns{} property from a different angle we consider an
inverse limit construction for the  canonical transversal
$\Xi$.
Let $M_1\sim_\epsilon M_2$ if
$\pim(M_1)=\pim(M_2)$ and the patterns  $M_1$, $M_2$ agree out to radius
$\epsilon^{-1}$.  This is the intersection of two closed equivalence
relations and therefore itself a closed equivalence relation; the
quotient space $\Xi/\sim_\epsilon$ is a compact Hausdorff
space. Denote by $\pi_\epsilon:\Xi\to\Xi/\sim\epsilon$ the canonical
projection. Then
$$ \Xi = \lim_{0 \leftarrow \epsilon} \Xi/\sim_\epsilon$$
and a \wP\ $F$ is \ns\ iff $\tilde F$ is the pullback by $\pi_\epsilon$ of
a continuous function $\tilde F_\epsilon$ on some approximant $\Xi/\sim_\epsilon$. 

Let $F:M\to\R$ be a \ns, \wpe\ function, whose coboundary $\delta F$ is \spe.
Since the star map $\sigma_M$ is also \ns,
there exists an $\epsilon>0$ such that we have the commutative diagram
$$
\begin{array}{rcccc}  
\Xi & \stackrel{\pi_\epsilon}{\to}& \Xi/\sim_\epsilon &\stackrel{\widetilde{\sigma_M}^\epsilon}\to & H \\ 
\tilde F \downarrow & &  \tilde F^\epsilon \downarrow & & \\
\R & = &\R & = &\R 
\end{array}
$$
where 
the continuous maps $\tilde F$ and $\widetilde{\sigma_M}$ are
induced by $F$ and $\sigma_M$. We need to show
that there exists a $0<\eta\leq\epsilon$, a \spe{} function $\psi$ and a continuous group
homomorphism $L$ such that the right side of the diagram can be
completed to a commutative diagram
$$
\begin{array}{rcl}  
\Xi/\sim_\eta &\stackrel{\widetilde{\sigma_M}^\eta}\to & H \\ 
\tilde F^\eta-\tilde\psi^\eta \downarrow & & \downarrow L\\
\R & = &\R 
\end{array}
$$
($\eta^{-1}$ is hence at least as large as the \spe\ radius of $\psi$ and the \ns\ radius of $F$).
Given that a global translation of a point set is an MLD transformation which may be absorbed in the definition of $\psi$ we may assume that $M$ contains the origin.

By FLC there are a finite number of possible $\epsilon^{-1}$-patches
at $0$ which contain the origin. By H$2'$ the acceptance domain of each
$\epsilon^{-1}$-patch can be written as a finite union of closed
convex sets that have non-empty interior.  We call these convex sets
{\em sectors\/} and let $I$ index the sectors of all
$\epsilon^{-1}$-patches.  Thus $\alpha \in I$ denotes both a patch
(located for reference at the origin) and a closed convex subset
$W^\alpha \subset W$ of non-empty interior.  For each such $\alpha$,
let $\Xi^\alpha$ be the corresponding subset of $\Xi$, i.e.\ the set
of point patterns that (a) contain the origin, (b) have the correct
$\epsilon^{-1}$-patch around the origin, and (c) are mapped to sector
$W^\alpha$ by $\widetilde{\sigma_M}$.

The crucial observation is that on $\Xi^\alpha$ the equivalence
relation $\sim_\epsilon$ coincides with the relation $\Rmax$, 
and hence for $M_1,M_2\in \Xi^\alpha$ we have
$M_1\sim_\epsilon M_2$ iff $\widetilde{\sigma_M}(M_1) =
\widetilde{\sigma_M}(M_2)$. In other words, the restriction of
$\widetilde{\sigma_M}^\epsilon$ to $\Xi^\alpha/\sim_\epsilon$ is a
homeomorphism between $\Xi^\alpha/\sim_\epsilon$ and $W^\alpha$.  It
follows that for each $\alpha$ there is a unique function
$f^\alpha:W^\alpha\to\R$ such that
\begin{equation}\label{eq-com}
\begin{array}{rcccl}  
\Xi^\alpha & \stackrel{\pi_\epsilon}{\to}&\Xi^\alpha/\sim_\epsilon &\stackrel{\widetilde{\sigma_M}^\epsilon}\to & W^\alpha \\ 
\tilde F\downarrow & & \tilde F^\epsilon \downarrow & & \downarrow f^\alpha\\
\R& = & \R & = &\R 
\end{array}
\end{equation}
commutes.

Let $y\in D = \sigma^{-1}((W-W)\cap\Gamma^\perp)$ 
be a possible displacement between two points in the same pattern.  
Define $$\Delta_y\tilde F:\Xi \cap (\Xi +y) \to \R$$ 
by
$$\Delta_y\tilde F(M') = \tilde F(M'-y) -\tilde F(M').$$ 
We consider also the
restriction $\Delta_y^{\alpha,\alpha'}\tilde F$ of $\Delta_y\tilde F$
to $\Xi^\alpha\cap (\Xi^{\alpha'}+y)$. Taking into account the fact that
$\widetilde{\sigma_M}(M'-y)- \widetilde{\sigma_M}(M')
=\sigma_{M'}(y) - \sigma_{M'}(0) = y^*$, the preceding
paragraph shows that
$$ \Delta_y^{\alpha,\alpha'}\tilde F(M') = 
\Delta_{y^*}^{\alpha,\alpha'} f(\widetilde{\sigma_M}(M'))
$$
where  $\Delta_{v}^{\alpha,\alpha'} f(u)=f^{\alpha'}(u+v) -   f^\alpha(u)$. 

Since $F$ has \spe{} coboundary, and since \spe{} functions are
locally constant on $\Xi$, $\Delta_y \tilde F$ is locally constant on
$\Xi \cap (\Xi + y)$. Hence by (\ref{eq-com}) the function
$\Delta_{y^*}^{\alpha,\alpha'} f$ is locally constant on $W^\alpha\cap
(W^{\alpha'}-y^*)$. Since $W^\alpha$ and $W^{\alpha'}$ are convex,
$W^\alpha\cap (W^{\alpha'}-y^*)$ is connected or empty. Hence the
function $\Delta_{y^*}^{\alpha,\alpha'} f$ is actually constant on
$W^\alpha\cap (W^{\alpha'}-y^*)$. 

There are finitely many sectors and each sector has non-empty
interior.  So there is an open neighborhood $U\subset H$ of $0$ such
that for all $\alpha$ and all $v\in U$ we have $W^\alpha\cap
(W^{\alpha}-v)\neq\emptyset$.  We claim that for $v\in U$ the value of
$\Delta_{v}^{\alpha,\alpha} f(u)=f^{\alpha}(u+v) - f^\alpha(u)$ is
independent of $\alpha$ as well as independent of $u\in W^\alpha\cap
(W^{\alpha}-v)$ (we know already that it is independent of $u$ if $v=y^*$ with $y\in D$).

The sectors have non-empty interior, so by Corollary \ref{cor-W} for each
sector $\alpha$ we can find a patch $P^\alpha$ of $M$ whose acceptance
domain is contained in the interior of $W^\alpha$.  By repetitivity,
there is a radius such that every ball of that radius contains at
least one copy of each patch $P^\alpha$. Let $P$ be a patch of $M$ of
that radius, so that $P$ contains translates of all the patches
$P^\alpha$.  That is, for each $\alpha$ there is $x^\alpha\in \R^d$ such that
$P^\alpha+x^\alpha$ is a subpatch of $P$. It follows that
$\widetilde{\sigma_M}(M-x^\alpha)$ lies in the interior of $W^\alpha$.
By Corollary \ref{cor-acc} the set $\{\sigma_M(x) : P=(M-x)\cap B\}$ is
dense in the acceptance domain $W_P$.  If $P=(M-x_1)\cap B=(M-x_2)\cap
B$ we call $x_2-x_1$ a {\em return vector\/} of $P$.  The possible
values of $y^*$ for return vectors $y$ of $P$ are thus dense in
$W_P-W_P$.
 
Pick a return vector $y$ of $P$. 
We then have
$$\Delta_y \tilde F(M - x^\alpha) - \Delta_y \tilde F(M - x^{\alpha'})
= \tilde F(M - x^\alpha)-\tilde F(M-x^{\alpha'}) - (\tilde F(M -y-
x^{\alpha})-\tilde F(M-y-x^{\alpha'})) .$$ Now $\tilde F(M -
x^\alpha)-\tilde F(M-x^{\alpha'})$ is obtained by adding the $\delta
F(e_i)$ over the edges $e_i$ along a path in $P$ which joins
$x^\alpha$ to $x^{\alpha'}$, while $F(M-y-x^\alpha) -
F(M-y-x^{\alpha'})$ is obtained by summing the values of $\delta
F(e_i)$ over the corresponding path in $P+y$. Since $y$ is a return
vector to $P$ and $\delta F$ is \spe, the result is the same. Hence
$$\Delta_{y^*}^{\alpha, \alpha}f(\widetilde{\sigma_M}(M-x^\alpha))
- \Delta_{y^*}^{\alpha',
  \alpha'}f(\widetilde{\sigma_M}(M-x^{\alpha'})) = \Delta_y \tilde F(M
- x^\alpha) - \Delta_y \tilde F(M - x^{\alpha'}) = 0.$$ To summarize,
we have established that for all $y^*\in (W_P-W_P)\cap \Gamma^\perp$
the value of $\Delta_{y^*}^{\alpha, \alpha}f(u)$ is the same for all
$\alpha$ and all $u\in W^\alpha\cap (W^\alpha-y^*)$. Moreover, for
fixed $u$ in the interior of $W^\alpha\cap (W^\alpha-y^*)$, the
function $v\mapsto \Delta_{v}^{\alpha, \alpha}f(u)$ is continuous in a
neighborhood of $y^*$.  It follows that $\Delta_{v}^{\alpha, \alpha}
f(u)$ is independent of $\alpha$ and $u\in W^\alpha\cap (W^\alpha-v)$
for all $v \in W_P-W_P$.

Let $\tilde U = (W_P^o - W_P^o)\cap U$.  $\tilde U$ is an open 
neighborhood of the identity in $H$.  
We define $L:\tilde U\to \R$ such that $L(v)$ is the constant value that the
function $\Delta_v^{\alpha,\alpha} f$ takes on
$W^\alpha\cap(W^\alpha-v)$. 
We saw that $L$ is continuous. We claim that $L$ is additive where
sums are defined. Indeed, if $u_1,u_2,u_1+u_2\in U$ then there is $u\in W^\alpha$ such that
also $u+u_1$ and $u+u_1+u_2$ lie in $W^\alpha$.
It follows that, for all  $u\in \tilde U$
$$ L(u_1+u_2) =
f^{\alpha}(u+u_1+u_2) - f^\alpha(u+u_1) + f^\alpha(u+u_1) - f^\alpha(u) = L(u_2)+L(u_1).
$$
A continuous additive function on a neighborhood of the origin is
necessarily linear. That is, $L$  
equals its derivative. 
We may then extend $L$ to a group homomorphism on
the group generated by $\tilde U$ and thus obtain a linear function $L:H\to\R$.

Now let 
$\psi(x) = F(x) - L(\sigma_M(x))$ for $x \in M$. 
This then defines a function $\tilde\psi:\Xi \to \R$,
$\tilde \psi = \tilde F - L\circ\widetilde{\sigma_M}$ which is again
continuous (on $\Xi$).  
If $\widetilde{\sigma_M}(M-x) = \widetilde{\sigma_M}(M-y)\in W^\alpha$ then 
$$\psi(y)-\psi(x) = \Delta_{y-x}^\alpha \tilde F(M-x) - L((y-x)^*) = 0$$
by the construction above and so $\tilde \psi$ is a continuous
function which is constant on $\Xi^\alpha$. Moreover, if $W^\alpha\cap
W^\beta\neq\emptyset$ then continuity implies that $\tilde \psi$ takes
the same value on  
$\Xi^\alpha$ and $\Xi^\beta$. It follows that $\tilde\psi$ is constant on the  
the pre-images 
under $\widetilde{\sigma_M}$ of the connected components of the
acceptance domains of the $\epsilon^{-1}$-patches at $0$. However, different
components of the same central $\epsilon^{-1}$-patch are separated by  
a nonzero distance in $W$, and so 
can be distinguished by the $R$-patches at $0$ for some (possibly large) fixed $R>0$.
Different central patches are distinguished by their
patterns out to distance $\epsilon^{-1}$. Thus $\psi(x)$ is in fact
\spe\ with radius $\eta^{-1}$ where $\eta = \min(R^{-1},\epsilon)$.    
\end{proof}

\begin{cor} If $F$ is a \ns\ generator of a shape conjugation  
for a model set $M$ satisfying H1 and H2$\,{}'$ then, up to MLD transformations, 
$M^F$ is a reprojection of $M$. In particular $M^F$ is a model set.
\end{cor}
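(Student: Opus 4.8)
The plan is to reduce the $\R^d$-valued claim to the scalar statement of Theorem~\ref{mainlemma} and then to show that the leftover \spe{} part merely implements an MLD transformation. First I would treat $F:M\to\R^d$ coordinatewise. Each coordinate $F_i:M\to\R$ inherits from $F$ the properties of being \wpe, being \ns, and having \spe{} coboundary, so Theorem~\ref{mainlemma} applies to each $F_i$ and yields $F_i(x)=L_i(\sigma_M(x))+\psi_i(x)$ with $L_i:H\to\R$ continuous linear and $\psi_i:M\to\R$ \spe. Reassembling the coordinates gives
$$F(x)=L(\sigma_M(x))+\psi(x),$$
where $L:H\to\R^d$ is continuous linear and $\psi:M\to\R^d$ is \spe.

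Next I would identify the reprojection. Setting $F_L:=L\circ\sigma_M$, Corollary~\ref{cor-repr} (which is already stated for $\R^d$-valued $L$) shows that $F_L$ is a generator of a shape conjugation and that the deformed set $M':=M^{F_L}$ is exactly the reprojection of $M$ along $H'=\{(h,-L(h)):h\in H\}$; in particular $M'$ is again a model set. It then remains only to compare $M^F$ with $M'$.

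Every point $y\in M'$ is uniquely of the form $y=x+F_L(x)$ for some $x\in M$, and $M^F=\{\,y+\psi'(y):y\in M'\,\}$ where $\psi'(y):=\psi(x)$; thus $M^F=(M')^{\psi'}$. I claim that $\psi'$ is \spe{} for $M'$. Because $F_L$ generates a shape \emph{conjugacy}, both $F_L$ and its inverse generator have \spe{} coboundary, so the patch of $M'$ around any of its points $y$ determines, and is determined by, the patch of $M$ around the corresponding point $x$, i.e.\ the unique $x\in M$ with $y=x+F_L(x)$. Since $\psi(x)$ is determined by a bounded patch of $M$ at $x$, it follows that $\psi'(y)$ is determined by a bounded patch of $M'$ at $y$; that is, $\psi'$ is \spe{} for $M'$. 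Hence $M^F$ is locally derived from $M'$, and the same argument applied to the inverse deformation shows that $M'$ is locally derived from $M^F$. Therefore $M^F$ is MLD to $M'=M^{F_L}$, which is a reprojection of $M$ and in particular a model set, proving the corollary.

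The main obstacle is this last transport of the \spe{} property: one must argue that $\psi'$, now regarded as a function on the reprojected set $M'$ rather than on $M$, is still \spe. This is exactly the point where invertibility is used --- since $F_L$ generates a genuine shape conjugacy rather than merely a semi-conjugacy, its inverse generator also has \spe{} coboundary, and so bounded patches at corresponding points of $M$ and $M'$ determine one another. This is what both makes $\psi'$ \spe{} and upgrades the one-sided local derivation to full mutual local derivability.
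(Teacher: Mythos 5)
Your proposal is correct and takes essentially the same route as the paper: the paper's own proof consists precisely of applying Theorem~\ref{mainlemma} to vector-valued functions (your coordinatewise reduction) together with Corollary~\ref{cor-repr}, and then asserting that the \spe{} remainder $\psi$ contributes only an MLD transformation. Your additional work --- transporting $\psi$ to an \spe{} function $\psi'$ on the reprojected set $M^{F_L}$ so that $M^F=(M^{F_L})^{\psi'}$ is MLD to $M^{F_L}$ --- just fills in details that the paper leaves implicit.
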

\begin{proof}
  Theorem \ref{mainlemma} applied to vector valued functions and
  Corollary \ref{cor-repr} imply that $F$ is the generator of a
  reprojection plus a \spe{} function. Hence, up to an MLD
  transformation, $M^{F}$ is a reprojection of $M$, and is a model
  set.
\end{proof}

\section{Asymptotically negligible = \ns}\label{brillig}

We now turn to the question of when an asymptotically negligible cocycle is \ns. Here we need the stronger assumption H2.

\begin{thm}\label{PolyhedralTheorem}
Let $M$ be a model set satisfying assumptions H1 and H2, and let $F:M\to \R$ be a weakly pattern equivariant function whose coboundary is strongly pattern equivariant. Then $F$ is nonslip.
\end{thm}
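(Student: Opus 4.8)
The plan is to fix the two patterns $M_1,M_2\in\Xi$ with $\pim(M_1)=\pim(M_2)=\xi$ and $0\in M_1\cap M_2$, and to produce a single radius $R$ (depending only on the hull and on $F$) so that agreement of $M_1$ and $M_2$ on $B_R(0)$ forces $\tilde F(M_1)=\tilde F(M_2)$. Writing $F_{M_i}(x)=\tilde F(M_i-x)$, I would study the difference $D(x):=F_{M_1}(x)-F_{M_2}(x)$, which is defined on the common points $x\in M_1\cap M_2$ and whose value at the origin is exactly the quantity $\tilde F(M_1)-\tilde F(M_2)$ that we must kill. Since $\delta F$ is \spe\ with some radius $R_0$, on any edge $e$ between common points around which $M_1$ and $M_2$ agree to radius $R_0$ one has $\delta F_{M_1}(e)=\delta F_{M_2}(e)$, so $D$ is constant along such edges; thus $D$ is locally constant on the part of the pattern lying outside the $R_0$-neighborhood of the symmetric difference $M_1\triangle M_2$.

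The structural input is that, by the corollary to Theorem \ref{thm-sms} and Lemma \ref{bound-N}, $M_1\triangle M_2$ lies on an arrangement $\aaa(M_1,M_2)$ of at most $N$ affine hyperplanes, and by (the proof of) Lemma \ref{lem-codim} the difference points are relatively dense, with some uniform gap $R'$, on each hyperplane that actually occurs. I would first use this to guarantee that the origin is \emph{deep}: if $M_1$ and $M_2$ agree on $B_R(0)$ then no hyperplane of $\aaa(M_1,M_2)$ can meet $B_{R-R'}(0)$, since a nearby hyperplane would carry a difference point inside $B_R(0)$. Hence, once $R>R'+R_0$, the ball $B_{R-R'-R_0}(0)$ lies outside the $R_0$-neighborhood of the arrangement, the origin lies in a single ``chamber'' $C_0$ of this thickened complement, and $D(0)$ equals the constant value $a_{C_0}$ of $D$ on $C_0$.

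It remains to show $a_{C_0}=0$. Here I would introduce the jump, or \emph{slip}, of $D$ across each face of each hyperplane $\ell\in\aaa(M_1,M_2)$: crossing into an adjacent chamber changes $D$ by the flux across that face of the closed \spe\ $1$-cochain $\Delta:=\delta F_{M_1}-\delta F_{M_2}$, which is supported within $R_0$ of $\ell$. On an \emph{unbounded} face, far from all the finitely many mutual intersections, this slip must vanish: by Proposition \ref{rel-dense}, applied with an agreement radius $r$, one finds common points $z^-,z^+$ straddling the face, each at distance $>r$ from every hyperplane, so that $M_1$ and $M_2$ agree on $B_r(z^{\pm})$; weak pattern equivariance (uniform continuity of $\tilde F$) then gives $|D(z^{\pm})|\le\omega(r)$, while $D$ is locally constant up to the single crossing, so the slip is at most $2\omega(r)$, and letting $r\to\infty$ forces it to be $0$. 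Since for the fixed pair $M_1,M_2$ relative density also supplies common agreement points of arbitrarily large radius inside any unbounded chamber, $D$ tends to $0$ along such chambers, so every unbounded chamber carries value $0$.

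The hard part is to transport this vanishing from the unbounded chambers to $C_0$, i.e.\ to rule out a nonzero slip across the \emph{bounded} faces near the core of the arrangement, where the straddling argument is unavailable because every neighborhood of such a face meets several hyperplanes. For this I would exploit that $\Delta$ is closed: around each codimension-two intersection locus the fluxes through the incident faces sum to zero, which lets me propagate the vanishing inward from the outer, unbounded faces through the face poset of the finite arrangement, and thus conclude $a_{C_0}=0$, whence $D(0)=0$. This conservation step, together with the relative density of difference points on each hyperplane, is exactly where the full polyhedral hypothesis H2 (rather than merely H$2'$) is needed, and I expect it to be the main technical obstacle; the remaining bookkeeping---that $R$ can be chosen uniformly over the hull using the bound $N$ of Lemma \ref{bound-N} and the $\rho(r)$ of Proposition \ref{rel-dense}---is routine.
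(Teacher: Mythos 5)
You have correctly located the difficulty, but the mechanism you propose for it cannot work, so there is a genuine gap. The ``conservation'' constraint you invoke is vacuous: the slip across a wall separating chambers $C$ and $C'$ is, by construction, the difference $a_{C'}-a_C$ of the chamber constants, so the signed sum of slips around any codimension-two stratum telescopes to zero identically, for \emph{every} assignment of constants to chambers. In particular, the assignment that is zero on all unbounded chambers and an arbitrary nonzero constant on some bounded chamber satisfies all of your identities; already for three lines in $\R^2$ bounding a triangle $T$, each loop around a vertex enters and exits $T$ exactly once, so $a_T$ cancels from every identity and nothing forces $a_T=0$. Hence closedness of $\Delta$ cannot propagate vanishing inward from the unbounded faces, and the step you defer is essentially the whole theorem. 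Two subsidiary steps are also unjustified: (i) your ``deep origin'' step needs difference points to be relatively dense, with a gap uniform over all pairs, on each hyperplane of $\aaa(M_1,M_2)$; but Theorem \ref{thm-sms} gives only the containment of $M_1\Delta M_2$ in the arrangement, and Lemma \ref{lem-codim} gives relative density of the points of $\Gamma+\xi$ projecting into a face, not of the points where $M_1$ and $M_2$ actually differ. (ii) ``Every unbounded chamber carries the value $0$'' fails for thin unbounded chambers, which genuinely occur since the arrangement typically contains several parallel translates $E_\xi(f)+\sigma_\xi^{-1}(\eta)$, $\eta\in\Phi_{\xi}(f)$, of the same subspace: in such a chamber every point stays within bounded distance of the hyperplanes, so weak pattern equivariance bounds $|a_C|$ only by a fixed modulus of continuity, not by zero.

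The idea your proposal is missing is the paper's quantization argument, which avoids chamber geometry altogether. Since $\delta F$ is \spe{} and $M$ has FLC, the increments $F_2(y)-F_1(y)-\big(F_2(x)-F_1(x)\big)$ over doubly pointed double $R_1$-patches form a finite set $D_{R_1}$. One then chooses a cone about a direction $\nu$ transversal to \emph{all} the subspaces $E_f$ (this is exactly what defeats the thin-chamber problem), so that the cone based at $x$ meets each of the at most $N$ hyperplanes of $\aaa(M_1,M_2)$ at most once, and walks to infinity along common agreement points supplied by Proposition \ref{rel-dense}. In the resulting telescoping sum for $F_2(x)-F_1(x)$ at most $\tilde N$ terms are nonzero, and the tail tends to $0$ by weak pattern equivariance, so $F_2(x)-F_1(x)$ lies in the fixed finite set $\tilde D_{R_1}$ for every pair and every common point (Lemma \ref{prop-R_1}). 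Uniform continuity of $\tilde F$ then yields a radius $R$ such that agreement on $B_R$ forces $|F_2(x)-F_1(x)|$ below the smallest nonzero element of $\tilde D_{R_1}$, hence to vanish. Note that this argument proves vanishing only at points with a large agreement radius, not on every chamber of the arrangement --- a further indication that the stronger statement your propagation scheme would establish may simply be false.
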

\begin{proof} 
Let $F:M\to \R$ be \wpe\ with \spe\ $\delta F$. We may assume that $M$ contains the origin.
We need to show that
there exists $R>0$ such that, for any choice of
pair $M_1,M_2$ in the canonical transversal $\Xi$ with equal parameter $\pim(M_1)=\pim(M_2)$ and any point $x \in M_1$ such that  
and $B_{R}\cap (M_1-x) = B_{R}\cap (M_2-x)$ 
we have $F_2(x)-F_1(x)=0$. Here we have denoted 
$F_i(x) := \tilde F(M_i-x)$ where  $\tilde F$ is the continuous map on the canonical 
transversal that is induced by $F$.

We denote by $R_0$ a radius of pattern equivariance of $\delta F$, that is a radius such that 
$F(x_2)-F(x_1)$ depends only on $B_{R+|x-y|}(0)\cap (\Lambda-x)$ and $y-x$ (see 
Subsection \ref{sec-pef}).
Fix $R>2R_0$ and consider {\em doubly pointed double
  $R$-patches\/}. These are double-$R$-patches $(P,Q)$ 
  of $(M_1,M_2)$ which are centered in a point $z\in M_1\cup M_2$, i.e.\ $P
= M_1\cap B_R(z)$ and $Q = M_2\cap B_R(z)$, together with two points
$x,y\in P\cap Q$ which are at least distance $R_0$ away from $z$ and the boundary $\partial B_R(z)$ of $B_R(z)$.
We denote such an object by
$P^{(2)}(x,y) = (x,y;P,Q;B_R(z))$.
By FLC there are
finitely many up to translation. Since $\delta F$ is \spe\ with radius $R_0$ the
expression 
$$F(P^{(2)}(x,y)):=\delta F_2(x,y)-\delta F_1(x,y) = F_2(y)-F_1(y) -(F_2(x)-F_1(x))$$
depends only on the translational congruence class of $P^{(2)}(x,y)$.
Hence the set $D_R$ of possible values $F$ can take on doubly pointed double $R$-patches is finite. 

We now need the following lemma.

\begin{lem}\label{prop-R_1}
There exists $\tilde N$ and $R_1>0$ such that for all pairs $(M_1,M_2)$ with $\pim(M_1)=\pim(M_2)$
and all 
$x\in M_1\cap M_2$ we have $F_2(x)-F_1(x) \in \tilde D_{R_1}: = {D_{R_1}+\cdots+D_{R_1}}$ ($\tilde N$ copies).
\end{lem}
\begin{proof}[Proof of the lemma]

Let $\V$ be the collection of subspaces $E(f)$ of $\R^d$ associated to the faces of $W$.  
Let $\nu\in\R^d$ be a vector of length one 
and pick $1\geq \omega>0$ such that the cone 
$$C := \bigcup_{\lambda\geq 0} B_{\lambda \omega}(\lambda \nu)$$
intersects each vector space of $\V$ only trivially. 

Now let $M_1,M_2\in \Omega_M$ satisfy $\pim(M_1)=\pim(M_2)$ and
consider a point $x\in M_1\cap M_2$.  Application of
Proposition \ref{rel-dense} with $r>0$ guarantees that there is $\rho>0 $ so
that we may choose a point $x_n\in B_{\rho}(x+n\rho\nu)$ at which $M_1$
and $M_2$ agree, and this for all $n\in\N$. We choose $x_0=x$ and thus
obtain a sequence $\{x_n\}\subset M_1\cap M_2$. 

Let $R_1 = 2\rho + 2R_0$ 
and consider the sequence of doubly pointed double $R_1$-patches
$P^{(2)}_n(x_n,x_{n+1}) =
(x_n,x_{n+1};P_n,Q_n;B_{2\rho}(x+(n+1/2)\rho\nu))$.  If $n\geq
4/\omega$ then $P^{(2)}_n(x_n,x_{n+1})$ is contained in $x+C$. 

Now the  intersection of $x+C$ with a hyperplane from $\aaa(M_1,M_2)$
separates at most one pair of consecutive points $(x_n,x_{n+1})$, if $n\geq
4/\omega$, and therefore intersects at most one double patch $(P_n,Q_n)$.
Thus by Lemma~\ref{bound-N} there are at most
$\tilde N = N+4/\omega$ double patches $(P_n,Q_n)$ which are distinct and so
for at most $\tilde N$ values of $n$ we have $F(P^{(2)}_n)\neq
0$. Once all hyperplanes are crossed, $M_1-x_n$ and $M_2-x_n$ agree
out to distance approximately $n\rho\omega$ and hence $\lim_{n\to
  \infty} (F_2(x_n)-F_1(x_n)) = 0$ by the weak pattern equivariance of
$F$. Thus $F_2(x)-F_1(x) = \sum_n F(P^{(2)}_n) \in \tilde D_{R_1}$
with $R_1 = 2\rho + R_0$.
\end{proof}

We continue the proof of Theorem \ref{PolyhedralTheorem}. If $\tilde
D_{R_1} = \{0\}$, then $F$ is \ns{} with radius $R_1$.  Otherwise, let
$c = \min\{|d|:d\in \tilde D_{R_1}\backslash \{0\}\}$.  Since $F$ is
\wpe\ $\tilde F$ is uniformly continuous, so there exists $R$ such
that $B_{R}\cap (M_1-x) = B_{R}\cap (M_2-x)$ implies $|\tilde
F(M_1-x)-\tilde F(M_2-x)|< c$. But if $B_{R}\cap (M_1-x) = B_{R}\cap
(M_2-x)$ then $x\in M_1\cap M_2$, so by Lemma~\ref{prop-R_1} the 
inequality implies $|\tilde F(M_1-x)-\tilde F(M_2-x)|= 0$.
\end{proof}

\begin{proof}[Proof of Theorem \ref{NewMainTheorem}]

  If $\mo'$ is pointed topologically conjugate to $\mo$, then $\mo'$
  is MLD to a pattern $\mo''$ that is shape conjugate to $\mo$. Let
  $F$ be the generator of that shape conjugacy. By Theorem
  \ref{PolyhedralTheorem} applied to vector-valued functions, $F$ is
  \ns. By Theorem \ref{mainlemma} applied to vector-valued functions,
  $F$ is then the sum of a linear map $L: H \to \R^d$ and a \spe{}
  function $\psi_\epsilon$. Since $L$ induces a reprojection and
  $\psi_\epsilon$ induces an MLD transformation, $\mo''$ is MLD to a
  reprojection of $\mo$.  Since $\mo'$ is MLD to $\mo''$, $\mo'$ is
  also MLD to a reprojection of $\mo$.
\end{proof}

\section{Cohomological interpretation}\label{sec-cohomology}

A Delone set $\Lambda$ is always mutually locally derivable with a
CW-complex $\tilde \Lambda$ in $\R^d$ such that it coincides with the
vertex set of the complex.  Indeed one may take the dual of the
Voronoi complex defined by $\Lambda$ \cite{BoulKel}.  A edge (or
$1$-cell) is then the convex combination of a pair of vertices and
hence the set of edges $\tilde\Lambda^{(1)}$ of the complex is in
bijection with a subset of $\Lambda\times\Lambda$. By construction,
the (necessarily finite) subset of edges emanating from $x$ is locally
derivable from $\Lambda$, that is, determined by the $R$-patch of
$\Lambda$ at $x$, for some fixed $R$. A $0$-cochain on the complex
with values in an abelian group $Y$ is a function $\Phi:\Lambda\to Y$
and a $1$-cochain a function $\Psi:\tilde\Lambda^{(1)}\to Y$. The
usual cohomological definition of coboundary of a $0$-cochain $\Phi$
on that complex is precisely the restriction of $\delta\Phi$ (as
defined in Subsection \ref{sec-pef}) to $\tilde\Lambda^{(1)}$.

In this section we are interested in certain subgroups of the first
cohomology $H^1(\Lambda,Y)$ of $\Lambda$ with values in $Y$ ($Y$ will
be $\Z$, $\R$, or $\R^d$) which is by definition the group of \spe\
$1$-cocycles modulo the coboundaries of \spe\ $0$-cochains. Since the
CW-complex is contractible, every $1$-cocycle is the coboundary of a
$0$-cochain, but not necessarily of a \spe\ one.  The notions of a
\spe\ $0$-cochain, a \wpe\ $0$-cochain, and a \spe\ $1$-cocycle are
therefore those given in Subsection \ref{sec-pef}.

The discussion of Subsection \ref{sec-an} 
(see also \cite{CS2}) can thus be summarized by saying that
shape semi-conjugacies (and small shape conjugacies), up to MLD shape
conjugacies, are parametrized by the sub-group of $H^1(\Lambda,\R^d)$ which consists 
of the classes of \spe\ co-cycles which are coboundaries of \wpe\ functions, as these are precisely the coboundaries of asymptotically negligible generators of deformations. 
We denote this subgroup by $H^1_{an}(\Lambda,\R^d)$.
 
There is an equivalent description of the cohomology $H^*(\Lambda,Y)$ of $\Lambda$
provided that $Y=\R$ or $\R^k$.  We can consider de Rham forms on
$\R^d$ which are \spe\ for $\Lambda$. These form a sub complex of the
usual de Rham complex for $\R^d$ and $H^*(\Lambda,Y)$ can be seen as
the cohomology of this sub complex \cite{Ke1}. In particular, the elements of
$H^1(\Lambda,\R^d)$ can be represented by  \spe\
differentials $dF$ of smooth functions $F:\R^d\to \R^d$ and the
relation with the description by $0$-cochains is given by integration: The map $dF\mapsto \delta F$ where, for a given edge $e=(x,y)\in\tilde\Lambda^{(1)}$, 
$\delta F(e) = \int_e dF = F(y)-F(x)$ defines 
a $1$-cocycle on the CW-complex \cite{BoulKel}.

$H^*(\Lambda,Y)$ is naturally isomorphic to the \v Cech
cohomology $\check H^*(\Omega_\Lambda,Y)$  \cite{integer}. 

There are a number of canonical
subgroups of $H^1(\Lambda, \R^d)$. Our results can be viewed as 
saying when these subgroups are equal and when they are not.

One subgroup, denoted $H^1_{lin}(\Lambda,\R^d)$ is given by generators
$F$ that are the the restriction to $\Lambda$ of linear maps
$L:\R^d\to\R^d$. For each such $F$, the deformed set is simply the
result of applying the linear transformation $\mbox{\rm id}+L$ to the
points of $\Lambda$. If $L$ is non-zero then the deformation cannot be
a local derivation, so $H^1_{lin}(\Lambda,\R^d)$ is isomorphic to
$Hom(\R^d,\R^d) \cong \R^{d^2}$ (as a vector space).

For model sets, reprojections give another subgroup, 
$H^1_{repr}(\Lambda,\R^d)$. Elements of $H^1_{repr}(\Lambda, \R^d)$ 
correspond to generators of the form $F_L(x) = L(\sigma_M(x))$, where
$L:H\to \R^d$ is a continuous group homomorphism. 

Recall that the elements of
$H^1_{an}(\Lambda, \R^d)$ are represented by the \spe\ coboundaries of \wpe\ functions.
Since \wpe\ functions are bounded  (in fact, a generator $F$ is \wpe\ iff it is bounded
  \cite{KS13}) whereas linear maps are unbounded, we must have
$H^1_{lin}(\Lambda,\R^d)\cap H^1_{an}(\Lambda,\R^d)
=\{0\}$.

We also considered \ns{} generators. Recall that any
\spe\ generator is \ns{} and a \ns{} generator is asymptotically
negligible. Hence the classes of coboundaries of \ns\ generators define
a subgroup $H^1_{ns}(\Lambda,\R^d)$ of $H^1_{an}(\Lambda,\R^d)$. 
Furthermore, in the context of  a model set $\mo$, $F_L$ is \ns\ and so
$H^1_{repr}(\mo,\R^d)$ is a subgroup of $H^1_{ns}(\mo,\R^d)$.  
Thus we have a sequence of inclusions
$$ H^1_{repr}(\mo, \R^d) \subset H^1_{ns}(\mo, \R^d) 
\subset H^1_{an}(\mo, \R^d).$$
A natural question is whether these groups coincide.

\subsection{Reinterpretation of Theorems ~\ref{mainlemma} and \ref{PolyhedralTheorem}}
We can also consider functions and cochains 
with values in $\R$ rather than with values in $\R^d$, with
$$ H^1_{repr}(\Lambda, \R^d) =  H^1_{repr}(\Lambda, \R) \otimes \R^d, \quad
H^1_{ns}(\Lambda, \R^d) =  H^1_{ns}(\Lambda, \R) \otimes \R^d, \quad
H^1_{an}(\Lambda, \R^d) =  H^1_{an}(\Lambda, \R) \otimes \R^d.
$$

The following are immediate corollaries of Theorem \ref{mainlemma} 
and Theorem \ref{PolyhedralTheorem}. Together they Theorem~\ref{MainTheorem2}. 
\begin{cor}
If $\mo$ is a model set satisfying
H1 and H\/$2'$, then $H^1_{repr}(\mo, \R) = H^1_{ns}(\mo, \R)$.
\end{cor}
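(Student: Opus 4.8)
The plan is to prove the two inclusions $H^1_{repr}(\mo,\R)\subset H^1_{ns}(\mo,\R)$ and $H^1_{ns}(\mo,\R)\subset H^1_{repr}(\mo,\R)$ separately. The forward inclusion is already in hand: it was recorded just before the statement that every generator of the form $F_L(x)=L(\sigma_M(x))$, with $L:H\to\R$ a continuous group homomorphism, is \ns, so its class lies in $H^1_{ns}(\mo,\R)$. Thus only the reverse inclusion requires work, and here Theorem \ref{mainlemma} does essentially all of it.

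First I would take an arbitrary class in $H^1_{ns}(\mo,\R)$ and pick a representative, namely a generator $F:M\to\R$ that is \wpe, has \spe\ coboundary, and is \ns. Since $M$ satisfies H1 and H$2'$, Theorem \ref{mainlemma} applies directly and yields a decomposition $F(x)=L(\sigma_M(x))+\psi(x)$, with $L:H\to\R$ a continuous linear map and $\psi:M\to\R$ a \spe\ function.

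Next I would pass to cohomology. Writing $F_L=L\circ\sigma_M$ and taking coboundaries gives $\delta F=\delta F_L+\delta\psi$. Because $H^1(\mo,\R)$ is by definition the cohomology of the complex of \spe\ cochains, and $\psi$ is \spe, the cocycle $\delta\psi$ is a coboundary in that complex and hence represents the zero class. Therefore $[\delta F]=[\delta F_L]$ in $H^1(\mo,\R)$. Since on $H=\R^n\times C$ a continuous linear map is precisely a continuous group homomorphism, automatically trivial on the torsion factor $C$, the function $F_L$ is a reprojection generator and $[\delta F_L]\in H^1_{repr}(\mo,\R)$. This shows that every class in $H^1_{ns}(\mo,\R)$ lies in $H^1_{repr}(\mo,\R)$, which together with the forward inclusion gives the claimed equality.

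There is no serious obstacle once Theorem \ref{mainlemma} is available; the work is purely organizational. The only point demanding care is the cohomological bookkeeping, namely that the chosen representative of a class in $H^1_{ns}$ is genuinely a \ns\ generator in the sense required by Theorem \ref{mainlemma}, and that adding the \spe\ correction $\psi$ leaves the cohomology class unchanged. Both are immediate from the definitions. I would also note that Theorem \ref{PolyhedralTheorem} plays no role in this particular corollary, since it assumes the stronger hypothesis H2 whereas the present statement assumes only H$2'$; the equality $H^1_{repr}(\mo,\R)=H^1_{ns}(\mo,\R)$ is therefore a consequence of Theorem \ref{mainlemma} alone.
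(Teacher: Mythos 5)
Your proof is correct and follows essentially the same route as the paper: the forward inclusion comes from the previously established fact that reprojection generators $F_L$ are \ns, and the reverse inclusion is exactly the paper's one-line argument, applying Theorem \ref{mainlemma} to write a \ns{} generator as $L\circ\sigma_M + \psi$ with $\psi$ \spe{} and noting that $\delta\psi$ is trivial in cohomology. Your added bookkeeping (including the observation that Theorem \ref{PolyhedralTheorem} is not needed here) is accurate but does not change the substance.
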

\begin{proof} By Theorem \ref{mainlemma}, each \ns{} generator is
the sum of a linear function on $H$ and a \spe{} function, so 
the cohomology class of its coboundary is in $H^1_{repr}(\mo, \R)$. 
\end{proof}
\begin{cor} 
If $\mo$ is a model set satisfying
H1 and H2, then $H^1_{ns}(\mo, \R) = H^1_{an}(\mo, \R)$.
\end{cor}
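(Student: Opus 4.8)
The plan is to recognize this corollary as the cohomological translation of Theorem~\ref{PolyhedralTheorem}. The inclusion $H^1_{ns}(\mo,\R)\subset H^1_{an}(\mo,\R)$ is already built into the definitions, since every \ns\ generator is asymptotically negligible; so the only thing to prove is the reverse inclusion $H^1_{an}(\mo,\R)\subset H^1_{ns}(\mo,\R)$.

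For that, I would start from an arbitrary class $c\in H^1_{an}(\mo,\R)$ and pick a representative of the standard form. By the description of asymptotically negligible classes, $c=[\delta F]$ for some \wpe\ function $F:\mo\to\R$ whose coboundary $\delta F$ is \spe. These are precisely the hypotheses of Theorem~\ref{PolyhedralTheorem}, which, using H1 and H2, yields that $F$ is \ns. Thus $\delta F$ is the coboundary of a \ns\ generator, so $c=[\delta F]$ lies in $H^1_{ns}(\mo,\R)$. Together with the inclusion noted above this gives the claimed equality.

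I do not anticipate any genuine obstacle: all of the analytic difficulty is absorbed into Theorem~\ref{PolyhedralTheorem} (whose proof, via Lemma~\ref{prop-R_1} and the cone argument against the hyperplane arrangement $\aaa(M_1,M_2)$, is where the real work happens). What remains here is a bookkeeping check that the functions used to define $H^1_{an}(\mo,\R)$ — namely \wpe\ functions with \spe\ coboundary — are exactly those to which that theorem applies. The only subtlety worth stating explicitly is that the \ns\ property is attached to a \emph{generator}, whereas $H^1_{ns}(\mo,\R)$ is a subgroup of cohomology \emph{classes}; the argument resolves this by exhibiting, for each asymptotically negligible class, one honest \ns\ representative, which is all that the definition of $H^1_{ns}(\mo,\R)$ demands.
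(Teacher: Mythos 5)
Your proposal is correct and is essentially the paper's own proof: the authors likewise observe that Theorem~\ref{PolyhedralTheorem} already gives the statement at the level of cochains, since every \wpe{} function with \spe{} coboundary (i.e., every generator representing an asymptotically negligible class) is \ns{} under H1 and H2, and the inclusion $H^1_{ns}(\mo,\R)\subset H^1_{an}(\mo,\R)$ holds by definition. Your explicit remark about passing from generators to cohomology classes is a fair bookkeeping point, but it is handled exactly as you suggest.
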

\begin{proof} By Theorem \ref{PolyhedralTheorem} an asymptotically negligible generator is \ns.
Taking the cohomology class of its coboundary therefore yields the inclusion 
$H^1_{ns}(\mo, \R) \subset H^1_{an}(\mo, \R)$.
\end{proof}
\subsection{Image of the first cohomology of the maximal
  equicontinuous torus}
The maximal equicontinuous factor map $\pi_{max}:\Omega\to
\Omega_{max}$ induces an injective map in cohomology 
$\pi_{max}^*:H^1(\Omega_{max},\Z)\to H^1(\Omega,\Z)$. 
We have thus a fourth subgroup which is worth comparing with the other, 
namely the image under $\pim^*$ of  $H^1(\Omega_{max},\R^d)$ which we denote by 
$H^1_{max}(\Lambda,\R^d)$. 
To do that we need a better understanding of
the image of $\pi_{max}^*$.

We recall from \cite{BKS} that $\Omega_{max}$ can be alternatively described with the
help of the topological eigenvalues of the action. Let $\hat \R^d =
Hom(\R^d,U(1))$ be the Pontryagin dual of $\R^d$, where we require
each homomorphism to be continuous on $\R^d$. An element
$\chi\in\hat{\R^d}$ is a topological eigenvalue if there exists a
non-vanishing continuous function $f:\Omega\to\C$ such that
$f(\Lambda-t) = \chi(t)f(\Lambda)$. Topological eigenvalues form a
countable subgroup $\eee$ of $\hat{\R^d}$ and $\Omega_{max}$ can be
identified with the dual $\hat\eee = Hom(\eee, U(1))$, where $\eee$ is
given the discrete topology. 

For a general topological space $X$,
$H^1(X,\Z)$ is isomorphic to $[X,S^1]$, the homotopy classes of
continuous maps $X\to S^1$. Furthermore, if $\varphi:X\to Y$ is a
continuous map then $\varphi^*:H^1(Y,\Z)\to H^1(X,\Z)$ can be
identified with the mapping $[Y,S^1]\ni[f] \mapsto [f\circ\varphi]\in
[X,S^1]$. We apply this to $\pi_{max}:\Omega\to\Omega_{max}$. 
Since $\Omega_{max}\cong\hat{\mathcal E}$ the elements of
$[\Omega_{max},S^1]$ are the homotopy classes of characters on
$\hat{\mathcal E}$. Hence  $[\Omega_{max},S^1]\cong \mathcal E$ and 
the image of $\chi\in \mathcal E$ under $\pi_{max}^*$ in $[\Omega,S^1]$
is given by the homotopy class of an eigenfunction $f_\chi$ of $\chi$ ($f_\chi$ is
normalized so as to have modulus $1$). This describes the image of
$\pi_{max}^*$ (in degree one) in $[\Omega,S^1]$ \cite{BKS}. 

To obtain the image of $\pi_{max}^*$ in pattern equivariant cohomology
we consider the restriction of a representative $f$ of an
element of $[\Omega,S^1]$ to the orbit of $\Lambda$ and define 
$$\check f:\R^d\to S^1, \quad \check f(x) := f(\Lambda-x).$$ 
\begin{lem}
Any element of $[\Omega,S^1]$ admits a representative  $f$ such
that
$\check f(x) := f(\Lambda-x)$ is \spe.
\end{lem}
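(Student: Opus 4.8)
The plan is to reduce the problem to the finite approximants of an inverse limit presentation of $\Omega$, where every homotopy class of maps to $S^1$ is realized by an honest cellular map whose pullback to $\R^d$ is manifestly \spe. Since $\Lambda$ is a repetitive FLC Delone set, its hull admits a presentation as an inverse limit $\Omega=\varprojlim(\Gamma_n,\pi_n)$ of finite $CW$-complexes (approximants), with continuous surjections $\pi_n:\Omega\to\Gamma_n$ and cellular bonding maps $\Gamma_{n+1}\to\Gamma_n$. The feature I would exploit is that each $\pi_n$ is itself pattern equivariant: there is a radius $R_n$ such that the point $\pi_n(T)\in\Gamma_n$ is determined by $T\cap B_{R_n}(0)$. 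Viewing $x\mapsto\pi_n(\Lambda-x)$ as a $\Gamma_n$-valued function on $\R^d$, this says precisely that if $\Lambda-x_1$ and $\Lambda-x_2$ agree exactly on $B_{R_n}(0)$ then $\pi_n(\Lambda-x_1)=\pi_n(\Lambda-x_2)$; that is, $x\mapsto\pi_n(\Lambda-x)$ is \spe\ with radius $R_n$.

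First I would realize the given class at finite level. Combining $[\Omega,S^1]\cong\check H^1(\Omega,\Z)$ with the continuity of \v Cech cohomology for inverse limits of compact spaces gives $\check H^1(\Omega,\Z)\cong\varinjlim \check H^1(\Gamma_n,\Z)\cong\varinjlim [\Gamma_n,S^1]$, the canonical maps into the limit being the $\pi_n^*$. (Equivalently one may use the isomorphism $H^1(\Lambda,\Z)\cong\check H^1(\Omega,\Z)$ of the preceding section \cite{integer}, together with the observation that a \spe\ $\Z$-valued $1$-cocycle of radius $R$ descends to a cellular cocycle on any approximant $\Gamma_n$ with $R_n>R$.) Hence for the given $[f]\in[\Omega,S^1]$ there are an index $n$ and a continuous map $g:\Gamma_n\to S^1$ with $f\simeq g\circ\pi_n$, and I would take $g\circ\pi_n$ as the representative.

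It then remains only to verify that this representative has \spe\ pullback. Its pullback is $\check f(x)=(g\circ\pi_n)(\Lambda-x)=g\bigl(\pi_n(\Lambda-x)\bigr)$. By the first paragraph the inner map $x\mapsto\pi_n(\Lambda-x)$ is \spe\ with radius $R_n$, and $g$ is continuous on all of $\Gamma_n$; so whenever $\Lambda-x_1$ and $\Lambda-x_2$ agree exactly on $B_{R_n}(0)$ we get $\pi_n(\Lambda-x_1)=\pi_n(\Lambda-x_2)$ and therefore $\check f(x_1)=\check f(x_2)$. Thus $\check f$ is \spe\ with radius $R_n$, as claimed.

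The hard part is the input to the first paragraph, namely the fact that the approximant projection $\pi_n$ is determined by a bounded patch about the origin, so that $x\mapsto\pi_n(\Lambda-x)$ is genuinely \spe. This is exactly what the Anderson--Putnam/G\"ahler construction provides: the cells of $\Gamma_n$ are indexed by collared local configurations, and $\pi_n(T)$ records the collared tile at the origin together with the continuous position of the origin inside it, all of which is visible within the collaring radius $R_n$. Once this is in hand the homotopy-theoretic reduction to finite level is routine and no metric estimate is required, so this is where essentially all the content of the lemma lies.
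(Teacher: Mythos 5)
Your proof is correct and takes essentially the same route as the paper: the paper likewise presents $[\Omega,S^1]$ as the direct limit of $[\mathcal G_n,S^1]$ over the G\"ahler approximants \cite{SadunG} and observes that classes pulled back from a finite approximant give \spe\ functions on the orbit. You have simply made explicit the two ingredients the paper leaves implicit, namely the continuity of \v Cech cohomology under inverse limits and the pattern equivariance of the approximant projections.
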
 
\begin{proof}
$[\Omega,S^1]$ can be seen as the direct limit of $[\mathcal G_n,S^1]$ where
$\mathcal G_n$ is the $nth$ approximant in the G\"ahler complex \cite{SadunG}.
Each element thus comes from some $[\mathcal G_n,S^1]$ and the latter
elements produce \spe\ functions when considered on the orbit.  
\end{proof}
Since $\R^d$ is simply connected we can
lift $\check f$ to a continuous function
$\tau:\R^d\to\R$ such that $\check f(x) = \exp{2\pi i \tau(x)}$.
We define $F$ to be the restriction of $\tau$ to $\Lambda$. Then $\delta
F$ is \spe\ and so we have a map
$[\Omega,S^1]\to H^1(\Lambda,\Z)$: $[f] \mapsto [\delta F]$.
\begin{lem}
  With the above notation $[f] \mapsto [\delta F]$ is a group
  homomorphism whose image corresponds to the image of $[\Omega,S^1]$
  in $H^1(\Omega,\R)$ under the identification $H^1(\Omega,\Z)\cong
  H^1(\Lambda,\Z)\subset H^1(\Lambda,\R)$.
\end{lem}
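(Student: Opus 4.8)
The plan is to identify the hands-on assignment $[f]\mapsto[\delta F]$ with the pullback by $\check f$ of the canonical integral generator of $H^1(S^1,\Z)$, computed in the de Rham model of $H^1(\Lambda,\R)$, and then to read off both assertions by functoriality. Concretely, using the earlier fact (\cite{Kel}) that \spe\ $\R$-valued functions may be taken smooth, I would pick the representative $f$ so that $\check f(x)=f(\Lambda-x)$ is \spe, lift it to $\tau:\R^d\to\R$ with $\check f=\exp(2\pi i\tau)$, and arrange $\tau$ to be smooth and \spe. Let $\omega=d\theta/2\pi$ be the normalised angular form on $S^1$, whose de Rham class is the image of the integral generator $\iota\in H^1(S^1,\Z)$. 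Since $\check f=\exp(2\pi i\tau)$ we have $\check f^*\omega=d\tau$, an \spe\ $1$-form, and for an oriented edge $e$ with vertices $x,y\in\Lambda$ one computes $\int_e d\tau=\tau(y)-\tau(x)=\delta F(e)$. Thus in the de Rham model $[\delta F]=\check f^*[\omega]=\check f^*\iota$, the pullback of the integral generator.

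Next I would check that the class is well defined and that the map is a homomorphism. Two lifts of the same $\check f$ differ by a constant integer, which is annihilated by $\delta$, so the choice of $\tau$ is immaterial. Independence of the representative is homotopy invariance of pullback: if $f_0\simeq f_1$ then $\check f_0\simeq\check f_1$ on the orbit and $\check f_0^*[\omega]=\check f_1^*[\omega]$. For the group law, the multiplication on $[\Omega,S^1]$ is pointwise product; if $\check f_1,\check f_2$ are smooth and \spe\ then so is $\check f_1\check f_2$, and a lift of the product is $\tau_1+\tau_2$, whence $F=F_1+F_2$ and $\delta F=\delta F_1+\delta F_2$. Therefore $[f]\mapsto[\delta F]$ is additive.

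Finally I would extract the image statement. Because $[\omega]$ is integral, so is $[\delta F]=\check f^*[\omega]$, and it therefore lies in the image of $H^1(\Lambda,\Z)$ in $H^1(\Lambda,\R)$ (note that $\delta F$ itself is only an $\R$-valued cochain; it is its class that is integral). Under the Bruschlinsky isomorphism $[\Omega,S^1]\cong\check H^1(\Omega,\Z)$, obtained as in the preceding lemma by passing to the inverse limit of the isomorphisms $[\mathcal G_n,S^1]\cong H^1(\mathcal G_n,\Z)$, the assignment $[f]\mapsto\check f^*\iota$ is exactly this isomorphism. Post-composing with the coefficient map $\check H^1(\Omega,\Z)\to\check H^1(\Omega,\R)$ and the pattern-equivariant identification $\check H^*(\Omega,A)\cong H^*(\Lambda,A)$, the map $[f]\mapsto[\delta F]\in H^1(\Lambda,\R)$ is identified with $H^1(\Omega,\Z)\to H^1(\Omega,\R)$. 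Hence its image is precisely the image of $[\Omega,S^1]=H^1(\Omega,\Z)$ in $H^1(\Omega,\R)$, which is the claim.

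The one genuinely non-formal point, and the step I expect to be the main obstacle, is the compatibility of the explicit exponential-lift construction with the abstract identifications: one must verify that representing classes by \spe\ de Rham forms computes \v Cech cohomology naturally with respect to $\check f^*$, so that $[\delta F]$ really is $\check f^*\iota$ transported through $\check H^1(\Omega,\Z)\cong H^1(\Lambda,\Z)$. Granting the naturality of the identifications of \cite{integer,Ke1}, the remaining verifications are the routine functorial manipulations above.
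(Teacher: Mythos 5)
Your proposal is correct and is essentially the paper's own argument: both represent the image of $[f]$ under the Bruschlinsky identification by the winding form ($\frac{1}{2\pi i}\,\check f^{-1}d\check f = d\tau$), recover $\delta F$ by integrating this $1$-form over edges, and reduce the compatibility of the de Rham, cellular and \v Cech pictures to the G\"ahler approximants $\mathcal G_n$ (the paper does this directly inside the proof, you via the preceding lemma and the inverse-limit description of $[\Omega,S^1]$). One slip to correct: you cannot ``arrange $\tau$ to be smooth and \spe'' --- if $\tau$ were \spe\ then $\delta F$ would be the coboundary of a \spe\ function and $[\delta F]$ would vanish, making the map identically zero; only $d\tau$ (equivalently $\delta F$) is \spe, which follows from $\check f$ being \spe\ and smooth and is all your argument actually uses, so nothing downstream is affected.
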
 
\begin{proof}
  Using G\"ahler's approximation this statement boils down to consider
  the map between $[\mathcal G_n,S^1]$ and $H^1(\mathcal G_n,\R)$
  where the latter can be considered as de Rham cohomology on a
  branched manifold. If $[f]\in [\mathcal G_n,S^1]$ then (assuming
  without restriction of generality that $f$ is smooth) $\frac1{2\pi
    i}f^{-1}df$ represents the element in $H^1(\mathcal G_n,\R)$ under
  the map $[\mathcal G_n,S^1]\to H^1(\mathcal G_n,\Z)\subset
  H^1(\mathcal G_n,\R)$. This is well-known (see also \cite{KP}). Now
  in order to obtain the map on cellular cohomology of $\mathcal G_n$
  one just needs to integrate over $1$-chains. The result is a
  $1$-cochain which, when interpreted as \spe\ $1$-cochain on
  $\Lambda$ coincides presicely with $\delta F$.
\end{proof}

If we combine the two arguments we see that the image of
$H^1(\Omega_{max},\Z)\cong \mathcal E$ in pattern equivariant
cohomology can be described as follows.
\begin{cor}\label{cor-H1}
Upon the above identification of $H^1(\Omega_{max},\Z)$ with $\eee$ and the identification of 
$H^1(\Omega,\Z)$ with $H^1(\Lambda,\Z)$ the map $\pim^*$ becomes
the map 
$$\eee\ni \chi \mapsto [\delta\tilde\beta\left|_\Lambda\right. ]\in H^1(\Lambda,\Z)$$
where $\tilde \beta\left|_\Lambda\right.$ is the restriction to $\Lambda$ of a
continuous function $\tilde \beta :\R^d\to \R$ such that $\exp(2\pi i\tilde\beta)$ is \spe\ and
homotopic to $\chi$.
\end{cor}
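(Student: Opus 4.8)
The plan is to chain together the description of $\pim^*$ on homotopy classes with the two preceding lemmas, so that no new analytic input is needed beyond tracking the identifications. First I would recall that under the isomorphism $H^1(X,\Z)\cong[X,S^1]$ the induced map $\pim^*$ is simply precomposition, $[f]\mapsto[f\circ\pim]$. Since $\Omega_{max}\cong\hat\eee$, a topological eigenvalue $\chi\in\eee$ is precisely a character on $\hat\eee$, i.e.\ a map $\Omega_{max}\to S^1$, so that under $H^1(\Omega_{max},\Z)\cong\eee$ we have $\pim^*(\chi)=[\chi\circ\pim]\in[\Omega,S^1]$. As noted above, $\chi\circ\pim$ is homotopic to the unit-modulus eigenfunction $f_\chi$, so $\pim^*(\chi)=[f_\chi]$ in $[\Omega,S^1]\cong H^1(\Omega,\Z)$.

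Next I would pass from $[\Omega,S^1]$ to pattern-equivariant cohomology via the construction introduced just before the second lemma. By the first lemma the class $[f_\chi]$ admits a representative $f$ whose pullback $\check f(x)=f(\Lambda-x)$ is \spe. Lifting $\check f$ along the covering $\R\to S^1$ produces a continuous $\tilde\beta:\R^d\to\R$ with $\exp(2\pi i\tilde\beta)=\check f$; by construction $\exp(2\pi i\tilde\beta)$ is \spe, and since $f$ represents $[f_\chi]$ it is homotopic to $\chi$. Setting $F=\tilde\beta|_\Lambda$ gives a \spe\ coboundary $\delta F=\delta(\tilde\beta|_\Lambda)$ representing a class in $H^1(\Lambda,\Z)$.

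It then remains only to identify this class with $\pim^*(\chi)$ under $H^1(\Omega,\Z)\cong H^1(\Lambda,\Z)$, and this is exactly what the second lemma supplies: it states that $[f]\mapsto[\delta F]$ is a group homomorphism realizing that identification. Applying it to $[f]=[f_\chi]=\pim^*(\chi)$ yields $\pim^*(\chi)=[\delta(\tilde\beta|_\Lambda)]$, which is the asserted formula.

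I expect the only genuinely delicate point to be well-definedness of the right-hand side, namely that $[\delta(\tilde\beta|_\Lambda)]$ depends neither on the choice of \spe\ representative $f$ within the homotopy class of $f_\chi$ nor on the chosen lift $\tilde\beta$. Both ambiguities are already absorbed by the second lemma: homotopic representatives determine the same element of $[\Omega,S^1]$ and hence the same image under the homomorphism $[f]\mapsto[\delta F]$, while two lifts of a fixed $\check f$ differ by a $\Z$-valued, hence constant, function whose coboundary vanishes. Thus the substance of the corollary is purely this bookkeeping of identifications rather than any new estimate, and the proof should amount to citing the two lemmas in the correct order.
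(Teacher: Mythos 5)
Your proposal is correct and follows essentially the same route as the paper: the paper's own justification is precisely to combine the \cite{BKS} description of $\pim^*$ on $[\Omega_{max},S^1]\cong\eee$ (sending $\chi$ to the homotopy class of a unit-modulus eigenfunction) with the two preceding lemmas giving the \spe{} representative and the homomorphism $[f]\mapsto[\delta F]$ realizing the identification $H^1(\Omega,\Z)\cong H^1(\Lambda,\Z)$. Your additional remark on well-definedness (independence of the \spe{} representative and of the lift, the latter because two lifts differ by an integer constant) is a harmless elaboration of bookkeeping the paper leaves implicit.
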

Each eigenvalue $\chi$ can be lifted, that is 
there exists a $\beta\in{\R^d}^*$ such that $\chi(x) = \exp 2\pi i \beta(x)$. Therefore
$\tilde \beta - \beta$ must be bounded and hence \wpe. It follows
that $[\delta\tilde\beta] - [\delta\beta]\in
H^1_{an}(\Lambda,\R)$ and, since $\beta$ is linear, we see that
$$ H^1_{max}(\Lambda,\R^d) \subset
H^1_{lin}(\Lambda,\R^d)+H^1_{an}(\Lambda,\R^d).$$
Stated differently, the image of $ H^1_{max}(\Lambda,\R^d)$ in the
quotient group\footnote{This quotient group
is the mixed group of \cite{Kel} studied in \cite{B-thesis} for projection method tilings.}
 $$H^1_{m}(\Lambda,\R^d) :=
H^1(\Lambda,\R^d)/H^1_{an}(\Lambda,\R^d)$$
can be identified with a subspace of the vector space of linear deformations.
Indeed let 
$\psi: H^1(\Omega_{max},\R^d) \to
H^1_{m}(\Lambda,\R^d)$ be the composition of $\pi_{max}^*$ with the
canonical projection. It is induced by 
$[\exp 2\pi i \beta] \mapsto [\delta\beta]$. 
\begin{lem}
Let $\Lambda$ be a repetitive FLC Delone set.
We have
$$\psi(H^1(\Omega_{max},\R^d)) =
\big(H^1_{lin}(\Lambda,\R^d)+H^1_{an}(\Lambda,\R^d)\big)/
H^1_{an}(\Lambda,\R^d)\cong Hom(\R^d,\R^d)$$ 
whenever $\Lambda$ is topologically conjugate to a Meyer set.
\end{lem}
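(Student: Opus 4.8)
The plan is to establish the two displayed equalities in turn: first the purely algebraic isomorphism on the right-hand side, then the identification of the image of $\psi$ with it, where only the latter requires the Meyer hypothesis.

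First I would dispose of the isomorphism $(H^1_{lin}(\Lambda,\R^d)+H^1_{an}(\Lambda,\R^d))/H^1_{an}(\Lambda,\R^d)\cong Hom(\R^d,\R^d)$. By the second isomorphism theorem this quotient equals $H^1_{lin}(\Lambda,\R^d)/(H^1_{lin}(\Lambda,\R^d)\cap H^1_{an}(\Lambda,\R^d))$. The intersection is trivial, as already noted above: generators of shape semi-conjugacies are \wpe\ and hence bounded, whereas a nonzero linear map is unbounded, so $H^1_{lin}(\Lambda,\R^d)\cap H^1_{an}(\Lambda,\R^d)=\{0\}$. Together with $H^1_{lin}(\Lambda,\R^d)\cong Hom(\R^d,\R^d)$ this yields the claimed isomorphism, and note that this step uses only that $\Lambda$ is a repetitive FLC Delone set, not the Meyer hypothesis.

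Next, the inclusion $\psi(H^1(\Omega_{max},\R^d))\subseteq(H^1_{lin}(\Lambda,\R^d)+H^1_{an}(\Lambda,\R^d))/H^1_{an}(\Lambda,\R^d)$ is immediate from the containment $H^1_{max}(\Lambda,\R^d)\subset H^1_{lin}(\Lambda,\R^d)+H^1_{an}(\Lambda,\R^d)$ established just before the lemma, since $\psi$ is by definition $\pi_{max}^*$ followed by the projection modulo $H^1_{an}(\Lambda,\R^d)$. The content therefore lies entirely in the reverse inclusion. Here I would use the explicit formula for $\psi$ recorded above, namely that it is induced by $[\exp 2\pi i\beta]\mapsto[\delta\beta]$ with $\beta\in(\R^d)^*$ the linear lift of a topological eigenvalue. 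Under the isomorphism of the first step, $\psi(H^1(\Omega_{max},\R^d))$ is thus identified with $V\otimes\R^d\subset(\R^d)^*\otimes\R^d=Hom(\R^d,\R^d)$, where $V\subseteq(\R^d)^*$ is the real span of the lifts $\beta$ of the topological eigenvalues. It then suffices to prove $V=(\R^d)^*$. But by the characterization in \cite{KS13} a repetitive FLC Delone set is topologically conjugate to a repetitive Meyer set precisely when it has $d$ independent topological eigenvalues; the lifts $\beta_1,\ldots,\beta_d$ of such eigenvalues form a basis of $(\R^d)^*$, whence $V=(\R^d)^*$ and the image is all of $Hom(\R^d,\R^d)$.

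The main obstacle is the faithful bookkeeping of the $\R^d$-coefficients in the last step: one must verify that tensoring the scalar spanning statement $V=(\R^d)^*$ with $\R^d$ really produces every linear map $L=(L_1,\ldots,L_d)$, by writing each component functional $L_i$ as a real combination of eigenvalue lifts and matching the $\R^d$-slots through the natural factorization $H^1(\Omega_{max},\R^d)=H^1(\Omega_{max},\R)\otimes\R^d$. This is routine once the scalar statement is in hand, so the genuine crux is simply importing \cite{KS13} to convert the hypothesis ``topologically conjugate to a Meyer set'' into the spanning property of the eigenvalue lifts.
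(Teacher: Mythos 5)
Your proposal is correct and follows essentially the same route as the paper's own proof: the inclusion $\psi(H^1(\Omega_{max},\R^d))\subset\big(H^1_{lin}+H^1_{an}\big)/H^1_{an}$ from the containment $H^1_{max}\subset H^1_{lin}+H^1_{an}$, the identification of the quotient with $Hom(\R^d,\R^d)$ via $H^1_{lin}\cap H^1_{an}=\{0\}$, and the reduction of surjectivity to the real span of the eigenvalue lifts $\beta$ having dimension $d$, which is converted into the Meyer hypothesis by citing \cite{KS13}. You merely spell out the quotient isomorphism and the $\otimes\,\R^d$ bookkeeping more explicitly than the paper does.
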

\begin{proof}
The above shows that $\psi(H^1(\Omega_{max},\R^d)) \subset
\big(H^1_{lin}(\Lambda,\R^d)+H^1_{an}(\Lambda,\R^d)\big)/
H^1_{an}(\Lambda,\R^d)$ and the r.h.s.\ is clearly isomorphic to the vector space of all linear deformations. Thus  equality holds precisely if the real span of $\{\beta\in{\R^d}^* :\exp 2\pi i \beta \in \eee\}$ has dimension $d$. By the results of \cite{KS13} this is equivalent to saying that
$\Lambda$ is topologically conjugate to a Meyer set.
\end{proof}
\subsection{Case of model sets}
In the case of model sets which satisfy H1 we can say more, because we
have a more explicit model for the maximal equicontinuous factor,
namely the torus parametrization.
Indeed, if $H = \R^n$ then, by cocompactness of $\Gamma$,   
$\Omega_{max} =  H\times\R^d/\Gamma$ is an $n+d$-torus 
and so we can identify $H^1(\Omega_{max},\Z)\cong \eee\cong \hat\Omega_{max}$ 
with the
so-called reciprocal lattice  $\Gamma^{rec}$ which
is given by those linear maps $\alpha:\R^d\times\R^d\to \R$ which satisfy 
$\alpha(\gamma) \in\Z$ for all $\gamma\in\Gamma$. 
\begin{prop} Let $\mo$ be a model set satisfying H1.
Upon the identification of $H^1(\Omega_{max},\Z)$ with $\Gamma^{rec}$ and the identification of 
$H^1(\Omega,\Z)$ with $H^1(\mo,\Z)$ the map $\pim^*$ becomes
the map 
$$\Gamma^{rec}\ni \alpha \mapsto [\delta \alpha(\sigma_M(\cdot),\cdot))]\in H^1(\mo,\Z).$$
\end{prop}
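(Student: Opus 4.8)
The plan is to specialize Corollary~\ref{cor-H1} to the model-set situation, where the maximal equicontinuous factor is the torus $\Omega_{max}=(H\times\R^d)/\Gamma$ and the group $\eee$ of eigenvalues is the reciprocal lattice $\Gamma^{rec}$. First I would make the correspondence $\eee\cong\Gamma^{rec}$ explicit at the level of eigenfunctions. Given $\alpha\in\Gamma^{rec}$, the condition $\alpha(\Gamma)\subset\Z$ says exactly that $[v]_\Gamma\mapsto\exp(2\pi i\alpha(v))$ is a well-defined character $\hat\alpha:\Omega_{max}\to S^1$, so that $f_\alpha:=\hat\alpha\circ\pim$ is a continuous eigenfunction on $\Omega$. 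Using (\ref{eq-pistar}), namely $\pim(M-x)=[\sigma_M(x),0]_\Gamma$, its restriction to the orbit is $\check f_\alpha(x)=\exp\!\big(2\pi i\,\alpha(\sigma_M(x),0)\big)$ for $x\in M$, so the associated character of $\R^d$ is $\chi(t)=\exp(-2\pi i\,\alpha(0,t))$ with linear lift $\beta=-\alpha(0,\cdot)$. This pins down which eigenvalue corresponds to $\alpha$ and connects the statement to Corollary~\ref{cor-H1}.

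Next I would exhibit the cochain $\delta\alpha(\sigma_M(\cdot),\cdot)$ and record its two decisive properties. Set $g(x)=\alpha(\sigma_M(x),x)$ for $x\in M$. Since $\alpha$ is a homomorphism and $\sigma_M(x_2)-\sigma_M(x_1)=(x_2-x_1)^*$, the coboundary on an edge $e$ from $x_1$ to $x_2$ is $\delta g(e)=\alpha\big((x_2-x_1)^*,\,x_2-x_1\big)$. This depends only on the displacement $x_2-x_1$, so $\delta g$ is strongly pattern equivariant; and since $\big((x_2-x_1)^*,x_2-x_1\big)=(\sigma(x_2-x_1),x_2-x_1)$ is the canonical lift of $x_2-x_1\in\Gamma^\|$ into $\Gamma$, we get $\delta g(e)=\alpha(\gamma)\in\Z$. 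Thus $\delta g$ is an \emph{integral} spe $1$-cocycle, and its periods around loops are $\alpha$ of the corresponding return elements of $\Gamma$, hence again integers; this is what makes $[\delta g]$ a genuine class in $H^1(M,\Z)$.

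It then remains to identify $[\delta g]$ with $\pim^*(\alpha)$. Here I would use that $\exp(2\pi i g)=\check f_\alpha\cdot\exp(2\pi i\,\alpha(0,\cdot))=\check f_\alpha\cdot\chi^{-1}$, so that $g$ is, up to the linear term and a bounded error, a lift of the eigenfunction of $\chi$. Matching this against the lift $\tilde\beta$ furnished by Corollary~\ref{cor-H1}, and observing that integer-valued corrections leave the $S^1$-valued map unchanged while producing precisely the winding recorded by $\delta g$, one obtains $\pim^*(\alpha)=[\delta g]$.

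The hard part is exactly this last identification, because the star map $\sigma_M$ is only \emph{weakly} pattern equivariant: $\check f_\alpha=\exp(2\pi i\,\alpha(\sigma_M(\cdot),0))$ is \wpe\ and is therefore not itself an admissible (strongly pattern equivariant) representative in the sense of Corollary~\ref{cor-H1}, and the naive continuous lift $\beta$ along the orbit returns the non-integral linear cochain $\delta\beta$ rather than $\delta g$. To close this gap I would pass to a finite G\"ahler approximant $\mathcal{G}_n$, chosen large enough that $\pim^*(\alpha)$ is pulled back from $\mathcal{G}_n$, replace $\hat\alpha\circ\pim$ by the spe map through which its homotopy class factors, and compute its winding cell by cell. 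Traversing a $1$-cell of displacement $w$, this representative winds by the lattice vector $(\sigma(w),w)=\gamma_w\in\Gamma\cong H_1(\Omega_{max})$ --- it is precisely the requirement that loops of $\mathcal{G}_n$ close up in the torus that forces the internal component $\sigma(w)$ to enter, ruling out the orbit cochain $\delta\beta$ whose periods are not integral --- so the period is $\alpha(\gamma_w)=\delta g(e)$. This simultaneously explains why the correct integral winding is $\delta g$ and not $\delta\beta$ and completes the identification $\pim^*(\alpha)=[\delta\alpha(\sigma_M(\cdot),\cdot)]$.
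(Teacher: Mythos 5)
Your first two steps are sound and coincide with the paper's own argument: the identification of the eigenvalue attached to $\alpha\in\Gamma^{rec}$ via (\ref{eq-pistar}), and the verification that $g=\alpha(\sigma_M(\cdot),\cdot)$ has \spe\ coboundary with integer values $\delta g(e)=\alpha\bigl((x_2-x_1)^*,x_2-x_1\bigr)\in\Z$, are exactly the paper's observations. You also correctly isolate the crux: $\check f_\alpha$ is only \wpe, so it is not an admissible representative for Corollary \ref{cor-H1}. But your resolution of that crux has a genuine gap. The quantity you propose to compute --- the winding over a single $1$-cell of ``the'' \spe\ map through which the class of $\hat\alpha\circ\pim$ factors --- is not well defined: such a representative is determined only up to homotopy, no formula for it is available, and per-cell windings are not homotopy invariants (homotopic normalized representatives give cochains differing by coboundaries). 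So a cell-by-cell computation cannot single out the specific cochain $\delta g$ unless a specific representative is first constructed; asserting that the winding per cell is $\alpha(\gamma_w)$ presupposes precisely what must be proved. Worse, the mechanism you invoke --- ``loops of $\gcan_n$ close up in the torus'' --- fails at every finite level: $\pim$ does not factor through any G\"ahler approximant, there is no map $\gcan_n\to\Omega_{max}$, and a loop of edges in $\gcan_n$ traces in $(H\times\R^d)/\Gamma$ only a path whose endpoints differ by the small but nonzero element $[(\sigma(\textstyle\sum_i w_i),0)]_\Gamma$. Turning that into a proof requires a limiting argument in $n$, using that acceptance domains of large patches shrink, none of which appears in your outline (though your remark about the internal component being forced is the germ of that argument).

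The missing idea --- and the paper's actual route --- is to manufacture the concrete \spe\ representative whose existence you implicitly assume. Since $\sigma_M$ is \wpe\ with \spe\ coboundary, it extends to a \wpe\ function on all of $\R^d$ with \spe\ differential \cite{Kel}; because its image lies in the compact window, $\chi_t(x)=\exp\bigl(2\pi i\,\alpha(t\sigma_M(x),x)\bigr)$ is a well-defined homotopy (through \wpe\ maps) from the eigenvalue $\chi$ to $\chi_1=\exp\bigl(2\pi i\,\alpha(\sigma_M(\cdot),\cdot)\bigr)$. Your own second step shows $\chi_1$ is constant on $M$, and since its logarithmic differential is \spe, $\chi_1$ is \spe\ on all of $\R^d$. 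Corollary \ref{cor-H1} then applies verbatim with $\tilde\beta=\alpha(\sigma_M(\cdot),\cdot)$, whose lift is canonical up to an additive integer constant, and yields $\pim^*(\alpha)=[\delta g]$ with no approximant computation at all. In short, your proposal reduces the proposition to exactly the statement that needs proving --- the existence of an \spe\ representative in the class of $\hat\alpha\circ\pim$ whose associated cochain is $\delta g$ --- and the bounded-window homotopy is the step that actually produces it.
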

\begin{proof}
The eigenvalue $\chi\in\eee$ corresponding to $\alpha\in\Gamma^{rec}$ is  
$\chi(x) = \exp 2\pi i \alpha(0,x)$, $x\in\R^d$. Recall that
$\sigma_M:M\to H$ is \wpe\ and has \spe\ coboundary. We may therefore extend it to
a \wpe\ function on all of $\R^d$ whose differential is \spe\
\cite{Kel}.  We denote the extension also by $\sigma_M$.  
As  $\{\sigma_M(x):x\in M\}$ lies in a
compact subset of $H$, 
$\chi_t(x) = \exp 2\pi i \alpha(t\sigma_M(x),x)$
is a homotopy between $\chi$ and $\chi_1$.
Furthermore, if $x,y\in M$ then $(\sigma_M(y),y)-(\sigma_M(x),x)=
((y-x)^*,y-x)\in\Gamma$ and so $M\ni x \mapsto \exp 2\pi i
\alpha(\sigma_M(x),x))$ is constant and hence a \spe\ function on
$M$. It follows that $\chi_1$ is a \spe\ function on $\R^d$ and thus
we may apply Corollary \ref{cor-H1} to obtain the statement.
\end{proof}

\begin{prop}\label{prop-impi} 
Let $\mo$ be a model set with internal group $H=\R^n$.
Then $H^1_{repr}(\mo,\R)$ has dimension $n$ and
$$H^1_{max}(\mo,\R^d) =  H^1_{repr}(\mo,\R^d)\oplus H^1_{lin}(\mo,\R^d).$$
\end{prop}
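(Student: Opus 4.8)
The plan is to establish the dimension count and the splitting separately, using the explicit description of $\pim^*$ from the preceding proposition.

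\emph{Dimension of $H^1_{repr}(\mo,\R)$.} Since $C$ is pure torsion and $\R$ is torsion-free, every continuous homomorphism $L:H\to\R$ vanishes on $C$, so $Hom(H,\R)\cong(\R^n)^*$ has dimension $n$, and $H^1_{repr}(\mo,\R)$ is the image of $L\mapsto[\delta F_L]$, where $F_L=L\circ\sigma_M$. I would show this map is injective. If $[\delta F_L]=0$ then, the $CW$-complex of $\mo$ being connected, $F_L$ equals a \spe\ function plus a constant and is therefore itself \spe, hence takes finitely many values by FLC. But $\{\sigma_M(x):x\in M\}$ is dense in $W$, so $\{L(\sigma_M(x))\}$ is dense in $L(W)$; if $L\neq0$ on $\R^n$ then $L$ maps the nonempty open set $Int(W)$ onto a nondegenerate interval, forcing infinitely many values, a contradiction. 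Thus $L=0$ and $\dim H^1_{repr}(\mo,\R)=n$.

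\emph{The splitting.} Tensoring the preceding proposition with $\R$ and then with $\R^d$ identifies $H^1(\Omega_{max},\R^d)$ with $Hom(\R^n\times\R^d,\R^d)$ and realizes $\pim^*$ as $A\mapsto[\delta(A(\sigma_M(\cdot),\cdot))]$. Decompose $A=(A^\perp,A^\|)$ with $A^\perp:\R^n\to\R^d$ and $A^\|:\R^d\to\R^d$. Since $A$ is linear and trivial on $C$,
$$A(\sigma_M(x),x)=A^\perp(\sigma_M(x))+A^\|(x)=F_{A^\perp}(x)+A^\|(x),$$
so $\pim^*(A)=[\delta F_{A^\perp}]+[\delta(A^{\|}|_{\mo})]$ lies in $H^1_{repr}(\mo,\R^d)+H^1_{lin}(\mo,\R^d)$; this gives $H^1_{max}\subset H^1_{repr}+H^1_{lin}$. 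For the reverse inclusion, observe that $A^\perp$ and $A^\|$ range freely and independently over $Hom(\R^n,\R^d)$ and $Hom(\R^d,\R^d)$: choosing $A^\|=0$ realizes every class of $H^1_{repr}(\mo,\R^d)$ inside $H^1_{max}$, and choosing $A^\perp=0$ realizes every class of $H^1_{lin}(\mo,\R^d)$. Hence $H^1_{repr}+H^1_{lin}\subset H^1_{max}$ and the two coincide.

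\emph{Directness.} This is immediate from facts already established: each $F_L$ is \wpe, so $H^1_{repr}(\mo,\R^d)\subset H^1_{an}(\mo,\R^d)$, while $H^1_{lin}(\mo,\R^d)\cap H^1_{an}(\mo,\R^d)=\{0\}$ since nonzero linear maps are unbounded and \wpe\ generators are bounded. Therefore $H^1_{repr}\cap H^1_{lin}=\{0\}$ and $H^1_{max}(\mo,\R^d)=H^1_{repr}(\mo,\R^d)\oplus H^1_{lin}(\mo,\R^d)$. The only step requiring real work is the injectivity in the first part, namely that no nonzero reprojection generator is cohomologous to a \spe\ function; there the density of the star-map image in $W$ together with the finiteness of \spe\ values is decisive, whereas the remaining inclusions are bookkeeping with the formula for $\pim^*$.
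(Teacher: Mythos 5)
Your proposal is correct, and its central computation --- identifying $H^1(\Omega_{max},\R^d)$ with $Hom(\R^n\times\R^d,\R^d)$ via the preceding proposition and splitting $A=(A^\perp,A^\|)$ so that $\pim^*(A)=[\delta F_{A^\perp}]+[\delta (A^\||_\mo)]$ --- is exactly the paper's argument. Where you genuinely diverge is in how the two ``injectivity'' statements are obtained. The paper reads both the dimension count and the directness of the sum off the isomorphism $H^1_{max}(\mo,\R)\cong\Gamma^{rec}\otimes_\Z\R\cong({\R^n})^*\oplus({\R^d})^*$, which tacitly uses the injectivity of $\pim^*$ in cohomology, a fact quoted earlier in the paper from \cite{BKS}. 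You instead prove what you need by hand: (i) $L\mapsto[\delta F_L]$ is injective, because a null-cohomologous $F_L$ would differ from a \spe\ function by a constant (connectedness of the complex), hence be \spe\ and finite-valued by FLC, while density of $\sigma_M(M)$ in $W$ forces $L\circ\sigma_M$ to take infinitely many values whenever $L\neq 0$; and (ii) the sum is direct because $H^1_{repr}\subset H^1_{an}$ while $H^1_{lin}\cap H^1_{an}=\{0\}$ (bounded \wpe\ generators versus unbounded linear maps), both facts established earlier in the paper. Both arguments are sound, and your route is more self-contained: it never invokes the injectivity of $\pim^*$, and in fact it reproves that injectivity in degree one with real coefficients as a byproduct --- if $\pim^*(A)=0$ then $[\delta F_{A^\perp}]=-[\delta (A^\||_\mo)]\in H^1_{repr}\cap H^1_{lin}=\{0\}$, whence $A^\perp=0$ by (i) and $A^\|=0$ because a linear map bounded on the relatively dense set $\mo$ vanishes. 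The price is length; the paper's proof is four lines precisely because it is allowed to lean on the cited injectivity.
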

\begin{proof}
We have $H^1_{max}(\mo,\R)\cong\Gamma^{rec}\otimes_\Z\R\cong (\R^n\times\R^d)^*\cong
{\R^n}^*\oplus{\R^d}^*$.  Thus an element corresponding to $\alpha\in\Gamma^{rec}\otimes_\Z\R$ can be split into $(\alpha^\perp,\alpha^\|)\in {\R^n}^*\oplus{\R^d}^*$. Under this splitting the coboundary
$\delta \alpha(\sigma_M(\cdot),\cdot)$ becomes $(\delta \alpha^\perp\circ\sigma_M,\delta \alpha^\|)$.
With $\R^d$ coefficients this induces exactly the splitting $H^1_{max}(\mo,\R^d) =  H^1_{repr}(\mo,\R^d)\oplus H^1_{lin}(\mo,\R^d)$. This also shows that the dimension of $H^1_{repr}(\mo,\R)$ is $n$.
\end{proof}
With this proposition at hand we see that Theorem~\ref{MainTheorem2} is equivalent to
Theorem~\ref{MainTheorem3}. Indeed, it shows that for polyhedral model sets 
the statement $\dim H^1_{an}(\mo,\R)=n$ is equivalent to  
$H^1_{an}(\mo,\R) \subset H^1_{max}(\mo,\R)$.

\section{Nonslip sets and the Meyer property}\label{sec-Meyer}

In this section we explore a little further the concept of \ns\
generators.  Our hope is that this may turn out useful later for the
study of shape conjugations of Delone sets which are not model.  We
consider an analogous property of sets, and show that a generator of a
shape conjugation is \ns\ if an only its associated shape conjugacy
preserves that property.

\begin{dfn} A Delone set $\Lambda$ is {\em \ns} if for all $R>0$ one
 can find an $\epsilon>0$ such that for all $\Lambda_1,\Lambda_2\in\Omega_\Lambda$
 we have that if
  $\pim(\Lambda_1)=\pim(\Lambda_2)$ and $d(\Lambda_1,\Lambda_2)\leq
  \epsilon$, then both sets agree on $B_R(0)$.
\end{dfn}

\begin{lem}
  Every Meyer set is \ns. 
\end{lem}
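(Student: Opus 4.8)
The plan is to argue directly: given $R>0$ I will produce the required $\epsilon>0$, using the Meyer property in the form recalled above, namely that $0$ is not a limit point of $(\Lambda-\Lambda)-(\Lambda-\Lambda)$. Concretely, I first fix $\kappa>0$ so small that the only point of $(\Lambda-\Lambda)-(\Lambda-\Lambda)$ in the ball $B_\kappa(0)$ is $0$ itself. I then choose $\epsilon>0$ small enough that $d(\Lambda_1,\Lambda_2)\le\epsilon$ guarantees, via the metric's description for FLC sets, that $\Lambda_1$ and $\Lambda_2$ agree exactly on a ball $B_\rho(0)$ up to a translation $v$ with $|v|<\kappa$, with $\rho$ as large as I like (in particular $\rho\ge R$ and $\rho$ exceeding the covering radius of the hull plus $\kappa$). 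The whole point will be to show that the hypothesis $\pim(\Lambda_1)=\pim(\Lambda_2)$ forces $v=0$, so that the agreement on $B_\rho(0)\supset B_R(0)$ is genuine and not merely up to a shift.

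The key step uses the second bullet recalled after the maximal equicontinuous factor: since $\pim(\Lambda_1)=\pim(\Lambda_2)$ and $\Lambda$ is Meyer, the patterns $\Lambda_1$ and $\Lambda_2$ share a point $p$, and this shared point serves as an anchor linking the two patterns. I also pick a point $a\in\Lambda_2$ within the covering radius of the origin; since $a$ then lies well inside $B_\rho(0)$ and $\Lambda_1\cap B_\rho(0)=(\Lambda_2+v)\cap B_\rho(0)$, the translate $a+v$ lies in $\Lambda_1$. Now $p-a$ is a difference of two points of $\Lambda_2$ and $p-(a+v)$ is a difference of two points of $\Lambda_1$; because every interpoint vector of an element of the hull of a repetitive FLC set already occurs in $\Lambda-\Lambda$ (and $\Lambda-\Lambda$ is closed, being uniformly discrete by the Meyer hypothesis), both $p-a$ and $p-a-v$ lie in $\Lambda-\Lambda$. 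Subtracting, $v=(p-a)-(p-a-v)\in(\Lambda-\Lambda)-(\Lambda-\Lambda)$, and $|v|<\kappa$ forces $v=0$ by the choice of $\kappa$. Hence $\Lambda_1$ and $\Lambda_2$ agree exactly on $B_\rho(0)$, and in particular on $B_R(0)$, which is exactly what the definition of nonslip requires; note $\epsilon,\kappa,\rho$ are uniform over the hull, so the same $\epsilon$ works for all pairs.

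The main obstacle, and the heart of the argument, is the anchoring observation: the shared point $p$ is precisely what allows the small translation $v$ to be realized as a difference of two vectors from $\Lambda-\Lambda$, thereby pitting it against the Meyer property. Everything else is bookkeeping, though two points deserve care. First, I must invoke the FLC description of the metric correctly, so that closeness really means exact agreement on a large ball up to an arbitrarily small translation, and so that a failure to agree near the origin can only be caused by a nonzero $v$. Second, I use the standard fact that the set of interpoint vectors is a hull invariant contained in $\Lambda-\Lambda$; this is where repetitivity and FLC enter, and where the Meyer hypothesis guarantees that $\Lambda-\Lambda$ is closed, so that the inclusion is literal rather than merely up to closure. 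With these in hand the cancellation $v=(p-a)-(p-a-v)$ completes the proof.
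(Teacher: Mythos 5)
Your proof is correct and is essentially the paper's argument: both hinge on the recalled fact that $\pim(\Lambda_1)=\pim(\Lambda_2)$ forces the two patterns to share a point, which anchors the small discrepancy between them inside a set that the Meyer property makes discrete near $0$, so that closeness forces exact agreement. The only difference is packaging: the paper phrases this as uniform discreteness of $\Lambda_1-\Lambda_2$ together with $0\in\Lambda_1-\Lambda_2$, while you realize the metric's translation vector $v$ as an element of $(\Lambda-\Lambda)-(\Lambda-\Lambda)$ via the anchor point --- an equivalent formulation of the same mechanism, spelled out with more care about the FLC metric.
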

\begin{proof}
Recall that for Meyer sets $\pim(\Lambda_1)=\pim(\Lambda_2)$ implies that $0\in \Lambda_1-\Lambda_2$. But $\Lambda_1-\Lambda_2$
  is uniformly discrete by the Meyer property. Hence if $\Lambda_1$
  and $\Lambda_2$ are close enough they have to coincide on a ball of
  radius equal to the inverse of their distance.
\end{proof}

 Recall that a  generator of shape conjugacy of an FLC Delone set $\Lambda\subset\R^d$ is a 
 function $F:\Lambda\to\R^d$ such that $\Lambda\mapsto \Lambda^F = \{x+F(x):x\in\Lambda\}$ extends to an $\R^d$-equivariant homeomorphixm $\s_F:\Omega_\Lambda\to \Omega_{\Lambda^F}$. 
We saw that in this case $F$ extends to a continuous map $\tilde F:\Omega_\Lambda\to\R^d$ and so we may define
$ F_{\Lambda'}:\Lambda'\to\R^d$
by $F_{\Lambda'}(x) = \tilde F(\Lambda'-x)$. It is not difficult to see that $ F_{\Lambda'}$ is \wpe\ 
for ${\Lambda'}$ 
and that $\s_F(\Lambda') = {\Lambda'}^{F_{\Lambda'}} = \{x+F_{\Lambda'}(x): x\in {\Lambda'}\}$.
Indeed $\s_F(\Lambda') = \lim \s_F(\Lambda)-x_n$ for some sequence $(\Lambda-x_n)_n$ converging to $\Lambda'$, and 
$\s_F(\Lambda)-x_n = \{x+\tilde F(\Lambda - x):x\in\Lambda\}-x_n =  \{y+\tilde F(\Lambda -x_n-y):y\in\Lambda-x_n\}$. Now since $\tilde F$ is bounded and continuous we conclude that
$\lim \s_F(\Lambda)-x_n = \{y-\tilde F(\Lambda'-y):y\in\Lambda'\}$.

Nonslip generators of shape conjugacies and \ns\ sets are closely
related.

For a more general (not necessarily Meyer) Delone set we
generalize the concept of a \ns\ \wpe\ function as follows:
\begin{dfn}
Let $\Lambda$ be an FLC Delone set.
A  \wpe{} function
$F:\Lambda\to Y$ is {\em \ns\/} if there exists an $\epsilon>0$ such that for all $\Lambda_1,\Lambda_2\in\Rmax^\Xi$ we have
$\tilde F(\Lambda_1) = \tilde F(\Lambda_2)$ whenever $d(\Lambda_1,\Lambda_2)\leq \epsilon$.
\end{dfn}

Note that if $\Lambda$ is \ns, then the above definition reduces
to the definition we previously gave for Meyer sets. This
follows as in the proof of the last lemma form the fact  that
$\pim(\Lambda_1)=\pim(\Lambda_2)$ implies that $\Lambda_1$ and $\Lambda_2$
 agree on balls once they are close.
\begin{prop}
Let $\Lambda$ be \ns. $\Lambda^F$ is \ns\ iff $F$ is \ns.
\end{prop}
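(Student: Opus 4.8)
The plan is to run the conjugacy $\s_F:\Omega_\Lambda\to\Omega_{\Lambda^F}$ back and forth, using three standard facts about it: it is a topological conjugacy, hence uniformly continuous in both directions and (as recalled in Section~\ref{sec-define}) it preserves $\Rmax$; the function $\tilde F$ is continuous on the compact space $\Xi$, hence uniformly continuous and bounded, say $|\tilde F|\leq b$; and $\delta F$ is \spe, say with radius $R_0$, while $\Lambda^F$ is a genuine FLC Delone set and so is uniformly discrete, with distinct points separated by at least some $2r>0$. The one computational device I will use repeatedly is that, for a pattern $\Lambda'$ containing the origin, the point $0\in\Lambda'$ is carried by $\s_F$ to $\tilde F(\Lambda')\in\s_F(\Lambda')$, and that wherever two patterns $\Lambda_1,\Lambda_2$ agree on a ball of radius exceeding $R_0$, the \spe\ of $\delta F$ forces their images there to differ by a single global vector $c=F_{\Lambda_1}(x)-F_{\Lambda_2}(x)$, independent of the point $x$ in that ball.

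For the direction that $\Lambda^F$ \ns\ implies $F$ \ns, I would take a pair $(\Lambda_1,\Lambda_2)\in\Rmax^\Xi$, so that both contain the origin and lie in a common $\pim$-fibre, and chase it forward. Because $\s_F$ preserves $\Rmax$ and is uniformly continuous, $\Lambda_1^F:=\s_F(\Lambda_1)$ and $\Lambda_2^F:=\s_F(\Lambda_2)$ again lie in a common fibre and are as close as I like once $\Lambda_1,\Lambda_2$ are close. Applying the \ns\ property of the set $\Lambda^F$ with $R>b$ makes $\Lambda_1^F$ and $\Lambda_2^F$ agree on $B_R(0)$. Now $\tilde F(\Lambda_2)\in\Lambda_2^F\cap B_R(0)=\Lambda_1^F\cap B_R(0)$, so $\tilde F(\Lambda_2)\in\Lambda_1^F$; and $\tilde F(\Lambda_1)\in\Lambda_1^F$ as the image of the origin. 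These are two points of $\Lambda_1^F$ at distance $|\tilde F(\Lambda_1)-\tilde F(\Lambda_2)|$, which uniform continuity of $\tilde F$ makes smaller than $2r$ once $\Lambda_1,\Lambda_2$ are close enough. Uniform discreteness then forces $\tilde F(\Lambda_1)=\tilde F(\Lambda_2)$, which is exactly the \ns\ condition on $F$.

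For the converse, that $F$ \ns\ implies $\Lambda^F$ \ns, I would fix $R>0$ and pull a close, common-fibre pair $\Lambda_1^F,\Lambda_2^F\in\Omega_{\Lambda^F}$ back to $\Lambda_i=\s_F^{-1}(\Lambda_i^F)$, which are again in a common fibre and close. Here I use that $\Lambda$ is \ns: by choosing the pull-back close enough I can make $\Lambda_1$ and $\Lambda_2$ agree on a large ball $B_{R'}(0)$ with $R'=R+R_0+b$. By \spe\ of $\delta F$ the two images then differ on $B_R(0)$ by a single global vector $c$, and it remains to show $c=0$. I compute $c$ at a point $x_0\in\Lambda_1=\Lambda_2$ lying within the relative-denseness radius of the origin, where $c=\tilde F(\Lambda_1-x_0)-\tilde F(\Lambda_2-x_0)$. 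The translated pair $(\Lambda_1-x_0,\Lambda_2-x_0)$ lies in $\Rmax^\Xi$ — both contain the origin, and translation respects $\pim$ — and agrees on a large ball, hence is within the \ns\ radius of $F$; so the \ns\ property of $F$ gives $c=0$, whence $\Lambda_1^F=\Lambda_2^F$ on $B_R(0)$.

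The conceptual content is symmetric, but the two halves eliminate the ambiguity vector $c$ by different mechanisms: in the forward direction, uniform discreteness of $\Lambda^F$ together with continuity of $\tilde F$ (and, notably, the hypothesis that $\Lambda$ itself is \ns\ is not needed there); in the backward direction, the \ns\ hypothesis on $F$ applied after re-centering at a nearby lattice point. I expect the main obstacle to be purely bookkeeping — tracking how an agreement radius in $\Lambda$ translates, through the bound $b$ and the \spe\ radius $R_0$, into an agreement radius in $\Lambda^F$ and back, and matching these against the metric thresholds supplied by uniform continuity of $\s_F^{\pm1}$ and of $\tilde F$. None of these steps is deep, but they must be ordered so that each ``close enough'' is cashed out before it is invoked.
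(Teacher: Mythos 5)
Your proof is correct, and the forward direction ($\Lambda^F$ \ns\ implies $F$ \ns) is essentially the paper's: push the pair forward by $\s_F$ (which preserves $\Rmax$ and is uniformly continuous), invoke the \ns\ property of $\Lambda^F$ with $R>\|F\|$ to make the images agree on $B_R(0)$, then force $\tilde F(\Lambda_1)=\tilde F(\Lambda_2)$ by playing uniform continuity of $\tilde F$ against uniform discreteness of the deformed patterns. Your remark that the \ns\ hypothesis on $\Lambda$ is not needed in that direction is consistent with the paper, which invokes it there only to secure a common point of $\Lambda_1$ and $\Lambda_2$ --- automatic for pairs in $\Rmax^\Xi$. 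The backward direction is where you genuinely diverge, and to your advantage. The paper applies the \ns\ property of $F$ at every common point $x$ within the radius of relative denseness (via uniform continuity of translation in the metric) and then asserts $B_R[\s_F(\Lambda_1)]=B_R[\s_F(\Lambda_2)]$; read literally, this controls $F_{\Lambda_1}(x)-F_{\Lambda_2}(x)$ only for $|x|$ up to the relative-denseness radius, whereas the asserted conclusion needs it for all common $x$ with $|x|\leq R+\|F\|$. Your route --- pin the difference down to a single constant vector $c$ on the whole agreed region using the \spe\ coboundary $\delta F$, then kill $c$ by applying \ns\ of $F$ once, at the re-centered pair $(\Lambda_1-x_0,\Lambda_2-x_0)\in\Rmax^\Xi$ --- closes exactly this gap. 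The price is the extra ingredient that $\delta F$ is \spe, which is part of the paper's standing assumptions on generators of shape conjugacies but which the paper's own proof of this proposition never uses; the alternative repair, applying \ns\ of $F$ at all common points out to radius $R+\|F\|$ after demanding agreement on a correspondingly larger ball, stays closer to the paper's argument. Modulo the path-connectivity bookkeeping you flag for the propagation step, your argument is complete.
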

\begin{proof}
\noindent ``$\Rightarrow$'' We suppose that $\Lambda^F$ is \ns. Hence, given $R$
there exists $\delta$ such that for all $(\Lambda_1,\Lambda_2)\in
\Rmax$ we have $d(\s_F(\Lambda_1), \s_F(\Lambda_2))<\delta$ implies
$B_{R}[\s_F(\Lambda_1)] = B_{R}[\s_F(\Lambda_2)]$. Moreover, $\tilde F$ is
uniformly continuous so there exists $\epsilon_1$ such that for all $\Lambda_1,\Lambda_2\in
\Xi$ we have $d(\Lambda_1,\Lambda_2)<\epsilon_1$ implies
$\|\tilde F(\Lambda_1) - \tilde F(\Lambda_2)\|<R^{-1}$. Furthermore, $\s_F$
is uniformly continuous so  there exists $\epsilon_2$ such that for
all $\Lambda_1,\Lambda_2\in
\Omega$ we have $d(\Lambda_1,\Lambda_2)<\epsilon_2$ implies
$d(\s_F(\Lambda_1), \s_F(\Lambda_2))<\delta$. Finally, $\Lambda$ is \ns\
so
there exists $\epsilon_3$ such that for all $(\Lambda_1,\Lambda_2)\in
\Rmax$ we have $d(\Lambda_1,\Lambda_2)<\epsilon_3$ implies
$B_{1}[\Lambda_1] = B_{1}[\Lambda_2]$. Let $\epsilon =
\min\{\epsilon_1,\epsilon_2,\epsilon_3,\}$ (which depends on
$R$). Then for all $(\Lambda_1,\Lambda_2)\in
\Rmax^\Xi$ we have $d(\Lambda_1,\Lambda_2)<\epsilon$ implies that
$0\in \Lambda_1\cap\Lambda_2$, $B_{R}[\s_F(\Lambda_1)] =
B_{R}[\s_F(\Lambda_2)]$ and $\|F_{\Lambda_1}(0) - F_{\Lambda_2}(0)\|<R^{-1}$.
Since $\s_F(\Lambda_i) = \{ x + F_{\Lambda_i}(x) : x\in\Lambda_i\}$ we
see that, if $R$ is large enough, this implies $F_{\Lambda_1}(0) = F_{\Lambda_2}(0)$.

\medskip

\noindent ``$\Leftarrow$'' We suppose that $F$ is \ns, hence there exists $\delta$ such that
for all $(\Lambda_1,\Lambda_2)\in\Rmax^\Xi$ and all $x\in\Lambda_1\cap\Lambda_2$ we have that $d(\Lambda_1-x,\Lambda_2-x)<\delta$ implies $F_{\Lambda_1}(x) = F_{\Lambda_2}(x)$.
By definition of the metric there exists $\epsilon_1$ such that if $d(\Lambda_1,\Lambda_2)<\epsilon_1$ then  $d(\Lambda_1-x,\Lambda_2-x)<\delta$ for all $x$ of size smaller or equal to the radius of relative denseness.
Let $R>0$ and $\|F\|=\sup_{\Lambda'\in\Omega} \|\tilde F(\Lambda')\|$ which is finite, by the continuity of $\tilde F$. 
Since $\Lambda$ is \ns\ there exists $\epsilon_2$ such that
for all $(\Lambda_1,\Lambda_2)\in\Rmax$ we have $d(\Lambda_1,\Lambda_2)<\epsilon_2$ implies
$B_{R+\|F\|}[\Lambda_1] = B_{R+\|F\|}[\Lambda_2]$. Let $\epsilon = \min\{\epsilon_1,\epsilon_2\}$.
Then, if $(\Lambda_1,\Lambda_2)\in\Rmax$ and $d(\Lambda_1,\Lambda_2)<\epsilon$ 
we have $B_{R+\|F\|}[\Lambda_1] = B_{R+\|F\|}[\Lambda_2]$ and 
 $F_{\Lambda_1}(x) =
F_{\Lambda_2}(x)$ for all $x\in \Lambda_1\cap \Lambda_2$ of size smaller or equal to the radius of relative denseness. It follows that $B_{R}[\s_F(\Lambda_1)] = B_{R}[\s_F(\Lambda_2)]$. 
Thus $\Lambda^F$ is \ns.
\end{proof}

The following corollary is just a special case:
\begin{cor}\label{cor-ns}
Let $\Lambda$ be a Meyer set and $F$ be a generator of a shape conjugacy.
If $\Lambda^F$  is a Meyer set then $F$ must be \ns.
\end{cor}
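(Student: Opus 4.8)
The plan is to obtain this as an immediate consequence of the preceding Proposition, combined with the Lemma that every Meyer set is \ns{}. Indeed the corollary is precisely the special case of that Proposition in which both $\Lambda$ and its deformation $\Lambda^F$ happen to be Meyer, so the work has essentially been done and only assembly remains.

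First I would note that, since $\Lambda$ is a Meyer set, the Lemma asserting that every Meyer set is \ns{} shows that $\Lambda$ is itself a \ns{} \emph{set}. This places us squarely in the hypothesis of the Proposition, so that its biconditional is available; moreover, by the remark following the general definition, the notion of a \ns{} \emph{function} $F$ coincides here with the original notion stated for Meyer sets. Next, since $\Lambda^F$ is assumed to be a Meyer set, the same Lemma applied to $\Lambda^F$ shows that $\Lambda^F$ is a \ns{} set as well.

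It then remains only to invoke the forward direction of the Proposition: because $\Lambda$ is \ns{}, $\Lambda^F$ being \ns{} forces $F$ to be \ns{}, which is exactly the assertion of the corollary. There is no genuine obstacle in this argument; the single point deserving care is the verification that the \ns{}-function notion used in the Proposition, stated for an arbitrary FLC Delone set, agrees with the Meyer-set notion intended in the corollary. This is handled by the remark recording that the two definitions coincide whenever $\Lambda$ is \ns{}, after which the deduction reduces to the short chain: Meyer $\Rightarrow$ \ns{} set for both $\Lambda$ and $\Lambda^F$, then \ns{} set $\Rightarrow$ \ns{} generator $F$ via the Proposition.
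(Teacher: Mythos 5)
Your proposal is correct and is exactly the paper's argument: the paper dismisses the corollary as ``just a special case'' of the preceding Proposition, and the assembly you describe --- Meyer $\Rightarrow$ \ns{} set for both $\Lambda$ and $\Lambda^F$ via the Lemma, then the forward direction of the Proposition to conclude $F$ is \ns{} --- is precisely what that remark leaves implicit. Your extra care in checking that the general \ns{}-function definition agrees with the Meyer-set version (via the remark following the general definition) matches the paper's own observation and is the right point to verify.
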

The contrapositive says that if $F$ is not \ns, then $\Lambda^F$ is not
Meyer. That is 
 Theorem \ref{MainTheorem4}.

Applying these observations to model sets we obtain:
\begin{cor}\label{cor-last}
Given a model set $\mo$ which satisfies H1.
If $H^1_{ns}(\mo,\R) \subset H^1_{max}(\mo,\R)$
then any shape conjugation of $\mo$ which is a Meyer set is a reprojection of $\mo$.
\end{cor}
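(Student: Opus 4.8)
The plan is to bypass Theorem~\ref{mainlemma} (which needs the stronger hypothesis H$2'$) and instead read the reprojection property directly off the cohomological inclusion, using the transversality $H^1_{lin}\cap H^1_{an}=\{0\}$ to annihilate any linear part. The argument is short: apart from this transversality step it is an assembly of results already established.

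First I would let $F:\mo\to\R^d$ be the generator of the given shape conjugation, so that $\mo^F$ is the shape-conjugate set and $\delta F$ represents a class in $H^1_{an}(\mo,\R^d)$. As $\mo$ is a model set it is Meyer, and $\mo^F$ is Meyer by hypothesis, so Corollary~\ref{cor-ns} shows that $F$ is \ns; hence $[\delta F]\in H^1_{ns}(\mo,\R^d)$. Tensoring the assumption with $\R^d$, that is using $H^1_{ns}(\mo,\R^d)=H^1_{ns}(\mo,\R)\otimes\R^d$ together with the corresponding identity for $H^1_{max}$, the hypothesis $H^1_{ns}(\mo,\R)\subset H^1_{max}(\mo,\R)$ upgrades to $[\delta F]\in H^1_{max}(\mo,\R^d)$.

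The crux is the next step. By Proposition~\ref{prop-impi} there is a direct sum decomposition $H^1_{max}(\mo,\R^d)=H^1_{repr}(\mo,\R^d)\oplus H^1_{lin}(\mo,\R^d)$, so I would write $[\delta F]=a+b$ with $a\in H^1_{repr}$ and $b\in H^1_{lin}$. Since $H^1_{repr}\subset H^1_{ns}\subset H^1_{an}$, both $[\delta F]$ and $a$ lie in $H^1_{an}$, and therefore so does $b=[\delta F]-a$. But a nonzero class in $H^1_{lin}$ is represented by a nonzero linear map, which is unbounded, whereas every asymptotically negligible class is represented by a bounded (\wpe) generator; hence $H^1_{lin}\cap H^1_{an}=\{0\}$, forcing $b=0$. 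Thus $[\delta F]=a\in H^1_{repr}(\mo,\R^d)$.

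Finally I would translate this back to geometry, following the end of the proof of Theorem~\ref{NewMainTheorem}. By Corollary~\ref{cor-repr} the class $a$ is represented by $\delta F_L$ with $F_L(x)=L(\sigma_{\mo}(x))$ for a continuous homomorphism $L:H\to\R^d$, and $F_L$ generates the reprojection of $\mo$ along $H'=\{(h,-L(h)):h\in H\}$. The equality $[\delta F]=[\delta F_L]$ means $\delta F-\delta F_L=\delta\psi$ for a \spe\ function $\psi$, and since the complex defined by $\mo$ is connected this gives $F=F_L+\psi+c$ for a constant $c$. Hence $\mo^F$ is obtained from the reprojection $\mo^{F_L}$ by the \spe\ deformation $\psi$ and the translation by $c$, both MLD transformations, so $\mo^F$ is MLD to a reprojection of $\mo$ and in particular a model set. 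The only point that leans on the earlier machinery rather than on the bookkeeping above is the assertion that a \spe\ correction induces an MLD transformation; this is exactly the fact used in the proof of Theorem~\ref{NewMainTheorem}, and is where I expect the only real care to be needed.
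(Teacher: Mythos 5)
Your proof is correct and is essentially the paper's own argument: the paper likewise applies Corollary~\ref{cor-ns} to conclude that $F$ is \ns{} and then invokes Proposition~\ref{prop-impi}, whose role there is precisely the decomposition $H^1_{max}(\mo,\R^d)=H^1_{repr}(\mo,\R^d)\oplus H^1_{lin}(\mo,\R^d)$ combined with the transversality $H^1_{lin}\cap H^1_{an}=\{0\}$ that you spell out to kill the linear part. The additional detail you supply — working with the specific class $[\delta F]$, and translating $[\delta F]\in H^1_{repr}$ back into $F=F_L+\psi+c$ so that $\mo^F$ is MLD to a reprojection — is exactly what the paper leaves implicit (its proof, like that of Theorem~\ref{NewMainTheorem}, treats the \spe{} correction as an MLD transformation), so no genuinely different route is taken.
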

\begin{proof}
Let $F$ be the generator of a shape conjugation of $\mo$ such that $\mo^F$ is a Meyer set.
By Corollary \ref{cor-ns}, $F$ must be \ns.
By Proposition \ref{prop-impi}, $H^1_{ns}(\mo,\R) \subset H^1_{max}(\mo,\R)$ 
is equivalent to the equality
$H^1_{ns}(\mo,\R) = H^1_{repr}(\mo,\R)$. Hence $ \mo^F$ is a reprojection.
\end{proof}


\section{A model set that is not \brillig}\label{examples}

Theorem \ref{NewMainTheorem} states that most common examples of model sets,
constructed by direct application of the cut \& project method with polyhedral windows, are \brillig.  
In this example we exhibit a model set with a Euclidean internal group (so satisfying H1) that is
not. The example is constructed by a substitution;  model sets arising from substitutions may have very complicated (fractal) windows.

Consider the 1-dimensional substitution $\sigma$ on four letters, a sort of doubling of the Fibonacci substitution: 
\begin{eqnarray*} \sigma(a_1) & = &  a_1b_1a_2 \\
\sigma(b_1) & = &  a_1b_2 \\
\sigma(a_2) & = &  a_1b_2a_2 \\ 
\sigma(b_2) & =  & a_2b_1.
\end{eqnarray*}
Its substitution matrix is
$$\sm = \begin{pmatrix} 1&1&1&0 \cr 1&0&0&1 \cr 1&0&1&1 \cr 0&1&1&0 \end{pmatrix}$$
and has eigenvalues $\phi^2$, $-\phi$, $\phi^{-1}$ and $\phi^{-2}$
where $\phi = (1+\sqrt{5})/2$ is the golden mean.  We choose tile
length proportional to the right Perron Frobenius eigenvalue, namely
$b_1$ and $b_2$ tiles to have unit length, and $a_1$ and $a_2$ tiles
to have length equal to $\phi =(1+\sqrt{5})/2$.  This is then a
geometric primitive aperiodic unimodular Pisot substitution (which is
not irreducible).
By the results of \cite{BSW} the dynamical spectrum is pure point,
indeed the balanced pair $(a_1b_1,b_1a_1)$ terminates with coincidence
(see \cite{BSW} for explanations on this notion).  A lot is known
about such substitution tilings. The following can be found more or less implicit in, for instance,
\cite{BM,BBK,Sing}.
\begin{itemize}
\item Since the spectrum is pure point, the tilings are MLD to
  (possibly colored) regular model sets. 
 Moreover,  
  the set $M$ of left boundary points of $a_1$-tiles in a tiling is MLD
  to the tiling and hence also a regular model set.
\item Since the substitution is unimodular, the maximal
  equicontinuous factor $\Omega_{max}$ of the associated dynamical system
  is a torus of dimension $J$ where $J$ is the algebraic
degree of the Perron Frobenius eigenvalue.
This eigenvalue is here $\phi^2$ and hence $J=2$.  
This, in turn implies that $M$ has a
  cut \& project scheme in which the internal group $H$ is $\R$. 
\end{itemize}
One readily computes, e.g., using the technique of \cite{AP}, that 
$H^1(\Omega_M, \R) = \R^4$ and that substitution acts on $H^1(\Omega_M,\R)$ by
the transpose $\sm^T$ of the substitution matrix.  For substitution tilings,
$H^1_{an}(\Omega_M, \R)$ is the span of all of the generalized
eigenspaces of this action
with eigenvalues strictly inside the unit circle \cite{CS2}. In our case
this means that $H^1_{an}(\Omega_M, \R)=\R^2$ is the span of the
$\phi^{-1}$ and $\phi^{-2}$ eigenvectors of $\sm^T$. 

The generator of a shape conjugacy 
corresponding to the $\phi^{-2}$ eigenvector induces a reprojection and hence is
\ns. It corresponds to a shape conjugacy in which all of the $a$
tiles are lengthened and the $b$ tiles are shortened (or vice-versa),
while maintaining $|a_1|=|a_2|$ and $|b_1|=|b_2|$ and preserving the quantity 
$|a_1|\phi + |b_1| = \phi^2+1$.

The generator $F$ of a shape conjugacy corresponding to the $\phi^{-1}$ eigenvector is
not \ns, and results in a Delone set $\mo^F$ that is not Meyer.  To
see this, let 
$A_i^n = \sigma^n(a_i)$ and $B_i^n = \sigma^n(b_i)$
be $n$-th order supertiles, and let $|A_i|^F$ and $|B_i|^F$ be the
Euclidean lengths of these supertiles after deformation. This time
$|A_1|^F\neq |A_2|^F$; in fact, $|A_1^n|^F-|A_2^n|^F$ is
proportional to $\phi^{-n}$. 
We have $|A_1^n|^F \in \mo^F-\mo^F$, since
$A_1^nB_1^n$ appears in our tiling and $A_1^n$ and $B_1^n$ start with $a_1$.
Likewise, $|A_2^n|^F \in \mo^F-\mo^F$, since $A_2^nB_1^n$ appears in the tiling and also $A_2^n$ starts with $a_1$. However, since 
$|A_1^n|^F-|A_2^n|^F$ is proportional to $\phi^{-n}$ the set of differences
$\mo^F-\mo^F$ cannot be uniformly discrete, so $\mo^F$ is not
Meyer.

Moreover, we claim that $F$ is not \ns. To see this, note that erasing the subscripts
of all the tiles gives a factor map to the Fibonacci tiling space. 
Furthermore, the maximal equicontinuous factor map of the doubled Fibonacci tiling factors through the above factor map. Hence if $\Lambda_{1},\Lambda_{2}$ 
map to the same Fibonacci tiling, then $\pi_{max}(\Lambda_1)=\pi_{max}(\Lambda_2)$.
As they are model sets this implies that 
$\Lambda_1$ and $\Lambda_2$ will agree exactly on 
arbitrarily large regions \cite{BargeKellendonk}.
If $F$ were \ns, then $F$ would have to take on the same values on
corresponding regions of $\Lambda_1$ and $\Lambda_2$. It follows that for all sufficiently large regions of 
$\Lambda_{1,2}$ that agree exactly, the corresponding regions of  
$\Lambda^F_1$ and $\Lambda^F_2$ must also agree exactly.

However, we will exhibit such a pair $\Lambda_{1,2}$, in which there are arbitrarily large regions where $\Lambda_1$ and 
$\Lambda_2$ agree but $\Lambda_1^F$ and $\Lambda_2^F$ do not. 
As a step to constructing $\Lambda_{1,2}$, 
note that $B_1^2 = a_1 b_1 a_2a_2 b_1$ and $B_2^2=a_1b_2a_2a_1b_2$ both begin with
$a_1$ and then differ, with $B_1^2$ containing a $b_1$ where $A_2^2$ contains a $b_2$, after which both words continue
with $a_2$. Applying the substitution $2n$ times, placing the origin where $B_1^{2n}$ and $B_2^{2n}$  start to differ, and then taking a limit as $n \to \infty$, we obtain two 
different (colored) point patterns $\Lambda_1$ and $\Lambda_2$ that agree exactly on the left half-line $(-\infty,0)$ 
but do not agree on the tile immediately to the right of the origin. 
In  $\Lambda_1$, for each $n$ the origin lies inside a $B_1^{2n}$ that is followed by 
an $A_2^{2n}$, while in $\Lambda_2$, the origin lies in a $B_2^{2n}$ followed by an $A_2^{2n}$. Prior to 
the deformation we have  $|B_1^{2n}| = 
|B_2^{2n}|$, so $\Lambda_1$ and $\Lambda_2$ agree on arbitrarily large regions of the right half-line (as well as on all of the 
left half-line). Moreover, $\Lambda_1$ and $\Lambda_2$ both correspond to the same Fibonacci tiling, so 
$\pi_{max}(\Lambda_1)=\pi_{max}(\Lambda_2)$. 

Now consider $\Lambda^F_1$ versus $\Lambda^F_2$. The deformation makes $|B_1^{2n}|^F$ different from  
$|B_2^{2n}|^F$, so either $\Lambda_1^F$ and $\Lambda_2^F$  disagree on the left half-line, or they disagree on the 
$A_2^{2n}$ supertile following the $B_i^{2n}$ that contain the origin. Either way, the deformation must shift arbitrarily large
corresponding regions relative to one another, and so cannot be \ns.
  
The key feature of this example is that the substitution
matrix is reducible. The maximal equicontinuous factor is determined
by the dynamical spectrum, which for self-similar tile lengths is
determined by the Perron-Frobenius eigenvalue $\lambda_{PF}$. If
$\lambda_{PF}$ is a Pisot number, then a basis for the 
generators of reprojections is given by the eigenvectors with
eigenvalues algebraically conjugate to $\lambda_{PF}$. 
However, a
basis for $H^1_{an}$ is given by {\em all\/} the eigenvectors with
eigenvalue strictly smaller than 1. The eigenvectors whose small
eigenvalues are not conjugate to $\lambda_{PF}$ correspond to shape 
conjugacies that are not reprojections.

In general, whenever a 1-dimensional 
substitution has a small eigenvalue that is not conjugate to 
$\Lambda_{PF}$, the shape conjugacy corresponding to 
that small eigenvalue will not be a reprojection, and will
destroy the Meyer property. We expect the generators of these
shape conjugacies to not be \ns, as in our example, 
but this has not been proven in general.
 
\medskip

{\bf Acknowledgments}: We thank Marcy Barge for showing us the doubled
Fibonacci substitution that we employed in Section \ref{examples}.
The work of the first author is partially supported by the ANR SubTile
NT09 564112.  The work of the second author is partially supported by
NSF grant DMS-1101326.

\medskip

\end{document}